\titlespacing*{\section}{0pt}{1.1\baselineskip}{\baselineskip}
\titlespacing*{\subsection}{0pt}{1.1\baselineskip}{\baselineskip}
\newcommand{\gismo}{{\fontfamily{phv}\fontshape{sc}\selectfont G\pmb{+}Smo}\xspace}
\begin{document}

\title*{Multipatch Discontinuous Galerkin Isogeometric Analysis}
\author{Ulrich Langer,  Angelos Mantzaflaris, Stephen E. Moore  
and Ioannis Toulopoulos}
\institute{Ulrich Langer, Angelos Mantzaflaris, Stephen E. Moore,  
Ioannis Toulopoulos \at RICAM, Altenbergerstr. 69, A-4040 Linz, Austria, 
\email{(ulrich.langer, angelos.mantzaflaris, stephen.moore, ioannis.toulopoulos)@ricam.oeaw.ac.at}
}

\maketitle

\abstract{Isogeometric analysis (IgA) uses the same class of basis functions for both, 
representing the geometry of the computational domain and approximating the 
solution. In practical applications, geometrical patches are used in order to 
get flexibility in the geometrical representation. 
This multi-patch representation corresponds to 
a decomposition of the computational domain into non-overlapping subdomains 
also called patches in the geometrical framework.
We will present  discontinuous Galerkin (dG) methods that allow for 
discontinuities across the subdomain (patch) boundaries. The required 
interface conditions are weakly imposed 
by the dG terms associated with the boundary of the 
sub-domains. The construction and the corresponding discretization error analysis of 
such dG multi-patch IgA schemes will be given for heterogeneous diffusion model problems 
in volumetric 2d and 3d domains as well as on open and closed surfaces.
The theoretical results are confirmed by numerous 
numerical experiments which have been performed in \gismo.
The concept and the main features of the IgA library  \gismo
are also described.
}

\section{Indroduction}
\label{LMMT_sec:1:Indroduction}
%
The Isogeometric analysis (IgA), which was introduced  by Hughes, Cottrell and Bazilevs in 2005, 
is a new discretization 
technology
which uses the same class of basis functions for both, 
representing the geometry of the computational domain and 
approximating the solution of problems modeled by 
Partial Differential Equations (PDEs), 
\cite{LMMT_HughesCottrellBazilevs:2005a}. 
IgA uses the exact geometry in the class of Computer Aided Design (CAD) geometries, 
and thus geometrical errors 
introduced by approximation of the physical domain are eliminated.
This feature is especially important in technical applications
where the CAD geometry description is directly used in the production process.
Usually, IgA uses basis functions like B-Splines and Non-Uniform Rational B-Splines (NURBS), 
which are standard in CAD, 
and have several advantages which make them suitable for 
efficient and accurate simulation,
see \cite{LMMT_CottrellHughesBazilevs:2009a}. 
The mathematical analysis of  approximation properties, 
stability and discretization error estimates 
of NURBS spaces have been well studied in 
\cite{LMMT_BazilevsBeiraoCottrellHughesSangalli:2006a}. 
Furthermore, 
approximation error estimates 
due to mesh, polynomial degree and smoothness refinement
have been obtained 
in \cite{LMMT_BeiraoBuffaRivasSangalli:2010a}, 
and 
likewise a hybrid method that combines a globally $C^{1}-$continuous, 
piecewise polynomial finite element basis with rational NURBS-mappings 
have also been considered in 
\cite{LMMT_KleissJuettlerZulehner:2012a}.

In practical applications, the computational domain $\Omega \subset \mathbb{R}^d$ ($d=2,3$)
is usually represented by multiple patches leading to non-matching meshes 
and thus to patch-wise non-conforming approximation spaces. 
In order to handle non-matching meshes and polynomial degrees across the 
patch interfaces, the discontinuous Galerkin (dG) technique 
that is now well established in the Finite Element Analysis (FEA) of 
different field problems is employed. 
Indeed, dG methods have been developed and analyzed for many applications including elliptic, 
parabolic and hyperbolic PDEs. 
The standard dG finite element methods use approximations 
that are discontinuous across the boundaries of every finite element of the triangulation. 
To achieve consistent, stable and accurate schemes, 
some conditions are prescribed on the inter-element boundaries, 
see, e.g., Rivi\`ere's monograph \cite{LMMT_Riviere:2008a}.

In this paper, we present and analyze multipatch dG IgA methods 
for solving elliptic PDEs in volumetric 2d or 3d computational domains 
as well as on open and closed surfaces. 
As model problems, we first consider  diffusion problems of the form
\begin{equation}
\label{LMMT_sec:1:eqn:Model1}
   -\nabla  \cdot (\alpha\nabla u) = {f}{\ }  \text{in}{\ } \Omega, {\ }
    \text{and}{\ }{u}     = {u}_D {\ }\text{on $\partial \Omega $}, 
\end{equation}
where $\Omega$ is a bounded Lipschitz domain in $\mathbb{R}^d,{\ }d=2,3$,
with the boundary $\partial \Omega$,
$f$ is a given source term, and $ u_D$ are given Dirichlet data.
Neumann, Robin and mixed boundary conditions can easily be 
treated in the same dG IgA framwork which we are going to analyze in this paper.
The same is true for including a reaction term into the PDE  (\ref{LMMT_sec:1:eqn:Model1}).
The diffusion coefficient $ \alpha $ is assumed to be bounded from above and below by 
strictly positive constants. We allow $ \alpha $ to be discontinuous 
across the inter-patch boundaries with possible large jumps across these interfaces. 
More precisely, 
the computational domain $\Omega$ is subdivided into
a  collection $\mathcal{T}_H(\Omega) := \{\Omega_i\}_{i=1}^N$
of non-overlapping subdomains (patches) $\Omega_1,\ldots,\Omega_N$,
where the patches $\Omega_i$ are obtained by some NURBS mapping $\Phi_i$ 
from the parameter domain 
$\widehat{\Omega}= (0,1)^d \subset \mathbb{R}^d$.
In every subdomain, the problem is discretized under the IgA methodology.
The dG IgA method is considered in a general case without  imposing any 
matching grid conditions and without any continuity requirements for the discrete solution 
across the interfaces $F_{ij}=\partial \Omega_i\cap \partial\Omega_j$. 
Thus, on the interfaces,  the dG technique
ensures
the communication of the solution between the adjacent subdomains.
For simplicity, we assume  that the coefficients are piecewise constant
with respect to the decomposition  $\mathcal{T}_H(\Omega_i)_{i=1}^N$.
It is well known that solutions of elliptic problems with discontinuous coefficients are
generally not smooth 
\cite{LMMT_Kellog:1975a, LMMT_Grisvard:2011a},  
Thus, we cannot expect that the dG IgA provides  
convergence rates like in the smooth case.
Here, the reduced regularity of the solution is of primary concern.
Following the techniques developed by Di Pietro and Ern
\cite{LMMT_DiPietroErn:2012a,LMMT_DiPietroErn:2012b} 
for dG FEA,
we present an error analysis for diffusion problems 
in 2d and 3d computational domains with solutions belonging to  
$W^{l,p}(\Omega_i),\, l\geq 2,{\ } p\in (\max\{1,{2d}/{(d+2(l-1))}\},2],\, i=1,\ldots,N$.        
In correspondence with this regularity, we show optimal convergence rates of the discretization 
error in the  classical ``broken'' dG - norm. 
Due to the peculiarities of the dG IgA, the proofs are quite technical and can be 
found in the paper \cite{LMMT_LangerToulopoulos:2014a}
written by two of the co-authors.
Solutions of elliptic problems with low regularity can also appear in other cases.
For example, geometric singular boundary points (re-entrant corners),
points with change of the boundary conditions, and singular source terms
can cause low-regularity solutions as well, see, e.g., \cite{LMMT_Grisvard:2011a}. 
Especially, in the IgA, we want to use the potential of 
the approximation properties of NURBS based on high-order polynomials.
In this connection, the error estimates obtained for the low-regularity case
can be very useful. Indeed, via mesh grading, that has been used in the FEA 
for a long time 
\cite{LMMT_OganesjanRuchovetz:1979a,LMMT_ApelMilde1996},
we can recover the 
full convergence rate corresponding to the underlying polynomial degree.
We mention that in the literature other techniques have been proposed 
for solving two dimensional problems with  singularities.  For example, in 
\cite{LMMT_OhKimJeong:2013a} and \cite{LMMT_JeongOhKangKim:2013a}, 
the original  B-Spline finite dimensional space has been  enriched by  generating singular  
functions which resemble the types of the singularities of the problem. 
Also in \cite{LMMT_SangalliaNurbs2012}, by studying the anisotropic character of the 
singularities of the problem, the one dimensional approximation properties of
the B-Splines are generalized in order to produce anisotropic refined meshes in the regions of the singular
points.
%
Our error analysis is accompanied by a series of 
numerical tests which fully confirm our convergence
rate estimates for the multipatch dG IgA in both the full and the low regularity 
cases as well as the recovering of the full rate by means of mesh grading techniques.
It is very clear that our analysis can easily be  generalized to more general classes 
of elliptic boundary value problems like plane strain or stress linear 
elasticity problems or linear elasticity problems in 3d volumetric 
computational domains.

We will also consider multipatch dG IgA of diffusion problems 
of the form (\ref{LMMT_sec:1:eqn:Model1})
on sufficiently smooth, open and closed surfaces $\Omega$ in $\mathbb{R}^3$,
where the gradient $\nabla$ must now be replaced by 
the surface gradient $\nabla_\Omega$,
see, e.g., \cite{LMMT_DziukElliott:2013a},
and the patches $\Omega_i$, into which $\Omega$ is decomposed,
are now images of the parameter domain 
$\widehat{\Omega}= (0,1)^2$ by the mapping
$\Phi_i: \widehat{\Omega} \subset \mathbb{R}^2 
         \rightarrow \Omega_i \subset \mathbb{R}^3$. 
The case of matching meshes was considered and analyzed 
in \cite{LMMT_LangerMoore:2014a} by two of the co-authors,
see also \cite{LMMT_ZhangXuChen2014a} for a similar work.
It is clear that the results for non-matching spaces 
and for mesh grading  presented here for the volumetric (2d and 3d) case can 
be generalized to  diffusion problems on open and closed surfaces.
Until now, the most popular numerical method 
for solving PDEs on surfaces is the surface Finite Element Method (FEM).
The surface FEM 
was first
applied to compute 
approximate solutions of the Laplace-Beltrami problem,
where the finite element solution is constructed from the 
variational formulation of the surface PDEs 
in the finite element space that is living on the triangulated surface 
approximating the real surface \cite{LMMT_Dziuk:1988a}.
This method has been extended to the parabolic equations 
fixed surfaces by Dziuk and Elliot \cite{LMMT_DziukElliott:2007b}. 
To treat conservation laws on moving surfaces, 
Dziuk and Elliot proposed the evolving surface finite element methods,
see \cite{LMMT_DziukElliott:2007a}. 
Recently, Dziuk and Elliot  have published the survey paper 
\cite{LMMT_DziukElliott:2013a}
which provides a comprehensive presentation of different finite element approaches 
to the solution of PDEs on surfaces with several applications.
The first dG finite element scheme, which extends Dziuk's approach, 
was proposed by Dedner et al. \cite{LMMT_DednerMadhavanStinner:2013a} 
and has very recently been extended to adaptive dG surface FEM
\cite{LMMT_DednerMadhavan:2014a} 
and high-order dG finite element approximations on surfaces 
 \cite{LMMT_AntoniettiDednerMadhavanStangalioStinnerVeran:2014a}.
%
%
However, since all these approaches rely on the triangulated surface, 
they have an inherent geometrical error which becomes more complicated 
when approximating problems with complicated geometries.
This drawback of the surface FEM can be overcome by the IgA technology,
at least, in the class of CAD surfaces which can  be represented exactly 
by splines or NURBS.
Dede and Quateroni have introduced the surface IgA for fixed surfaces 
which can be represented by one patch \cite{LMMT_DedeQuarteroni:2012a}.
They presented convincing numerical results for several PDE problems 
on open and closed surfaces. However, in many practical applications,
it is not possible to represent the surface $\Omega$ with one patch.
The surface multipatch dG IgA that allows us to use patch-wise different 
approximations spaces on non-matching meshes is then the natural choice.

The new paradigm of IgA brings challenges regarding the
implementation. Even though the computational domain is partitioned
into subdomains (patches) and elements (parts of the domain delimited
by images of knot-lines or knot-planes), the information that is
accessible by the data structure (parametric B-spline patches) is
quite different than that of a classical finite element mesh (soup of
triangles or simplices).  In this realm, existing finite element
software libraries cannot easily be adapted  to the isogeometric
setting.  Apart from the data structures used, another issue is the
fact that FEA codes are focused on treating nodal shape function
spaces, contrary to isogeometric function spaces.  To provide a
unified solution to the above (and many other) issues, we present 
the Geometry and Simulation Modules 
(\gismo \footnote{\gismo: http://www.gs.jku.at}) 
library, which provides a unified, object-oriented development 
framework suitable to implement advanced isogeometric techniques, 
such as 
dG
methods.

The rest of the paper is organized as follows. 
In Section~\ref {LMMT_sec:2:Volumetric}, we present and analyze 
the multipatch dG IgA for diffusion problems in volumetric 
2d and 3d computational domains including the case of low-regularity
solutions. We also study the mesh grading technology 
which allows us to recover the full convergence rates 
defined by the degree $k$ of the underlying polynomials.
The numerical analysis is accompanied by numerical experiments
fully confirming the theoretical results.
Section~\ref{LMMT_sec:3:Surfaces} is devoted to multipatch dG IgA 
of diffusion problems on open and closed surfaces.
We present and discuss  new numerical results including the case 
of jumping coefficients and non-matching meshes.
The concept and the main features of the \gismo library 
is presented in Section~\ref{LMMT_sec:5:G+SMO}.
All numerical experiments presented in this paper 
have been preformed in \gismo.


\section{Multipatch dG IgA for PDEs in 3d Computational Domains}
\label{LMMT_sec:2:Volumetric}

\subsection{Multipatch dG IgA Discretization}
\label{LMMT_subsec:2.1:MultipatchdG IgADiscretization}

The weak formulation of (\ref{LMMT_sec:1:eqn:Model1}) reads as follows: 
find  a function 
$u$ from the Sobolev space 
$W^{1,2}(\Omega)$
such that $u=u_D$ 
on $\partial \Omega$ and  satisfies the variational formulation
\begin{equation}
\label{LMMT_sec:2:eqn:Model1-VF}
 a(u,v)=l(v),{\ }  \forall  v \in 
W^{1,2}_{0}(\Omega) := \{v \in  W^{1,2}(\Omega):\, v=0\;\mbox{on}\; \partial \Omega\},
\end{equation}
where the bilinear form $a(\cdot,\cdot)$ and the linear form $l(\cdot)$ 
are defined by the relations
\begin{equation*}
a(u,v)=  \int_{\Omega}\alpha \nabla u \cdot \nabla v\,dx
\quad\text{and}\quad  
l(v)=\int_{\Omega}f v \,dx,
\end{equation*}
respectively. Beside Sobolev's Hilbert spaces $H^l(\Omega) = W^{l,2}(\Omega)$,
we later also need Sobolev's Banach spaces $W^{l,p}(\Omega)$, $p \in [1,\infty)$,
see, e.g., \cite{LMMT_AdamsFournier:2008}.
%
The existence and uniqueness of the solution $u$ 
of problem (\ref{LMMT_sec:2:eqn:Model1-VF}) can be derived by Lax-Milgram's Lemma 
\cite{LMMT_LCEvans:1998a}.  
In order to apply the IgA methodology 
to problem (\ref{LMMT_sec:1:eqn:Model1}), the domain
 $\Omega$ is subdivided into a union  of subdomains 
 $\mathcal{T}_H(\Omega):=\{\Omega_i\}_{i=1}^{N}$  such that

\begin{align}
\label{LMMT_1}
\bar{\Omega} &= \bigcup _{i=1}^N \bar{\Omega}_i,\quad \text{with}\quad
\Omega_i\cap \Omega_j = \emptyset, {\ } \text{if}{\ }j\neq i.
\end{align}
As we mentioned in the introduction, 
the subdivision of $\Omega$ is assumed to be compatible with the discontinuities of $\alpha$,
i.e. they are constant in the interior of 
$\Omega_i$, that is $\alpha|_{\Omega_i}:=\alpha^{(i)}$, and
 their discontinuities  appear only  
across the interfaces $F_{ij}=\partial \Omega_i \cap \partial \Omega_j$,
cf., e.g., \cite{LMMT_Dryja:2003a, LMMT_DiPietroErn:2012a,LMMT_DiPietroErn:2012b}. 
%
Throughout the paper,
we will use the notation $a\sim b$ meaning that there are positive constants $c$ and $C$ such that
$c a \leq b \leq C a$.

\par
As it is common in IgA, 
we assume  a parametric domain $\widehat{\Omega}$ of unit length, e.g., $\widehat{\Omega} = (0,1)^d$.
For any $\Omega_i$,  we associate   $d$ knot vectors $\Xi^{(i)}_n$, $n=1,...,d$,
 on  $\widehat{\Omega}$,
which 
 create a mesh $T^{(i)}_{h_i,\widehat{\Omega}} =\{\hat{E}_{m}\}_{m=1}^{M_i}$, where $\hat E_{m}$ are the micro-elements,
 see details in \cite{LMMT_HughesCottrellBazilevs:2005a}.
 We refer to $T^{(i)}_{h_i,\widehat{\Omega}}$ as the \textit{parametric  mesh of $\Omega_i$}. 
 For every $\hat{E}_m\in T^{(i)}_{h_i,\widehat{\Omega}}$,  we denote by $h_{\hat{E}_m}$ \textit{its diameter } 
 and by $h_i=\max\{h_{\hat{E}_m}\}$ \textit{the  mesh size}  of $T^{(i)}_{h_i,\widehat{\Omega}}$. 
We assume the following properties for every $T^{(i)}_{h_i,\widehat{\Omega}}$:
 \begin{itemize}
  \item quasi-uniformity: for every  $\hat{E}_m\in T^{(i)}_{h_i,\widehat{\Omega}}$ holds  ${h}_i\sim h_{\hat{E}_m}$,
   \item  for the  micro-element edges $e_{\hat{E}_m} \subset \partial \hat{E}_m$ holds  $h_{\hat{E}_m} \sim e_{\hat{E}_m}$.
  \end{itemize}
  
On every $T^{(i)}_{h_i,\widehat{\Omega}}$, we construct the finite dimensional 
space $\hat{\mathbb{B}}^{(i)}_{h_i}$ spanned by  
 ${B}$-spline basis functions of degree $k$, see \cite{LMMT_HughesCottrellBazilevs:2005a},

\begin{align}\label{LMMT_3.2aa}
 \hat{\mathbb{B}}^{(i)}_{h_i}=
span\{\hat{B}_j^{(i)}(\hat{x})\}_{j=0}^{dim(\hat{\mathbb{B}}_{h_{i}}^{(i)})}, 
\intertext{where every  basis function $\hat B_j^{(i)}(\hat{x})$ in (\ref{LMMT_3.2aa}) 
      is derived by means of tensor
           products of  one-dimensional 
$B$-spline 
basis functions, e.g.,}
 \label{LMMT_3.2bb}
\hat B_j^{(i)}(\hat{x}) = \hat B_{j_1}^{(i)}(\hat{x}_1)\cdot\cdot\cdot \hat B_{j_d}^{(i)}(\hat{x}_d).
\end{align}
For simplicity, we assume that  the basis functions
of every $\hat{\mathbb{B}}_{h_{i}}^{(i)}, i=1,...,N$, are of the same degree $k$.

 \par
 Every subdomain  $\Omega_i\in \mathcal{T}_H(\Omega)$, $i=1,...,N$, is exactly represented through a parametrization 
 (one-to-one mapping), cf. \cite{LMMT_HughesCottrellBazilevs:2005a}, having the form
\begin{align}
\label{LMMT_2_0}
 \Phi_i: \widehat{\Omega} \rightarrow \Omega_i, &\quad
 \Phi_i(\hat{x}) = \sum_{j}C^{(i)}_j \hat{B}_j^{(i)}(\hat{x}):=x\in \Omega_i,
 \end{align}
where $C_j^{(i)}$ are the control points and $\hat{x} = {\Psi}_i(x):=\Phi^{-1}_i(x)$. 
 Using  $\Phi_i$, 
we  construct a mesh $T^{(i)}_{h_i,\Omega_i} =\{E_{m}\}_{m=1}^{M_i}$ for every  $\Omega_i$,
whose vertices are the images of the vertices  of
the corresponding mesh $T^{(i)}_{h_i,\widehat{\Omega}}$ through  $\Phi_i$. 
 If $h_{\Omega_i}=\max\{h_{E_m}: {\ }E_m\in T^{(i)}_{h_i,\Omega_i}\}$ is the 
 {subdomain $\Omega_i$ mesh size}, then,  based on  definition (\ref{LMMT_2_0}) 
 of $\Phi_i$, we have the equivalence relation
 $h_i \sim  h_{\Omega_i}.$
\par
The mesh of $\Omega$ is considered to be $T_h(\Omega)=\bigcup_{i=1}^N T^{(i)}_{h_i,\Omega_i}$,
where we note that there are no matching mesh requirements on the interior interfaces 
$F_{ij}=\partial \Omega_i \cap \partial \Omega_j, i\neq j$.
For the sake of brevity in our notations,  the interior  faces of the boundary of the  
subdomains are denoted by $\mathcal{F}_{I}$ and 
 the collection of the  faces  that belong to  
           $\partial \Omega$  by $\mathcal{F}_B$, 
           e.g. $F\in \mathcal{F}_B$
           if there is a $\Omega_i$ such that $F=\partial \Omega_i \cap \partial \Omega$.
           We denote the set of all subdomain faces by $\mathcal{F}.$ 
 \par      
 Lastly, 
 we  define  the 
$B$-spline space 
$\mathbb{B}_h(\mathcal{T}_H(\Omega))=\mathbb{B}^{(1)}_{h_{1}}\times ... \times \mathbb{B}^{(N)}_{h_{N}}$ 
on $\Omega$,
where every  $\mathbb{B}^{(i)}_{h_{i}}$ is defined 
on $T^{(i)}_{h_i,\Omega_i}$ as follows
\begin{align}\label{LMMT_3}
 \mathbb{B}^{(i)}_{h_{i}}:=\{B_{h_{i}}^{(i)}|_{\Omega_i}: B_h^{(i)}(\hat{x})=
  \hat{B}_h^{(i)}\circ \Psi_i({x}),{\ }
 \forall \hat{B}_h^{(i)}\in \hat{\mathbb{B}}^{(i)}_{h_i} \}. 
\end{align}
We assume that the mappings $\Phi_{i}$ are regular in the sense 
that there exist positive constants $c_m$ and $c_M$ such that
\begin{align}\label{LMMT_2_a}
 c_m \leq |det(J_{i}(\hat{x}))| \leq c_M, {\ }\text{for} {\ }i=1,...,N, 
  {\ }\text{for all} {\ }\hat{x} \in \hat{\Omega},
\end{align}
where 
$J_{i}(\hat{x})$  denotes the Jacobian 
$\partial \Phi_{i}(x) / \partial(\hat{x})$ of the mapping $\Phi_{i}$.
%
\par
Now, for any $\hat{u}\in W^{m,p}(\hat{\Omega}), m\geq 0, p>1$,  we define the function  
\begin{align}\label{LMMT_2_b}
 \mathcal{U}(x)=\hat{u}(\Psi_i(x)),{\ }x\in \Omega_i,
\end{align}
and the following    relation holds true, see \cite{LMMT_LangerToulopoulos:2014a},
\begin{align}\label{LMMT_2_c}
 C_m  \|\hat{u}\|_{W^{m,p}(\hat{\Omega}}) \leq  \|\mathcal{U}\|_{W^{m,p}(\Omega_i)} \leq C_M \|\hat{u}\|_{W^{m,p}(\hat{\Omega})},
\end{align}
where  the constants $C_m$ and $C_M$ depending on 
$
C_m:=C_m(\max_{m_0\leq m}(\|D^{m_0}\Phi_i\|_{\infty}),$ \\$ \|det(\Psi^{'}_{i})\|_{\infty})$ and 
$C_M:=C_M(\max_{m_0\leq m}(\|D^{m_0}\Psi_i\|_{\infty}),\|det(\Phi^{'}_{i})\|_{\infty}).
$
The usefulness of inequalities  (\ref{LMMT_2_c}) in the analysis is the following: every required relation
can be proved  in the parametric domain and then using  (\ref{LMMT_2_c}) we can directly have the 
expression on the physical subdomain. 

We use the $B$-spline spaces $\mathbb{B}^{(i)}_h$ defined in (\ref{LMMT_3}) 
for approximating  the solution of (\ref{LMMT_sec:2:eqn:Model1-VF})
in every subdomain $\Omega_i$. 
Continuity requirements for $\mathbb{B}_h(\mathcal{T}_H(\Omega))$  are not imposed on the interfaces 
$F_{ij}$ of the subdomains, clearly $\mathbb{B}_h(\mathcal{T}_H(\Omega))\subset L^2(\Omega)$ but 
$\mathbb{B}_h(\mathcal{T}_H(\Omega))\nsubseteq W^{1,2}(\Omega)$.
Thus, problem (\ref{LMMT_sec:2:eqn:Model1-VF}) is discretized by discontinuous Galerkin techniques on
$F_{ij}$, see, e.g., \cite{LMMT_Dryja:2003a}.
Using the notation $v_h^{(i)}:=v_h|_{\Omega_i}$,
we define the average and the jump of $v_h$ on $F_{ij}\in\mathcal{F}_I$, respectively, by
\begin{align}
   \{v_h\}:=\frac{1}{2}(v_h^{(i)}+v_h^{(j)}), &\quad \text{and}\quad\llbracket v_h \rrbracket :=v_h^{(i)} - v_h^{(j)},\\
   \intertext{ and for $F_i\in \mathcal{F}_B$}
   \label{LMMT_3.16c}
  \{v_h\}:=v_h^{(i)}, &\quad\text{and}\quad \llbracket v_h\rrbracket := v_h^{(i)}.
\end{align}

The dG-IgA scheme 
reads  as follows: find $u_h\in \mathbb{B}_h(\mathcal{T}_H(\Omega))$ such that
\begin{flalign}\label{LMMT_8a}
 a_h(u_h,v_h)=&l(v_h)+p_D(u_D,v_h),{\ } \forall v_h \in \mathbb{B}_h(\mathcal{T}_H(\Omega)),
 \intertext{where}
 \label{LMMT_8b}
 a_h(u_h,v_h) = & \sum_{i=1}^N a_i(u_h,v_h)-\sum_{i=1}^N \Big(\frac{1}{2}s_i(u_h,v_h) - p_i(u_h,v_h)\Big),
 \intertext{ with  the  bilinear forms} 
\nonumber
 a_i(u_h,v_h) =& \int_{\Omega_i}\alpha\nabla u_h\nabla v_h\,dx, \\
 \nonumber
  s_i( u_h,v_h)=& \sum_{F_{ij}\subset \partial \Omega_i}\int_{F_{ij}} \{\alpha\nabla u_h\}\cdot \mathbf{n}_{F_{ij}} \llbracket v_h \rrbracket +
                                                 \{\alpha\nabla v_h\}\cdot \mathbf{n}_{F_{ij}} \llbracket u_h \rrbracket\,ds, \\
  \nonumber
 p_i(u_h,v_h) =&
                   \begin{cases}
 	               \sum_{F_{ij}\subset \partial \Omega_i} \int_{F_{ij}} \mu \Big(\frac{ \alpha^{(j)}}{h_j}+\frac{ \alpha^{(i)}}{h_i}\Big) \llbracket  u_h \rrbracket \llbracket v_h \rrbracket\,ds, & \text{if} {\ }F_{ij}\in \mathcal{F}_I\\
 	               \sum_{F_{i}\subset \partial \Omega_i} \int_{F_{i}} \mu \frac{ \alpha^{(i)}}{h_i} \llbracket  u_h \rrbracket \llbracket v_h \rrbracket\,ds,&\text{if} {\ }F_{i}\in \mathcal{F}_B\\
                   \end{cases}\\
 \nonumber
 p_D(u_D,v_h)=&   \sum_{F_{i}\subset \partial \Omega_i}\int_{F_{i}} \mu \frac{\alpha^{(i)}}{h_i}  u_D  v_h\,ds,{\ }{\ }F_{i}\in \mathcal{F}_B
\end{flalign}
where  $\alpha^{(i)}:=\alpha|_{\Omega_i}$ and
the unit normal vector  $\mathbf{n}_{F_{ij}}$ is oriented from $\Omega_i$ towards the interior of $\Omega_j$
and the  parameter $\mu>0$
will be specified later in the error analysis, cf. \cite{LMMT_Dryja:2003a}.
\par
For notation convenience in what follows, we will use the following  expression 
$$
 \int_{F_{ij}} \mu \big(\frac{\alpha^{(j)}}{h_j}+\frac{\alpha^{(i)}}{h_i}\big) \llbracket  u_h \rrbracket \llbracket v_h \rrbracket\,ds,
$$
for both cases, $F_{ij}\in \mathcal{F}_I$ and $F_{i}\in \mathcal{F}_B$. 
In the latter case, we will assume that $\alpha^{(j)}=0$.
%
\subsection{Auxiliary Results}
\label{LMMT_subsec:2.2:AuxiliaryResults}
We will use the following  auxiliary results which have been shown in \cite{LMMT_LangerToulopoulos:2014a}.

\begin{lemma}\label{LMMT_lemma1}
Let $u\in W^{l,p}(\mathcal{T}_H(\Omega))$ with $  l\geq 2$ and $ p>1$. Then there is a positive constant
$C$, which is determined according to the $C_m$ and $C_M$ of (\ref{LMMT_2_c}),  such  that

\begin{align}\label{LMMT_13b}
   \int_{{F_{ij}}\subset \partial \Omega_i}|u|^p\,ds \leq C \Big( h_{i}^{-1}\int_{\Omega_i} |u|^p\,dx +
   h_{i}^{p-1}\int_{\Omega_i}|\nabla u|^p\,dx\Big).                                                       
\end{align}
\end{lemma}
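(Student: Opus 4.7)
The plan is to reduce the estimate to a scaled trace inequality on the parametric side and then transport it back using the regularity of the NURBS mapping. First I would use the change of variables $x = \Phi_i(\hat{x})$ to rewrite both sides on the parameter domain $\hat\Omega$. Since $\Phi_i$ is smooth with Jacobian bounded from above and below by (\ref{LMMT_2_a}), and since the face $F_{ij} \subset \partial\Omega_i$ is the image under $\Phi_i$ of a face $\hat F \subset \partial \hat\Omega$, the surface element on $F_{ij}$ is equivalent to that on $\hat F$. Consequently
\begin{equation*}
\int_{F_{ij}} |u|^p\, ds \;\sim\; \int_{\hat F} |\hat u|^p\, d\hat s ,
\end{equation*}
where $\hat u = u \circ \Phi_i$, with constants depending only on $\Phi_i$.

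Next I would work element-by-element on the parametric mesh $T^{(i)}_{h_i,\hat\Omega}$. For every micro-element $\hat E_m$ having a face $\hat e = \hat E_m \cap \hat F$, I would start from the standard trace inequality on the reference cube $\hat K = (0,1)^d$, namely $\|v\|_{L^p(\partial \hat K)}^p \leq C(\|v\|_{L^p(\hat K)}^p + \|\nabla v\|_{L^p(\hat K)}^p)$, and scale it to $\hat E_m$ (which has diameter $h_{\hat E_m}$) using affine reference-to-element transformations. This yields
\begin{equation*}
\int_{\hat e} |\hat u|^p\, d\hat s \;\leq\; C\!\left( h_{\hat E_m}^{-1} \int_{\hat E_m} |\hat u|^p\, d\hat x + h_{\hat E_m}^{p-1} \int_{\hat E_m} |\hat\nabla \hat u|^p\, d\hat x \right),
\end{equation*}
which is valid because $\hat u \in W^{1,p}(\hat E_m)$ (even $W^{l,p}$ with $l\geq 2$). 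Summing over all micro-elements with a face on $\hat F$ and invoking the quasi-uniformity assumption $h_{\hat E_m} \sim h_i$ gives the corresponding estimate over $\hat F$ and $\hat\Omega$.

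Finally, I would pull the right-hand side back to the physical subdomain $\Omega_i$ by applying (\ref{LMMT_2_c}) with $m=0$ to the $L^p$ term and with $m=1$ (more precisely, the chain rule combined with the boundedness of $J_i^{-1}$) to the gradient term; the relation $h_i \sim h_{\Omega_i}$ ensures the scaling in the final bound is exactly the one required. The principal obstacle is bookkeeping the two independent scalings: the fixed, $h$-independent regularity of $\Phi_i$, which contributes only to constants through (\ref{LMMT_2_c}), must be cleanly separated from the $h_{\hat E_m}$-scaling that arises from the reference-to-micro-element maps and supplies the $h_i^{-1}$ and $h_i^{p-1}$ factors. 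Once those two layers are disentangled, the lemma follows by combining the above three steps.
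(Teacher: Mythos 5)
Your argument is correct and is precisely the route the paper intends: it reduces everything to the parametric domain via the norm equivalence (\ref{LMMT_2_c}) (the paper explicitly advertises this as the standard device), applies the scaled trace inequality element-by-element on the tensor-product parametric mesh, and sums using quasi-uniformity to obtain the $h_i^{-1}$ and $h_i^{p-1}$ factors, exactly as in the cited proof of \cite{LMMT_LangerToulopoulos:2014a}. No gaps; the only point worth making explicit is that the restriction of $\Phi_i$ to the face $\hat F$ is itself bi-Lipschitz by (\ref{LMMT_2_a}), which justifies the equivalence of the surface measures.
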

\begin{lemma}\label{LMMT_lemma2}
 For all $v_h\in {\mathbb{B}}^{(i)}_{h_i}$ defined on  $ T^{(i)}_{h_i,{\Omega_i}}$, there is a 
 positive constant
 $C$, depending on the mesh quasi-uniformity parameters and $C_m$ and $C_M$ of (\ref{LMMT_2_c})  but not on $h_i$, such that
 \begin{align}\label{LMMT_14}
  \|\nabla v_h \|^p_{L^p({\Omega_i})} \leq C h_i^{-p}\| v_h \|^p_{L^p({\Omega_i})}.
 \end{align}
\end{lemma}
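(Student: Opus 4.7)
The claim is a standard inverse inequality for multivariate B-splines pushed forward by a fixed, smooth geometry map. The natural route is the same three-step scheme used throughout IgA analysis: pull everything back to the parameter domain, do the estimate there micro-element by micro-element, then push forward.

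\textbf{Step 1: Pull back via $\Phi_i$.} Set $\hat v_h=v_h\circ \Phi_i\in \hat{\mathbb B}^{(i)}_{h_i}$. The chain rule gives $\nabla_x v_h(x)=J_i(\hat x)^{-\top}\hat\nabla \hat v_h(\hat x)$ with $\hat x=\Psi_i(x)$. Since $\Phi_i$ is smooth and satisfies (\ref{LMMT_2_a}), $\|J_i^{-1}\|_{L^\infty(\hat\Omega)}$ and $|\det J_i|$ are bounded from above and below by constants depending only on $\Phi_i$. Combining this with a change of variables and applying the $m=0$ case of (\ref{LMMT_2_c}) reduces the claim to the parametric inverse inequality
\begin{equation*}
\|\hat\nabla\hat v_h\|_{L^p(\hat\Omega)}^{p} \leq C\, h_i^{-p}\,\|\hat v_h\|_{L^p(\hat\Omega)}^{p}.
\end{equation*}
All constants produced in this reduction are $h_i$-independent, and are of precisely the type described in the statement.

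\textbf{Step 2: Micro-element argument on $\hat\Omega$.} On any micro-element $\hat E_m\in T^{(i)}_{h_i,\hat\Omega}$, the tensor-product structure (\ref{LMMT_3.2bb}) guarantees that $\hat v_h|_{\hat E_m}$ is a polynomial of degree at most $k$ in each coordinate. Affinely mapping $\hat E_m$ onto a reference unit-size element $\tilde E$ and using the equivalence of norms on the finite-dimensional space $\mathbb Q_k(\tilde E)$ yields $\|\tilde\nabla\tilde v\|_{L^p(\tilde E)}\leq C_k\|\tilde v\|_{L^p(\tilde E)}$. Scaling back from $\tilde E$ to $\hat E_m$ inserts a factor $h_{\hat E_m}^{-1}$ on each derivative, which by the quasi-uniformity and edge-equivalence assumptions on $T^{(i)}_{h_i,\hat\Omega}$ is equivalent to $h_i^{-1}$. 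Raising to the $p$-th power and summing the contributions over all $\hat E_m$ gives the desired parametric inverse inequality with a constant depending only on $k$, $p$, $d$ and the quasi-uniformity parameters.

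\textbf{Step 3: Push forward.} Applying Step 1 in reverse, i.e.\ using (\ref{LMMT_2_c}) to convert $\|\hat v_h\|_{L^p(\hat\Omega)}$ back to $\|v_h\|_{L^p(\Omega_i)}$ and using the boundedness of $J_i^{-1}$ and $|\det J_i|$ one more time on the gradient side, produces exactly (\ref{LMMT_14}) with the advertised dependence of $C$.

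\textbf{Main obstacle.} There is no deep difficulty: the only point that requires care is Step 2, where one must exploit that on each knot-cell $\hat E_m$ the B-spline is genuinely a polynomial of bounded degree (so that the classical Bernstein-type inverse estimate applies). Everything else is bookkeeping of the $\Phi_i$-dependent constants, which are harmless because $\Phi_i$ and its inverse are fixed and smooth.
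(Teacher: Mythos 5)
Your proposal is correct and follows exactly the strategy the paper intends: the paper defers the proof of this lemma to \cite{LMMT_LangerToulopoulos:2014a} but explicitly states that every such relation is ``proved in the parametric domain and then using (\ref{LMMT_2_c})\dots on the physical subdomain,'' which is precisely your pull-back, micro-element inverse estimate, push-forward scheme. No substantive differences to report.
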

\begin{lemma}\label{LMMT_lemma3}
 For all $v_h\in {\mathbb{B}}^{(i)}_{h_i}$ defined on  $ T^{(i)}_{h_i,{\Omega_i}}$ and for all ${F}_{ij}\subset \partial {\Omega_i}$,
 there is a positive constant $C$, which depends on the mesh quasi-uniformity parameters $C_m$ and $C_M$ of (\ref{LMMT_2_c})  but not on $h_i$, such that
 \begin{align} \label{LMMT_18}
   \|v_h \|^p_{L^p({F_{ij}})} \leq  C h_i^{-1}\| v_h \|^p_{L^p({\Omega_i})}.
 \end{align}
\end{lemma}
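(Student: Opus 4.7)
My plan is to derive the boundary estimate directly by chaining Lemma~\ref{LMMT_lemma1} (a patchwise trace inequality valid for general $W^{l,p}$ functions) with Lemma~\ref{LMMT_lemma2} (the B-spline inverse inequality), both applied to the restriction of $v_h$ to the patch $\Omega_i$. This is the shortest route because each of the two ingredients has already been established with constants that are independent of $h_i$ and that depend only on the quasi-uniformity constants and on $C_m, C_M$ of (\ref{LMMT_2_c}).

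First I would observe that $v_h|_{\Omega_i}$ is the image under $\Phi_i$ of a tensor-product $B$-spline of degree $k$, hence belongs to $W^{l,p}(\Omega_i)$ for every admissible $l \leq k$ and $p > 1$. In particular, the hypotheses of Lemma~\ref{LMMT_lemma1} are satisfied, giving
\begin{equation*}
\|v_h\|^p_{L^p(F_{ij})} \leq C \bigl( h_i^{-1} \|v_h\|^p_{L^p(\Omega_i)} + h_i^{p-1} \|\nabla v_h\|^p_{L^p(\Omega_i)} \bigr).
\end{equation*}
Next, Lemma~\ref{LMMT_lemma2} applied on $\Omega_i$ yields
\begin{equation*}
\|\nabla v_h\|^p_{L^p(\Omega_i)} \leq C h_i^{-p} \|v_h\|^p_{L^p(\Omega_i)}.
\end{equation*}
Substituting this bound into the gradient term of the previous display, the powers combine as $h_i^{p-1} \cdot h_i^{-p} = h_i^{-1}$, so the two contributions merge into a single estimate of the form $C h_i^{-1} \|v_h\|^p_{L^p(\Omega_i)}$, which is precisely (\ref{LMMT_18}).

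I do not anticipate any genuine obstacle; the statement is essentially an immediate corollary of the two previous lemmas. The only mild point requiring attention is the bookkeeping of the constant: since both Lemma~\ref{LMMT_lemma1} and Lemma~\ref{LMMT_lemma2} deliver constants that depend on the mesh quasi-uniformity parameters and on $C_m, C_M$ but not on $h_i$, the same is automatically true of the resulting constant in Lemma~\ref{LMMT_lemma3}. An alternative but longer route would be to pull back to the parametric domain via $\Psi_i$, invoke a scaled trace inequality on each micro-element $\hat{E}_m$ that carries a face on $\Psi_i(F_{ij})$, combine it with a local inverse estimate, sum over such micro-elements using quasi-uniformity, and finally transfer the bound back to $\Omega_i$ through the norm equivalence (\ref{LMMT_2_c}); this would localize the argument but would essentially reprove what is already encapsulated in the two preceding lemmas.
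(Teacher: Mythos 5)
Your argument is correct and is essentially the intended one: the paper defers the proof of Lemma~\ref{LMMT_lemma3} to \cite{LMMT_LangerToulopoulos:2014a}, where (\ref{LMMT_18}) is obtained precisely by inserting the inverse inequality (\ref{LMMT_14}) of Lemma~\ref{LMMT_lemma2} into the trace inequality (\ref{LMMT_13b}) of Lemma~\ref{LMMT_lemma1}, with the exponents combining as $h_i^{p-1}\cdot h_i^{-p}=h_i^{-1}$ exactly as you describe. The only cosmetic point is that Lemma~\ref{LMMT_lemma1} is stated for $l\geq 2$ while its right-hand side involves only first derivatives, so applying it to spline functions of low degree or reduced continuity (which need not lie in $W^{2,p}(\Omega_i)$) is harmless but deserves the one-line remark that the trace estimate is valid for $W^{1,p}$ functions.
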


\begin{lemma}\label{LMMT_lemma4_a}
 Let $v_h\in {\mathbb{B}}^{(i)}_{h_i}$  such that
 $v_h \in W^{l,p}({E})\cap W^{m,q}({E}), {\ } {E}\in   T^{(i)}_{h_i,{\Omega_i}}$,
 and  $0\leq m\leq l, {\ }1\leq p,q \leq \infty$.
 Then there is a positive constant
 $C$, depended on the mesh quasi-uniformity parameters $C_m$ and $C_M$ of (\ref{LMMT_2_c}) but not on $h_i$, such that
 \begin{align}\label{LMMT_4.13}
  | v_h |_{W^{l,p}({E})} \leq  C  h^{m-l-\frac{d}{q}+\frac{d}{p}}_{i} |  v_h |_{W^{m,q}({E})}.
 \end{align}
\end{lemma}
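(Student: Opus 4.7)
The plan is to reduce the inequality to its classical polynomial counterpart on a fixed reference cube via a two-step change of variables: first from the physical element $E$ to the corresponding parametric element $\hat{E} = \Psi_i(E)$ using the patch map $\Phi_i$, and then from $\hat{E}$ to the unit cube $\tilde{E}$ via an affine scaling. On $\tilde{E}$ the pull-back $\tilde{v}_h$ lies in a fixed finite-dimensional space of tensor-product polynomials of degree at most $k$, on which seminorm comparisons become automatic and the $h_i$-independent constants can be absorbed.

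More concretely, I would proceed in three steps. First, using an element-wise version of the norm equivalence (\ref{LMMT_2_c})---available because $\Phi_i$ is a smooth, $h_i$-independent parameterization with Jacobian bounded above and below by (\ref{LMMT_2_a})---I transfer both sides of the desired inequality from $E$ to $\hat{E}$, writing $\hat{v}_h = v_h\circ\Phi_i$. Second, by the assumed quasi-uniformity of $T^{(i)}_{h_i,\hat{\Omega}}$, the parametric element $\hat{E}$ has diameter $\sim h_i$, so there is an affine bijection $A_{\hat{E}}\colon \tilde{E}\to\hat{E}$ whose Jacobian scales like $h_i$ in both directions. With $\tilde{v}_h=\hat{v}_h\circ A_{\hat{E}}$, the usual change-of-variables identity then yields, for any order $s$ and exponent $r$,
\begin{equation*}
|\hat{v}_h|_{W^{s,r}(\hat{E})} \sim h_i^{-s+d/r}\,|\tilde{v}_h|_{W^{s,r}(\tilde{E})},
\end{equation*}
which, applied separately to $(s,r)=(l,p)$ and $(s,r)=(m,q)$, delivers exactly the exponent $m-l-d/q+d/p$ in the final estimate. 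Third, on the fixed reference cube $\tilde{E}$, the function $\tilde{v}_h$ is a tensor-product polynomial of degree $\leq k$, and the classical inverse estimate
\begin{equation*}
|\tilde{v}_h|_{W^{l,p}(\tilde{E})} \leq C\,|\tilde{v}_h|_{W^{m,q}(\tilde{E})}
\end{equation*}
holds by the following observation: the kernel of the right-hand seminorm consists of polynomials of degree less than $m$, which, since $m\leq l$, is contained in the kernel (polynomials of degree less than $l$) of the left-hand seminorm; therefore both quantities descend to the finite-dimensional quotient, on which the right-hand side is a genuine norm and all continuous seminorms are mutually equivalent, with constants depending only on $k$ and $d$. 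Chaining the three steps in the order \emph{physical $\to$ parametric $\to$ reference $\to$ parametric $\to$ physical} produces the desired bound.

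The main technical obstacle is the first step. Unlike the affine scaling of the second step, the patch map $\Phi_i$ is in general a nonlinear NURBS parameterization, and a direct application of the Fa\`a~di~Bruno chain rule to express $D^l v_h$ through derivatives of $\hat{v}_h$ mixes seminorms of different orders, preventing a clean seminorm-to-seminorm transfer between $E$ and $\hat{E}$. The cure is to invoke the full Sobolev norm equivalence (\ref{LMMT_2_c}) throughout the transfer and to descend back to the pure seminorm only at the last moment, using the finite-dimensional equivalence on $\tilde{E}$ to swallow the lower-order contributions produced by the chain rule; keeping track of the $h_i$-exponent during this absorption is the only bookkeeping subtlety, but since all mixed-order terms carry a non-negative extra power of $h_i$ relative to the dominant one, they are harmless for $h_i\leq 1$.
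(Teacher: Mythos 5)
The paper does not actually prove this lemma---it is imported from \cite{LMMT_LangerToulopoulos:2014a}---so I can only judge your argument on its own terms. The skeleton (physical element $\to$ parametric element $\to$ fixed reference cube, finite-dimensional seminorm equivalence there, and back) is the standard and correct one, your scaling exponents are right, and the kernel-inclusion argument for $|\tilde v_h|_{W^{l,p}(\tilde E)}\leq C|\tilde v_h|_{W^{m,q}(\tilde E)}$ on the polynomial space is correct. You have also correctly identified the one delicate point, namely the non-affine transfer through $\Phi_i$.

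The cure you propose for that point, however, does not work, and this is a genuine gap whenever $m\geq 2$. The Fa\`a di Bruno transfer replaces $|v_h|_{W^{l,p}(E)}$ by $\sum_{j=1}^{l}|\hat v_h|_{W^{j,p}(\hat E)}$, so after the affine rescaling you must bound each reference-element seminorm $|\tilde v_h|_{W^{j,p}(\tilde E)}$ with $1\leq j<m$ by $C\,|\tilde v_h|_{W^{m,q}(\tilde E)}$. That inequality is false on the polynomial space: for $\tilde v_h\in\mathbb{P}_{m-1}\setminus\mathbb{P}_{j-1}$ the right-hand side vanishes while the left-hand side does not. The extra non-negative powers $h_i^{l-j}$ you invoke are irrelevant, because no small positive coefficient can repair an inequality whose right-hand side can be zero while its left-hand side is positive; the obstruction lives entirely at the reference-element level, where $h_i$ is just a multiplicative constant. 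The same defect reappears on the return trip, since $|\hat v_h|_{W^{m,q}(\hat E)}\lesssim\sum_{j=1}^{m}|v_h|_{W^{j,q}(E)}$ and the terms with $j<m$ cannot be traded for the pure seminorm $|v_h|_{W^{m,q}(E)}$. In short, passing to full norms destroys the common kernel $\mathbb{P}_{m-1}(E)$ that both sides of (\ref{LMMT_4.13}) share, and finite-dimensionality cannot resurrect it afterwards. A correct repair is to subtract from $v_h$ a polynomial $\pi v_h\in\mathbb{P}_{m-1}(E)$ \emph{in the physical variables} (which changes neither side of (\ref{LMMT_4.13}) since $l\geq m$) and to control the lower-order seminorms of $v_h-\pi v_h$ by $Ch_i^{m-j}|v_h|_{W^{m,q}(E)}$ through a scaled Deny--Lions/Bramble--Hilbert argument before transferring; one must then also track that the pull-back of $v_h-\pi v_h$ still lies in a finite-dimensional space of uniformly bounded dimension. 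As written, your argument is complete only for $m\in\{0,1\}$, where the chain rule produces no derivatives of order below $m$; this happens to cover the instances actually used in this paper (Lemma \ref{LMMT_lemma5.1} needs only $l=m\leq 1$), but it does not prove the lemma in the stated generality.
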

\subsection{Analysis of the dG IgA Discretization}
\label{LMMT_subsec:2.3:AnalysisofthedG IgADiscretization}
Next,  we study the convergence estimates of the method (\ref{LMMT_8a}) under the following regularity assumption
for the  weak solution   
 $u\in W^{1,2}(\Omega) \cap W^{l,p}(\mathcal{T}_H(\Omega))$ with $l\geq 2$ and 
 $p\in (\max\{1,\frac{2d}{d+2(l-1)}\},2]$.  
For simplicity of the presentation, we assume that  $l\leq k+1$.
Nevertheless, for the case of highly smooth solutions,
the estimates given below, see Lemma \ref{LMMT_lemma5.3},  can be expressed 
in terms of the underlying polynomial degree $k$. More precisely, the estimate $\delta(l,p,d)=l+( d/2- d/p-1)$,
must  be replaced by $\delta(l,p,d)=\min\{l+( d/2- d/p-1, k\}$.
  We use the enlarged space 
$W_h^{l,p}:= W^{1,2}(\Omega) \cap W^{l,p}(\mathcal{T}_H(\Omega))+\mathbb{B}_h(\mathcal{T}_H(\Omega))$, 
and
  will show that the dG IgA method converges in optimal rate with respect to $\|.\|_{dG}$ norm
\begin{equation}\label{LMMT_20}
 \|u\|^2_{dG} = \sum_{i=1}^N\Big(\alpha^{(i)}\|\nabla u^{(i)}\|^2_{L^2(\Omega_i)} +
                p_i(u^{(i)},u^{(i)}) \Big), {\ }u\in  W_h^{l,2}.
\end{equation}
For the error analysis, it is necessary  to show the continuity and coercivity properties of the 
bilinear form $a_h(.,.)$ of (\ref{LMMT_8b}) and interpolation estimates in $\|.\|_{dG}$ norm.
We start by  providing  estimates on how well the quasi-interpolant $\Pi_hu$  approximates 
$u\in W^{l,p}(\Omega_i)$, see proof in \cite{LMMT_LangerToulopoulos:2014a}.
\begin{lemma}\label{LMMT_lemma5.3}
 Let $u\in W^{l,p}(\Omega_i)$ with 
 $l\geq 2$ and $ p\in (\max\{1,\frac{2d}{d+2(l-1)}\},2]$
 and let 
  $E={\Phi}_i({E}), {E}\in T^{(i)}_{h_i,{\Omega}}$. Then for  $0 \leq m \leq l \leq k+1$, 
  there exist 
 constants $C_i:=C_i\big(\max_{l_0 \leq l}(\|D^{l_0}{\Phi}_i\|_{L^{\infty}(\Omega_i)}),
 \|u\|_{W^{l,p}(\Omega_i)}\big)$, such that
 \begin{align}\label{LMMT_5.11}
 \sum_{E\in T^{(i)}_{h_i,\Omega_i}} |u-\Pi_h u|^p_{W^{m,p}(E)} \leq C_i h_i^{p(l-m)} . 
\end{align}
 Moreover,  we have the following estimates for $F_{ij}= \partial \Omega _i \cap \partial \Omega_j$:
 \begin{align}\nonumber
 (i)&{\ } h_i^{\beta} \|\nabla u^{(i)}-\nabla \Pi_h u^{(i)}\|^p_{L^{p}(F_{ij})} \leq  C_i C_{d,p} h_i^{p(l-1)-1+\beta},  \\
  \nonumber
 (ii)&{\ } \big(\frac{\alpha^{(j)}}{h_j}+\frac{\alpha^{(i)}}{h_i}\big)
 \|\llbracket u-\Pi_h u\rrbracket \|^2_{L^{2}(F_{ij})} \leq \\  
  \nonumber
 &\quad C_i\alpha^{(j)}\frac{h_i}{h_j}\Big( h_i^{\delta(l,p,d)}\|u\|^p_{W^{l,p}(\Omega_i)}\Big)^{2} + 
  C_j\alpha^{(i)}\frac{h_j}{h_i}\Big( h_j^{\delta(l,p,d)}\|u\|_{W^{l,p}(\Omega_j)}\Big)^{2}+\\
  \nonumber
& \qquad  C_j\Big( h_j^{\delta(l,p,d)}\|u\|_{W^{l,p}(\Omega_j)}\Big)^{2} + 
  C_i\Big( h_i^{\delta(l,p,d)}\|u\|_{W^{l,p}(\Omega_i)}\Big)^{2}, \\
  \nonumber
(iii)&{\ }  \|u-\Pi_h u\|^2_{dG} \leq   \sum_{i=1}^N C_i\Big( h_i^{\delta(l,p,d)}\|u\|_{W^{l,p}(\Omega_i)}\Big)^{2}  +\\
\nonumber
 &\qquad \qquad \sum_{i=1}^N \sum_{F_{ij}\subset \partial \Omega_i}C_i\alpha^{(j)}\frac{h_i}{h_j}\Big( h_i^{\delta(l,p,d)}\|u\|_{W^{l,p}(\Omega_i)}\Big)^{2},
 \end{align}
 where $\delta(l,p,d)= l+( d/2- d/p-1)$. 
\end{lemma}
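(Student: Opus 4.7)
The plan is to reduce everything to the parametric domain $\widehat{\Omega}$ via the mapping $\Phi_i$ and the norm equivalence (\ref{LMMT_2_c}), where the standard tensor-product B-spline quasi-interpolant $\hat{\Pi}$ is available. First I would establish the element-wise bound (\ref{LMMT_5.11}) by a Deny--Lions / Bramble--Hilbert argument: on each parametric element $\hat{E}$, the quasi-interpolant reproduces polynomials of degree up to $k\ge l-1$, so that
$$\|\hat u - \hat\Pi\hat u\|_{W^{m,p}(\hat E)} \le C\, h_i^{l-m}\, |\hat u|_{W^{l,p}(\hat E)},\qquad 0\le m\le l\le k+1,$$
after the usual affine scaling to a unit reference element; summation over $\hat{E}\in T^{(i)}_{h_i,\widehat\Omega}$ and the push-forward through (\ref{LMMT_2_c}) yield (\ref{LMMT_5.11}).

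For estimate (i), I would apply Lemma~\ref{LMMT_lemma1} to $w = \nabla u - \nabla\Pi_h u$ on the face $F_{ij}\subset\partial\Omega_i$, obtaining
$$\|w\|^p_{L^p(F_{ij})} \le C\bigl(h_i^{-1}\|w\|^p_{L^p(\Omega_i)} + h_i^{p-1}\|\nabla w\|^p_{L^p(\Omega_i)}\bigr),$$
and then invoke (\ref{LMMT_5.11}) with $m=1$ and $m=2$, which gives $C_i(h_i^{p(l-1)-1} + h_i^{p(l-1)+p-1-p})=C_i h_i^{p(l-1)-1}$; multiplying by $h_i^\beta$ yields (i). The proof of (iii) then follows by splitting $\|u-\Pi_h u\|_{dG}^2$ into its volumetric gradient contribution, which is controlled by (\ref{LMMT_5.11}) with $m=1$ combined with the $L^p\to L^2$ scaling, and its penalty contribution $p_i(\cdot,\cdot)$, which is directly controlled by summing (ii) over all interfaces $F_{ij}\subset\partial\Omega_i$.

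The main obstacle is the jump estimate (ii), because the exponent $\delta(l,p,d)=l+d/2-d/p-1$ is genuinely of Sobolev-embedding type and the regularity is sub-Hilbertian ($p\le 2$). My approach is to expand $\llbracket u-\Pi_h u\rrbracket = (u^{(i)}-\Pi_h u^{(i)}) - (u^{(j)}-\Pi_h u^{(j)})$, apply a triangle/Young inequality, and then estimate each side in $L^2(F_{ij})$ via Lemma~\ref{LMMT_lemma1} with $p=2$:
$$\|u^{(i)}-\Pi_h u^{(i)}\|_{L^2(F_{ij})}^2 \le C\bigl(h_i^{-1}\|u-\Pi_h u\|_{L^2(\Omega_i)}^2 + h_i\|\nabla(u-\Pi_h u)\|_{L^2(\Omega_i)}^2\bigr).$$
To pass from $W^{l,p}$ information to the $L^2$ volumetric norms on the right, I would work element-by-element on the reference element and invoke the Sobolev embedding $W^{l,p}(\hat E)\hookrightarrow L^2(\hat E)$ together with $W^{l-1,p}(\hat E)\hookrightarrow L^2(\hat E)$; the condition $p>2d/(d+2(l-1))$ is precisely what guarantees the latter, hence the choice of $\delta$. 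Tracking the Jacobian scaling $h_i^{d/2-d/p}$ that arises in transferring $L^p$ to $L^2$ on each element, combining with the factor $h_i^{l}$ (respectively $h_i^{l-1}$) from the Deny--Lions estimate, and then multiplying by the penalty weight $\alpha^{(j)}/h_j+\alpha^{(i)}/h_i$, one recovers exactly the four terms on the right-hand side of (ii); the cross terms with factor $\alpha^{(j)}h_i/h_j$ (resp.\ $\alpha^{(i)}h_j/h_i$) arise from pairing the weight from one side of $F_{ij}$ against the volumetric estimate from the opposite side, after the Young-type splitting of the jump.
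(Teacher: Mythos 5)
Your proposal is correct and follows essentially the route the paper intends: the paper defers the detailed proof to \cite{LMMT_LangerToulopoulos:2014a}, but the ingredients it does expose --- the pull-back principle (\ref{LMMT_2_c}), the trace inequality of Lemma~\ref{LMMT_lemma1}, and the remark that $(iii)$ follows from $(i)$, $(ii)$ and Lemma~\ref{LMMT_lemma1} --- are exactly the tools you deploy, and your bookkeeping of the exponent $\delta(l,p,d)$ via the Sobolev embeddings $W^{l,p}(\hat E)\hookrightarrow L^2(\hat E)$ and $W^{l-1,p}(\hat E)\hookrightarrow L^2(\hat E)$ (the latter being precisely what the lower bound on $p$ guarantees) is the right mechanism for handling $p\le 2$, where a naive ``scaling'' from $L^p$ to $L^2$ would not suffice. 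The only detail worth adding is that the elementwise Deny--Lions bound for a B-spline quasi-interpolant controls $|\hat u-\hat\Pi\hat u|_{W^{m,p}(\hat E)}$ by the $W^{l,p}$-seminorm over the support extension of $\hat E$ rather than over $\hat E$ itself, so the summation in (\ref{LMMT_5.11}) also relies on the finite-overlap property of these extensions.
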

We mention that the proof of estimate $(iii)$ in Lemma \ref{LMMT_lemma5.3} can be derived
by using the estimates $(i)$,  $(ii)$ and Lemma \ref{LMMT_lemma1}. 
\begin{lemma}\label{LMMT_lemma5}
  Suppose $u_h\in \mathbb{B}_h(\cal{S}(\Omega))$.  
 There exist a positive constant $C$, independent of $\alpha$ and  $h_i$, such that
 \begin{align}\label{LMMT_22}
  a_h(u_h,u_h) \geq C \, \|u_h\|_{dG}^2,{\ }\; \forall \,u_h\in \mathbb{B}_h(\cal{S}(\Omega)).
 \end{align}
\end{lemma}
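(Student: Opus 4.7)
The plan is to insert $v_h = u_h$ into $a_h(\cdot,\cdot)$, recognize that the volume and penalty contributions already produce $\|u_h\|_{dG}^2$, and then show that the consistency term $\frac{1}{2}s_i(u_h,u_h)$ can be absorbed into these two positive pieces provided the penalty parameter $\mu$ is chosen large enough.

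First I would write, on setting $v_h = u_h$,
\begin{equation*}
 a_h(u_h,u_h) \;=\; \sum_{i=1}^N \alpha^{(i)}\|\nabla u_h^{(i)}\|_{L^2(\Omega_i)}^2 \;-\; \sum_{i=1}^N s_i(u_h,u_h) \;+\; \sum_{i=1}^N p_i(u_h,u_h),
\end{equation*}
since $s_i(u_h,u_h) = 2\sum_{F_{ij}\subset\partial\Omega_i}\int_{F_{ij}}\{\alpha\nabla u_h\}\cdot\mathbf{n}_{F_{ij}}\,\llbracket u_h\rrbracket\,ds$, so the factor $\tfrac12$ cancels. The first and third sums together are precisely $\|u_h\|_{dG}^2$, so it suffices to estimate the middle sum from above.

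Next I would bound a single face contribution. On each interface $F_{ij}$, using Cauchy--Schwarz together with Young's inequality weighted by the penalty factor $\beta_{ij}:=\alpha^{(i)}/h_i + \alpha^{(j)}/h_j$, I get, for any $\epsilon>0$,
\begin{equation*}
 \Bigl|\int_{F_{ij}} \{\alpha\nabla u_h\}\cdot\mathbf{n}_{F_{ij}}\,\llbracket u_h\rrbracket\,ds\Bigr| \;\leq\; \frac{\epsilon}{2}\,\beta_{ij}^{-1}\,\bigl\|\{\alpha\nabla u_h\}\bigr\|_{L^2(F_{ij})}^2 \;+\; \frac{1}{2\epsilon}\,\beta_{ij}\,\bigl\|\llbracket u_h\rrbracket\bigr\|_{L^2(F_{ij})}^2.
\end{equation*}
The second term is exactly $\tfrac{1}{2\epsilon\mu}$ times the corresponding contribution to $p_i(u_h,u_h)$. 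For the first term I split the average, $\|\{\alpha\nabla u_h\}\|_{L^2(F_{ij})}^2 \leq \tfrac12\bigl((\alpha^{(i)})^2\|\nabla u_h^{(i)}\|_{L^2(F_{ij})}^2 + (\alpha^{(j)})^2\|\nabla u_h^{(j)}\|_{L^2(F_{ij})}^2\bigr)$, and then apply the discrete trace inequality (Lemma \ref{LMMT_lemma3} with $p=2$) separately on each side to turn the face norms of $\nabla u_h$ into scaled volume norms, picking up factors $h_i^{-1}$ and $h_j^{-1}$.

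The key observation --- and the main technical point --- is that the weight $\beta_{ij}^{-1}$ tames the jumping coefficients: $\beta_{ij}^{-1}\,(\alpha^{(i)})^2 h_i^{-1} \leq \alpha^{(i)}$ and likewise $\beta_{ij}^{-1}\,(\alpha^{(j)})^2 h_j^{-1} \leq \alpha^{(j)}$, which is what makes the $\alpha$-weighting in the dG norm the right one and yields a constant independent of the coefficient jumps. Combining these, summing over faces (each interior face appearing in exactly two subdomain sums), I obtain
\begin{equation*}
 \sum_{i=1}^N s_i(u_h,u_h) \;\leq\; C\epsilon\sum_{i=1}^N \alpha^{(i)}\|\nabla u_h^{(i)}\|_{L^2(\Omega_i)}^2 \;+\; \frac{C}{\epsilon\mu}\sum_{i=1}^N p_i(u_h,u_h),
\end{equation*}
with $C$ depending only on the trace constant from Lemma \ref{LMMT_lemma3} and the mesh regularity. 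Finally I would pick $\epsilon$ small enough that $C\epsilon < 1$, and then fix $\mu$ large enough that $C/(\epsilon\mu)<1$; this gives positive coefficients in front of both parts of $\|u_h\|_{dG}^2$ and yields (\ref{LMMT_22}). The obstacle I expect is not analytic but bookkeeping: keeping the $\alpha$-weights straight so that the final constant is independent of the coefficient jumps, and handling the boundary faces (where $\alpha^{(j)}=0$ by convention) consistently inside the same estimate.
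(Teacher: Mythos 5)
Your proposal is correct and follows essentially the same route as the paper's proof: set $v_h=u_h$, bound the consistency term via Cauchy--Schwarz/Young weighted by $\alpha^{(i)}/h_i+\alpha^{(j)}/h_j$ together with the discrete trace and inverse inequalities (Lemmas \ref{LMMT_lemma2}--\ref{LMMT_lemma3}), and absorb it by taking $\varepsilon$ small and $\mu$ sufficiently large; your explicit remark that $\beta_{ij}^{-1}(\alpha^{(i)})^2h_i^{-1}\le\alpha^{(i)}$ is exactly what makes the constant independent of $\alpha$, as claimed. The only blemish is a harmless factor-of-two slip in your first display (the middle term should be $\tfrac12\sum_i s_i(u_h,u_h)$, which equals twice the sum over the set of interior faces), which does not affect the argument since $\varepsilon$ and $\mu$ are chosen at the end.
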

\begin{proof}
 By (\ref{LMMT_8a}), we have that
 \begin{align}\label{LMMT_24}
 \nonumber
  a_h(u_h,u_h)  =  \sum_{i=1}^N  a_i(u_h,u_h)-\frac{1}{2}
                   \sum_{i=1}^N s_i(u_h,u_h)+p_i(u_h,u_h) =\\
  \nonumber
                   \sum_{i=1}^N \alpha_i\|\nabla u_h\|^2_{L^2(\Omega_i)} - 
                  2 \sum_{F_{ij}\in \mathcal{F}} \int_{F_{ij}} \{\alpha\nabla u_h\}\cdot \mathbf{n}_{F_{ij}} \llbracket u_h\rrbracket \,ds \\
                 +  \sum_{F_{ij}\in \mathcal{F}} \mu\Big(\frac{\alpha^{(i)}}{h_i}+\frac{\alpha^{(j)}}{h_j}\Big)\|\llbracket u_h\rrbracket\|^2_{L^2(F_{ij})}.
 \end{align}
For the second term on the right hand side,  Lemma \ref{LMMT_lemma2} and 
the trace inequality (\ref{LMMT_18})
expressed on $F_{ij}\in \mathcal{F}$ yield the bound
\begin{multline}\label{LMMT_25}
  -\sum_{F_{ij}\in \mathcal{F}} \int_{F_{ij}} \{\alpha\nabla u_h\}\cdot \mathbf{n}_{F_{ij}} \llbracket u_h \rrbracket \,ds \geq \\
  \hspace*{-10mm}{
 - C_{1,\varepsilon}\sum_{i=1}^N \alpha_i\|\nabla u_h\|^2_{L^2(\Omega_i)} - 
    \sum_{F_{ij}\in \mathcal{F}}\frac{1}{C_{2,\varepsilon}}\Big(\frac{\alpha^{(i)}}{h_i}+\frac{\alpha^{(j)}}{h_j}\Big)\|\llbracket u_h \rrbracket\|^2_{L^2(F_{ij})}.
}
   \end{multline}
Inserting (\ref{LMMT_25}) into (\ref{LMMT_24}) 
and choosing $  C_{1,\varepsilon} < 1/2$ and $\mu > 2/C_{2,\varepsilon}$,
we obtain (\ref{LMMT_22}). \hfill$\Square$
\end{proof}
Due to assumed regularity of the solution,
the normal interface fluxes  $(\alpha \nabla u)|_{\Omega_i}\cdot \mathbf{n}_{F_{ij}}$ belongs (in general)  to $L^p(F_{ij})$.
The following   bound for the interface fluxes in $\|.\|_{L^p}$ setting has been shown in \cite{LMMT_LangerToulopoulos:2014a}.
\begin{lemma}\label{LMMT_lemma5.1}
 There is a constant $C$ such that the following inequality for 
 $(u,v_h)\in W_h^{l,p}\times \mathbb{B}_h(\cal{S}(\Omega))$ 
 holds true
 \begin{align}\label{LMMT_5.2}
  \sum_{i=1}^N\sum_{F_{ij}\subset \partial \Omega_i}\int_{F_{ij}}\{\alpha\nabla u\}\cdot \mathbf{n}_{F_{ij}}\llbracket v_h \rrbracket \,ds \leq& \\ 
  \nonumber
   C\Big(\sum_{F_{ij}\in \mathcal{F}}
   \alpha^{(i)}h_i^{1+\gamma_{p,d}}\| \nabla u^{(i)}\|^p_{L^p(F_{ij})}+&
   \alpha^{(j)} h_j^{1+\gamma_{p,d}}\| \nabla u^{(j)}\|^p_{L^p(F_{ij})}\Big)^{\frac{1}{p}} \|v_h\|_{dG},\\
  \nonumber 
   \text{where}{\ }\gamma_{p,d} = d(p-2)/2.\hskip 2cm\qquad&
 \end{align}
\end{lemma}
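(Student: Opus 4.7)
The plan is to control the face integral on each $F=F_{ij}$ by Hölder's inequality with conjugate exponents $p$ and $q=p/(p-1)$, then to use an inverse estimate (in the spirit of Lemma~\ref{LMMT_lemma4_a}, applied on the face) on the test-function side to convert the $L^q$-norm of the jump into an $L^2$-norm that can be absorbed into the dG stabilization part of $\|v_h\|_{dG}$, and finally to sum over faces using Cauchy-Schwarz together with the elementary embedding $\|x\|_{\ell^2}\le\|x\|_{\ell^p}$ which is valid for non-negative finite sequences whenever $p\le 2$. Splitting the average as $\{\alpha\nabla u\}=\tfrac12(\alpha^{(i)}\nabla u^{(i)}+\alpha^{(j)}\nabla u^{(j)})$ and treating each contribution independently, the face-wise Hölder step produces
$$
\int_F \alpha^{(i)}\nabla u^{(i)}\cdot\mathbf{n}_{F_{ij}}\,\llbracket v_h\rrbracket\,ds
\le \alpha^{(i)}\|\nabla u^{(i)}\|_{L^p(F)}\,\|\llbracket v_h\rrbracket\|_{L^q(F)},
$$
with the analogous inequality on the $j$-side.

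Since $v_h|_F$ is a polynomial trace on a quasi-uniform B-spline mesh, an inverse estimate on the $(d{-}1)$-dimensional face (using $\underline h_F:=\min(h_i,h_j)$ to accommodate the non-matching meshes) yields
$\|\llbracket v_h\rrbracket\|_{L^q(F)}\le C\,\underline h_F^{-(d-1)(1/2-1/q)}\|\llbracket v_h\rrbracket\|_{L^2(F)}$, where $1/2-1/q=(2-p)/(2p)$. Writing $\alpha^{(i)}=(\alpha^{(i)})^{1/p}(\alpha^{(i)})^{1/q}$ and redistributing powers of $h_i$, a short arithmetic check using $\gamma_{p,d}=d(p-2)/2$ confirms the factorization
$$
\alpha^{(i)}\,h_i^{-(d-1)(1/2-1/q)}
= \bigl((\alpha^{(i)})^{1/p}\,h_i^{(1+\gamma_{p,d})/p}\bigr)\cdot\bigl((\alpha^{(i)})^{1/q}\,h_i^{-1/2}\bigr).
$$
Setting $A_F^{(i)}:=(\alpha^{(i)})^{1/p} h_i^{(1+\gamma_{p,d})/p}\|\nabla u^{(i)}\|_{L^p(F)}$ and $B_F^{(i)}:=(\alpha^{(i)})^{1/q} h_i^{-1/2}\|\llbracket v_h\rrbracket\|_{L^2(F)}$, the face integral is bounded by $C\,A_F^{(i)}\,B_F^{(i)}$.

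Summing over $F\in\mathcal{F}$ and applying Cauchy-Schwarz, the inequality $(\sum_F (A_F^{(i)})^2)^{1/2}\le(\sum_F (A_F^{(i)})^p)^{1/p}$ (valid for $p\le 2$) recovers exactly the desired $\bigl(\sum_F\alpha^{(i)} h_i^{1+\gamma_{p,d}}\|\nabla u^{(i)}\|_{L^p(F)}^p\bigr)^{1/p}$, while $(B_F^{(i)})^2=(\alpha^{(i)})^{2/q}h_i^{-1}\|\llbracket v_h\rrbracket\|_{L^2(F)}^2\le C\bigl(\alpha^{(i)}/h_i+\alpha^{(j)}/h_j\bigr)\|\llbracket v_h\rrbracket\|_{L^2(F)}^2$ sums to $C\|v_h\|_{dG}^2$ by the definition~(\ref{LMMT_20}); here the constant absorbs the factor $(\alpha^{(i)})^{(p-2)/p}=(\alpha^{(i)})^{2/q-1}$ by means of the positive lower bound on $\alpha$. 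Handling the $j$-contribution in the same fashion and adding the two estimates completes the proof. The main technical obstacle is the exponent arithmetic: one has to verify that combining the inverse-inequality exponent $-(d-1)(1/2-1/q)$, the Hölder redistribution of $\alpha^{(i)}$ into $(\alpha^{(i)})^{1/p}(\alpha^{(i)})^{1/q}$, and the dG weight $h^{-1/2}$ produces precisely $h^{(1+\gamma_{p,d})/p}$ with $\gamma_{p,d}=d(p-2)/2$. A secondary subtlety is the justification of the inverse estimate across the non-matching interface, which is dealt with by taking $\underline h_F=\min(h_i,h_j)$ or, equivalently, by splitting $\llbracket v_h\rrbracket=v_h^{(i)}-v_h^{(j)}$ into its two one-sided polynomial traces and invoking Lemma~\ref{LMMT_lemma4_a} on each side separately.
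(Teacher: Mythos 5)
Your argument follows the same route as the paper's (one-line) proof: H\"older's inequality with exponents $p$ and $q=p/(p-1)$ on each face, followed by trace/inverse estimates in the spirit of Lemmata~\ref{LMMT_lemma3} and~\ref{LMMT_lemma4_a} to trade the $L^q$-norm of the jump for the weighted $L^2$ jump term in $\|\cdot\|_{dG}$, and a discrete Cauchy--Schwarz combined with $\ell^p\hookrightarrow\ell^2$ for $p\le 2$ over the faces; your exponent bookkeeping ($(d-1)(p-2)/(2p)=(1+\gamma_{p,d})/p-1/2$) is correct. The only point deserving more care than your sketch gives it is the $L^q$--$L^2$ inverse estimate applied to the jump $\llbracket v_h\rrbracket$ itself, which on genuinely non-matching face meshes lives on the overlay partition rather than on either single-patch trace mesh; the paper defers exactly this technicality to \cite{LMMT_LangerToulopoulos:2014a}, so your treatment is at the same level of rigor as the text you were asked to match.
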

\begin{proof}
	We use  H\"older's inequality and then the results of 
	Lemmas \ref{LMMT_lemma3} and  \ref{LMMT_lemma4_a}. \hfill$\Square$
\end{proof}

Applying similar procedure as this  in  Lemma \ref{LMMT_lemma5.1}, 
we can show for the symmetrizing terms that there is
a positive constant independent of grid size such that
 \begin{align}\label{LMMT_5.2_a}
\sum_{i=1}^N\sum_{F_{ij}\subset \partial \Omega_i}\int_{F_{ij}}\{\alpha\nabla v_h\}\cdot \mathbf{n}_{F_{ij}} \llbracket u \rrbracket\,ds 
\leq C_1 \|v_h\|_{dG} \|u\|_{dG}. 
\end{align}
Using the results (\ref{LMMT_5.2}) and (\ref{LMMT_5.2_a}),  we can  show the boundedness of the 
bilinear form, see details in \cite{LMMT_LangerToulopoulos:2014a}.
\begin{lemma}\label{LMMT_lemma5.2}
 There is  a $C$ independent of $h_i$ such that for $ (u,v_h)\in W^{l,p}_h\times \mathbb{B}_h(\cal{S}(\Omega))$
 \begin{align}\label{LMMT_5.8}
  a_h(u,v_h)\leq C (\|u\|_{dG}^p + \sum_{F_{ij}\in \mathcal{F}}
   h_i^{1+\gamma_{p,d}}\alpha^{(i)}\| \nabla u^{(i)}\|^p_{L^p(F_{ij})}+ \\
   \nonumber
   h_j^{1+\gamma_{p,d}}\alpha^{(j)}\| \nabla u^{(j)}\|^p_{L^p(F_{ij})}\Big)^{\frac{1}{p}} \|v_h\|_{dG} .
 \end{align}
\end{lemma}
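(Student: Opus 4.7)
The plan is to decompose $a_h(u,v_h)$ into its four natural pieces---the volume form $\sum_i a_i(u,v_h)$, the consistency flux $\sum_i\sum_{F_{ij}}\int_{F_{ij}}\{\alpha\nabla u\}\cdot\mathbf{n}_{F_{ij}}\llbracket v_h\rrbracket\,ds$, the symmetrizing flux with the gradient on $v_h$, and the penalty term $\sum_i p_i(u,v_h)$---and to bound each piece so that $\|v_h\|_{dG}$ factors out cleanly on the right while the remaining $u$-dependent factor lands inside the required $p$-th root expression.

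For the volume term I would apply Cauchy--Schwarz in $L^2(\Omega_i)$ element by element and then the discrete Cauchy--Schwarz over subdomains, producing $\sum_i a_i(u,v_h)\leq\|u\|_{dG}\|v_h\|_{dG}$ after absorbing the factor $\alpha^{(i)}$ into both pieces. The penalty contribution $\sum_i p_i(u,v_h)$ admits the same treatment, since it is Cauchy--Schwarz in the weighted face inner product induced by $\mu(\alpha^{(i)}/h_i+\alpha^{(j)}/h_j)$, so it again yields $\|u\|_{dG}\|v_h\|_{dG}$. The symmetrizing flux is exactly the object appearing in (\ref{LMMT_5.2_a}), which immediately gives a bound of $C\|v_h\|_{dG}\|u\|_{dG}$. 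The remaining consistency flux is precisely what Lemma \ref{LMMT_lemma5.1} estimates by $C\big(\sum_{F_{ij}}\alpha^{(i)}h_i^{1+\gamma_{p,d}}\|\nabla u^{(i)}\|_{L^p(F_{ij})}^p+\alpha^{(j)}h_j^{1+\gamma_{p,d}}\|\nabla u^{(j)}\|_{L^p(F_{ij})}^p\big)^{1/p}\|v_h\|_{dG}$, and this can be invoked as a black box.

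Collecting the four contributions and pulling out $\|v_h\|_{dG}$, the three pieces that produced a factor of $\|u\|_{dG}$ are absorbed into the desired right-hand side via the elementary inequality $\|u\|_{dG}\leq\big(\|u\|_{dG}^p+\text{flux terms}\big)^{1/p}$, valid because the flux contribution is nonnegative; only the implicit constant $C$ must be enlarged. The main obstacle, already resolved inside Lemma \ref{LMMT_lemma5.1} and here only cited, is the asymmetric treatment of the consistency flux in the low-regularity regime $p\in(\max\{1,2d/(d+2(l-1))\},2]$: a direct Cauchy--Schwarz on the faces is unavailable, so one must combine H\"older's inequality with the trace and inverse estimates of Lemmas \ref{LMMT_lemma3} and \ref{LMMT_lemma4_a}, which is precisely what produces the characteristic weight $h_i^{1+\gamma_{p,d}}$ with $\gamma_{p,d}=d(p-2)/2$ and forces the $p$-th root structure of the final bound.
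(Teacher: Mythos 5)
Your decomposition of $a_h(u,v_h)$ into volume, consistency-flux, symmetrizing-flux and penalty parts, with Cauchy--Schwarz for the first and last, estimate (\ref{LMMT_5.2_a}) for the symmetrizing term, and Lemma~\ref{LMMT_lemma5.1} for the consistency flux, is exactly the route the paper takes (its proof is a one-line appeal to (\ref{LMMT_5.2}) and (\ref{LMMT_5.2_a}), with details deferred to the cited reference). Your final absorption step via $\|u\|_{dG}\leq\big(\|u\|_{dG}^p+S\big)^{1/p}$ for $S\geq 0$ is valid, so the argument is correct and essentially identical to the paper's.
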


Next,  we give the main error estimate for the dG IgA method. 
\begin{theorem}
\label{LMMT_:sec:2:th1:dGnormErrorEstimate}
 Let $u\in W^{1,2}(\Omega) \cap W^{l,p}(\mathcal{T}_H(\Omega))$ 
 be the solution of (\ref{LMMT_sec:2:eqn:Model1-VF}), 
 with $l \geq 2$ and $p \in (\max\{1,\frac{2d}{d+2(l-1)}\},2]$.
 Let $u_h\in \mathbb{B}_h(\cal{S}(\Omega))$ be the dG IgA solution of (\ref{LMMT_8a}).  
Then there are 
 $C_i:=C_i\Big(\max_{l_0 \leq l} \Big(\|D^{l_0}{\Phi}_i\|_{L^{\infty}(E)}\Big),
 \|u\|_{W^{l,p}(\Omega_i)}  \Big)$, such that
 \begin{align}\label{LMMT_5.24}
   \|u-u_h \|_{dG} \leq &  \sum_{i=1}^N\Big( C_i\Big( h_i^{\delta(l,p,d)} +
 \sum_{F_{ij}\subset \partial \Omega_i}\alpha^{(j)}\frac{h_i}{h_j}  
  h_i^{\delta(l,p,d)}\Big)\|u\|_{W^{l,p}(\Omega_i)}\Big),
 \end{align}
 where $\delta(l,p,d)= l+(d/2-d/p-1)$.
\end{theorem}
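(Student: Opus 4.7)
The plan is to run a standard Strang-type argument tailored to the $L^p$/$L^2$ mismatch present in the continuity and interpolation estimates. I would start with the triangle inequality
\begin{equation*}
\|u-u_h\|_{dG} \le \|u-\Pi_h u\|_{dG} + \|\Pi_h u - u_h\|_{dG},
\end{equation*}
noting that the first term is directly controlled by estimate $(iii)$ of Lemma \ref{LMMT_lemma5.3}, which already has exactly the structure appearing on the right-hand side of \eqref{LMMT_5.24} (including the $\alpha^{(j)} h_i/h_j$ weighting from the penalty piece of the dG norm).

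For the second term, I would set $\phi_h := \Pi_h u - u_h \in \mathbb{B}_h(\mathcal{T}_H(\Omega))$ and apply coercivity (Lemma \ref{LMMT_lemma5}) to get $C\|\phi_h\|_{dG}^2 \le a_h(\phi_h,\phi_h)$. The consistency of the SIPG scheme \eqref{LMMT_8a}, which follows because $u$ is sufficiently regular (so the interface fluxes $\{\alpha\nabla u\}\cdot\mathbf{n}_{F_{ij}}$ make sense) and $\llbracket u\rrbracket = 0$ on interior faces with $u=u_D$ on the boundary, yields Galerkin orthogonality $a_h(u-u_h,\phi_h)=0$. Hence $a_h(\phi_h,\phi_h) = a_h(\Pi_h u - u,\phi_h)$, and the boundedness result in Lemma \ref{LMMT_lemma5.2} gives
\begin{equation*}
a_h(\Pi_h u - u,\phi_h) \le C\Bigl(\|u-\Pi_h u\|_{dG}^p + \sum_{F_{ij}\in\mathcal{F}} h_i^{1+\gamma_{p,d}}\alpha^{(i)}\|\nabla(u-\Pi_h u)^{(i)}\|_{L^p(F_{ij})}^p + h_j^{1+\gamma_{p,d}}\alpha^{(j)}\|\nabla(u-\Pi_h u)^{(j)}\|_{L^p(F_{ij})}^p\Bigr)^{\!1/p}\|\phi_h\|_{dG},
\end{equation*}
after which I divide through by $\|\phi_h\|_{dG}$.

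The remaining task is to verify that the flux quantities on the right-hand side produce the rate $h_i^{\delta(l,p,d)}$. Applying Lemma \ref{LMMT_lemma5.3}$(i)$ with $\beta = 1+\gamma_{p,d}$ gives the bound $C_i h_i^{p(l-1)+\gamma_{p,d}}$, and since $\gamma_{p,d} = d(p-2)/2$, taking the $p$-th root yields exactly $h_i^{(l-1) + d/2 - d/p} = h_i^{\delta(l,p,d)}$, which matches the interpolation rate in Lemma \ref{LMMT_lemma5.3}$(iii)$. Combining this with the dG-norm interpolation bound and feeding back through the triangle inequality produces \eqref{LMMT_5.24}. The main technical obstacle I anticipate is bookkeeping: the continuity estimate mixes an $L^p$-flux contribution with an $L^2$-type dG norm, so care is needed when taking $p$-th roots to ensure the face terms from Lemma \ref{LMMT_lemma5.3}$(i)$ align precisely with the exponent $\delta(l,p,d)$, and that the $\alpha^{(j)} h_i/h_j$ weights generated by the penalty part of $\|\cdot\|_{dG}$ (via estimate $(ii)$) appear with the correct powers of $\alpha$ on both sides of each interface. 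Beyond this, the argument is essentially a direct assembly of Lemmas \ref{LMMT_lemma5}, \ref{LMMT_lemma5.2}, and \ref{LMMT_lemma5.3}.
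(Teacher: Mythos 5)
Your proposal is correct and follows essentially the same route as the paper: consistency of the exact solution giving Galerkin orthogonality, a Cea-type argument combining the coercivity of Lemma \ref{LMMT_lemma5} with the boundedness of Lemma \ref{LMMT_lemma5.2}, and the quasi-interpolation estimates of Lemma \ref{LMMT_lemma5.3} to bound the resulting terms. Your exponent bookkeeping (applying estimate $(i)$ with $\beta = 1+\gamma_{p,d}$ and taking the $p$-th root to recover $h_i^{\delta(l,p,d)}$) is also consistent with the stated rate.
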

\begin{proof}
  First we need to prove  the consistency of $u$, i.e.  $u$ satisfies (\ref{LMMT_8a}). 
   Then, we use a variation of Cea's Lemma (expressed in the dG framework), the results
   of Lemmata \ref{LMMT_lemma5} and \ref{LMMT_lemma5.2}, 
   as well the quasi-interpolation estimates of Lemma \ref{LMMT_lemma5.3}.
   A complete proof can be found in \cite{LMMT_LangerToulopoulos:2014a}. 
   $\Square$
\end{proof}

%
\subsection{Numerical Examples}
\label{LMMT_subsec:2.4:NumericalExamples}
In this section, we present a series of numerical 
examples to validate the theoretical results, which have been presented. 

\subsubsection{Smooth and Low-regularity Solutions}
We restrict ourselves to a model problem in $\Omega=(-1,1)^3$, 
with $\Gamma_D=\partial \Omega$. 
The domain $\Omega$ is subdivided into four equal subdomains $\Omega_i$, $i=1,...,4$,
 where for simplicity every $\Omega_i$ is initially partitioned into a mesh 
 $T^{(i)}_{h_i,\Omega_i}$,  with  $h:=h_i=h_j, i\neq j$, and $i,j=1,...,4$.
 Successive uniform refinements are performed 
 on every $T^{(i)}_{h_i,\Omega_i}$ in order to compute numerically
 the convergence rates. 
We set the diffusion coefficient equal to one.
\par
In the first test, 
the data $u_D$ and $f$ in (\ref{LMMT_sec:1:eqn:Model1}) are determined such that the exact solution  is 
$u(x)=\sin(2.5\pi x)\sin(2.5\pi y)\sin(2.5\pi z)$ (smooth test case).
The first two columns of Table \ref{LMMT_table_1} display the convergence rates. 
As it was expected, the convergence rates are optimal.  
In the second case, the exact solution is 
$u(x)=|x|^{\lambda}$. The parameter $\lambda$ is chosen such that $u\in W^{l,p=1.4}(\Omega)$.
Specifically, for $l=2$, we set $\lambda=0.58$ and for $l=3$, we set $\lambda=1.58$. 
In the  last columns of Table \ref{LMMT_table_1},
we display the convergence rates for degree $k=2$ and $k=3$ in case of having  $l=2$ and $l=3$.
We observe that, for each of the two different tests, the error in the 
dG-norm behaves according to the main error estimate given by (\ref{LMMT_5.24}).

\begin{table}[th!]
\centering %
\caption{The numerical convergence rates of the dG IgA method.}
\label{LMMT_table_1}
\begin{tabular}{|c| c|c|c|c|c|c|} 
\hline 
          &\multicolumn{2}{|c|}{highly smooth }
          &\multicolumn{2}{|c|}{$k =2$ } &\multicolumn{2}{|c|}{$k =3$ }\\ [0.5ex] \hline 
$\frac{h}{2^s}$    &$k=2$& $k=3$&  $l=2$ & $l=3$        &  $l=2$ & $l=3$  \\ \hline 
           \multicolumn{7}{|c|} {Convergence rates } \\ [0.5ex] \hline
 $s=0$  &-    & -     & -     &-      & -      & -  \\ 
 $s=1$  &0.15 &2.91   & 0.62  &0.76   & 0.24   &1.64    \\ 
 $s=2$  &2.34 &2.42   & 0.29  &1.10   & 0.28   &0.89    \\ 
 $s=3$  &2.08 &3.14   & 0.35  &1.32   & 0.47   &1.25    \\ 
 $s=4$  &2.02 &3.04   & 0.35  &1.36   & 0.36   &1.37    \\ \hline
\end{tabular}
\end{table}


\subsubsection{Non-Matching Meshes} 
\label{LMMT_subsubsec:3.x.y:NonMatching}
We consider the  boundary value problem (\ref{LMMT_sec:1:eqn:Model1})
with exact solution $u(x,y)= \sin (\pi x)$\\$ \sin(\pi y)$. 
The computational domain consist of two unit square patches, 
$\Omega_1=(-1,0)\times(0,1)$ and $\Omega_2=(0,1)\times (0,1)$, 
see Fig.~\ref{LMMT_fig:sec:SquareNonMatching} (left).
The knot vectors representing the geometry are given by
$\Xi_1^{(1,2)}=\Xi_2^{(1,2)}= \{0, 0, 1, 1\}$.  
We refine the mesh of the patches to a ratio $R$, 
i.e.,
the ratio of the
grid sizes is 
$R={h_1}/{h_2}$, 
see Fig.~\ref{LMMT_fig:sec:SquareNonMatching} (left).
We solved the problem using equal B-spline degree $k$ on all patches. We plot in  
 Fig.~\ref{LMMT_fig:sec:SquareNonMatching} (right) the dG IgA solution $u_h$ computed with $k=2$.
 In Fig.~\ref{LMMT_fig:sec:ErrorSquareNonMatching},
we present the decay  of the $L_2$ and $dG$ errors
for $k=1,2,3$ and ratios $R$ from 1 up to 5.
In Table \ref{LMMT_tab:sec:Square2PatchesRefine40}, we
 display the convergence rates for large ratio $R=40$. 
In both cases, we observe that the rates are the expected 
 one and are not affected by the different grid sizes of the meshes.
\begin{figure}[th!]
 \centering
  \includegraphics[width=0.49\textwidth, height = 0.20\textheight]{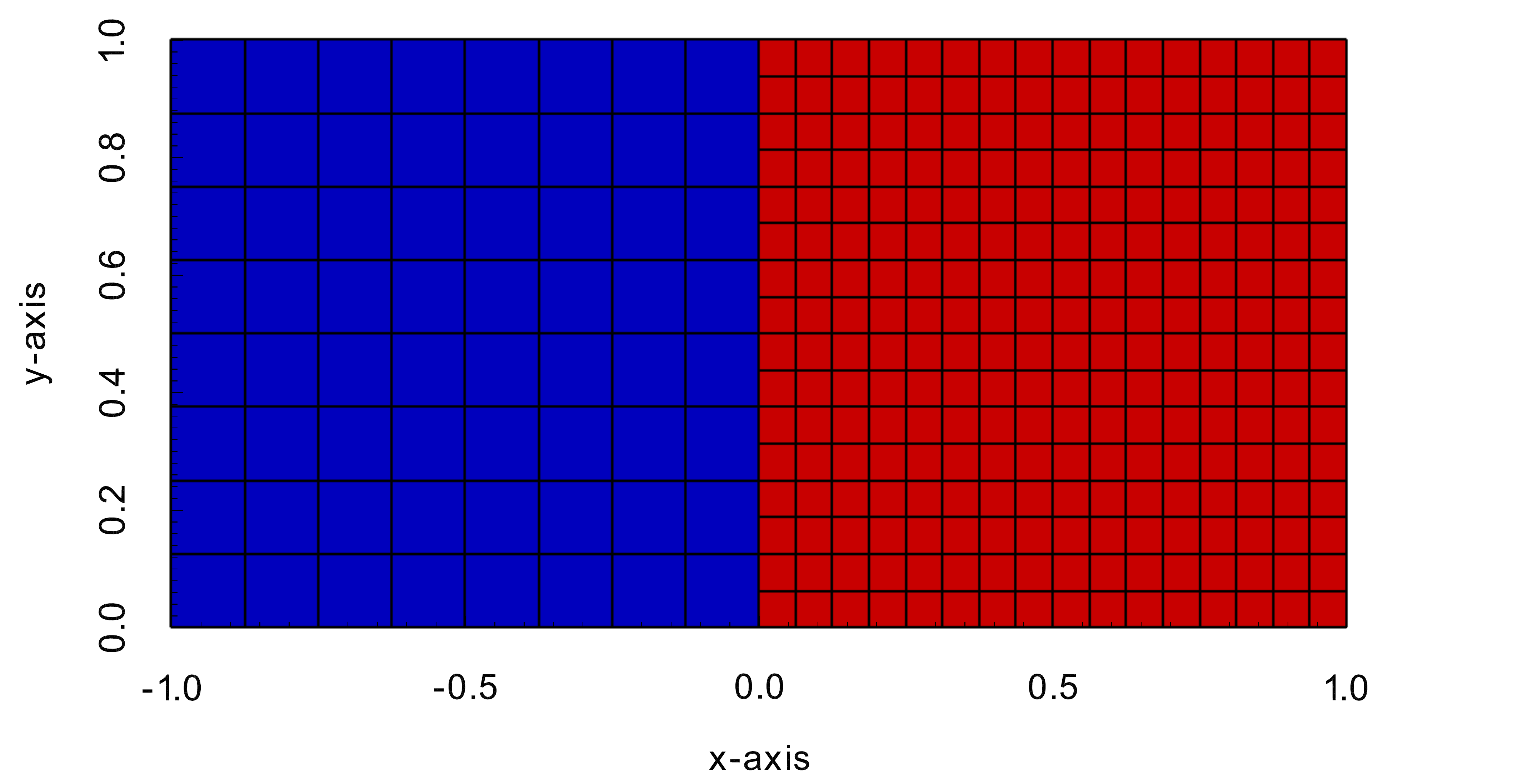}
  \includegraphics[width=0.49\textwidth, height = 0.20\textheight]{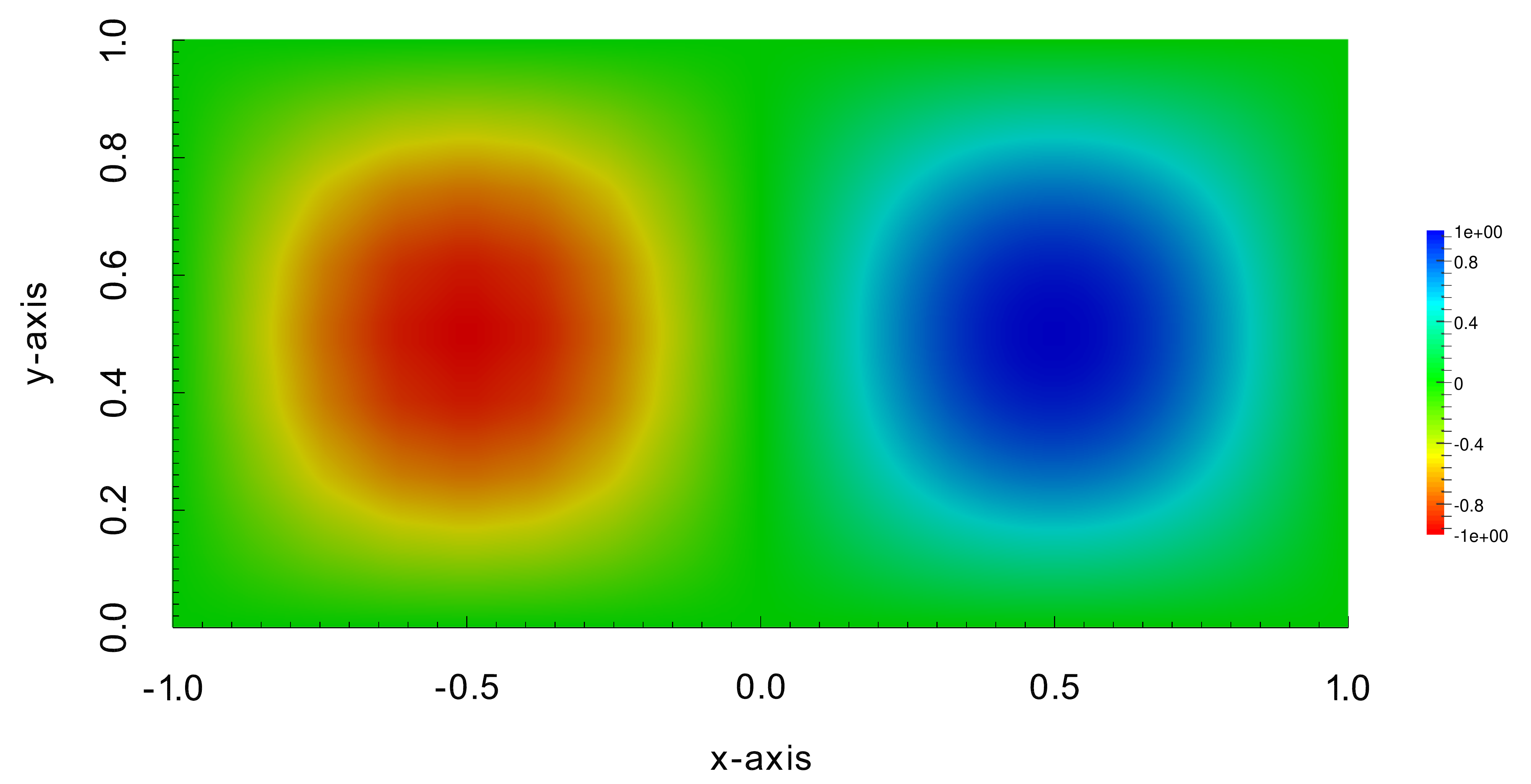}
  \caption{Non-matching meshes: Decomposition into 2 patches with underlying grid 
           of ratio $R = 2$ (left), contours  of $u_h$ (right).
	  }
  \label{LMMT_fig:sec:SquareNonMatching}
\end{figure} 
%
\begin{figure}[th!]
 \centering
  \includegraphics[width=0.49\textwidth, height = 0.22\textheight]{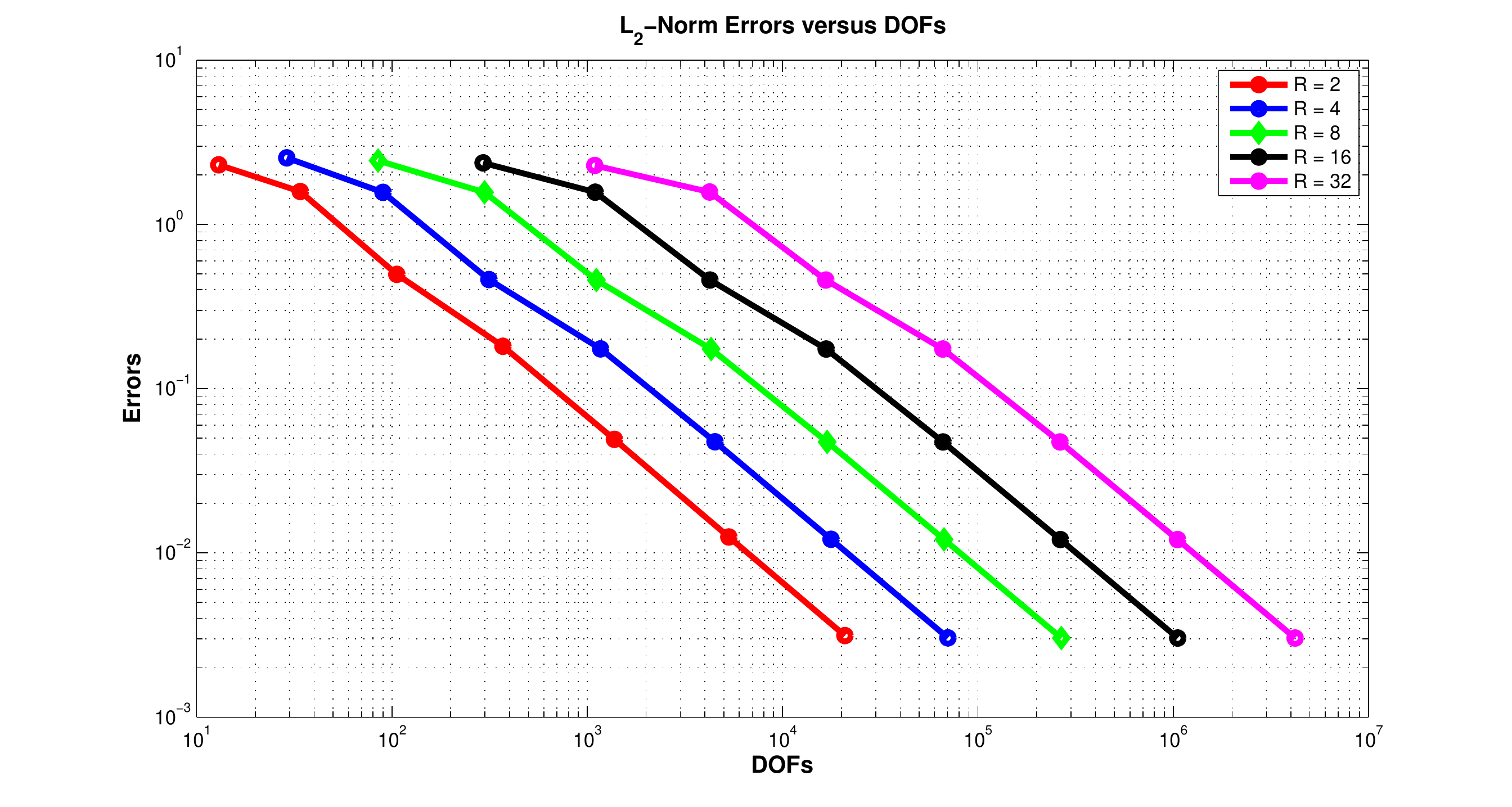}
  \includegraphics[width=0.49\textwidth, height = 0.22\textheight]{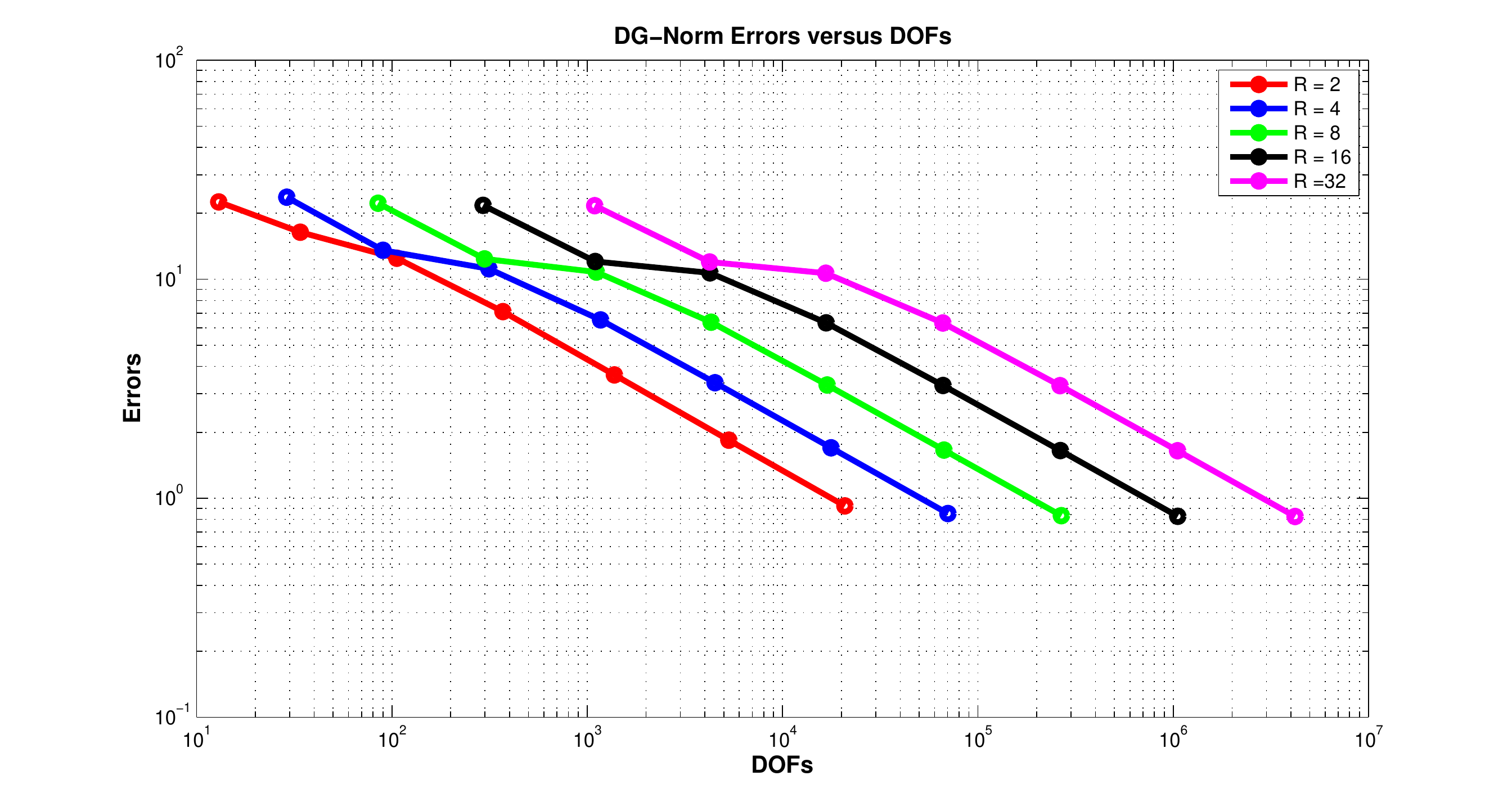}
  \includegraphics[width=0.49\textwidth, height = 0.22\textheight]{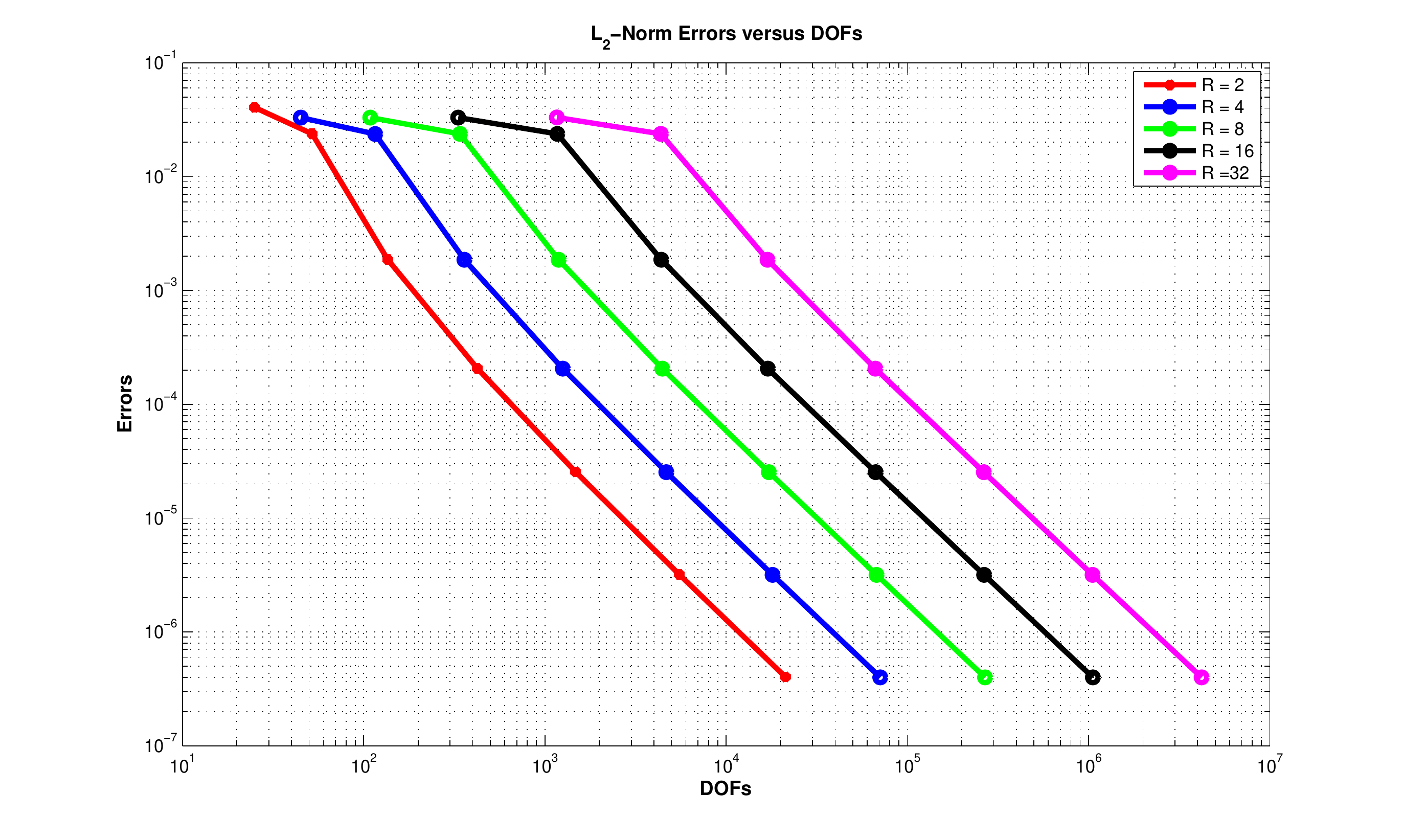}
  \includegraphics[width=0.49\textwidth, height = 0.22\textheight]{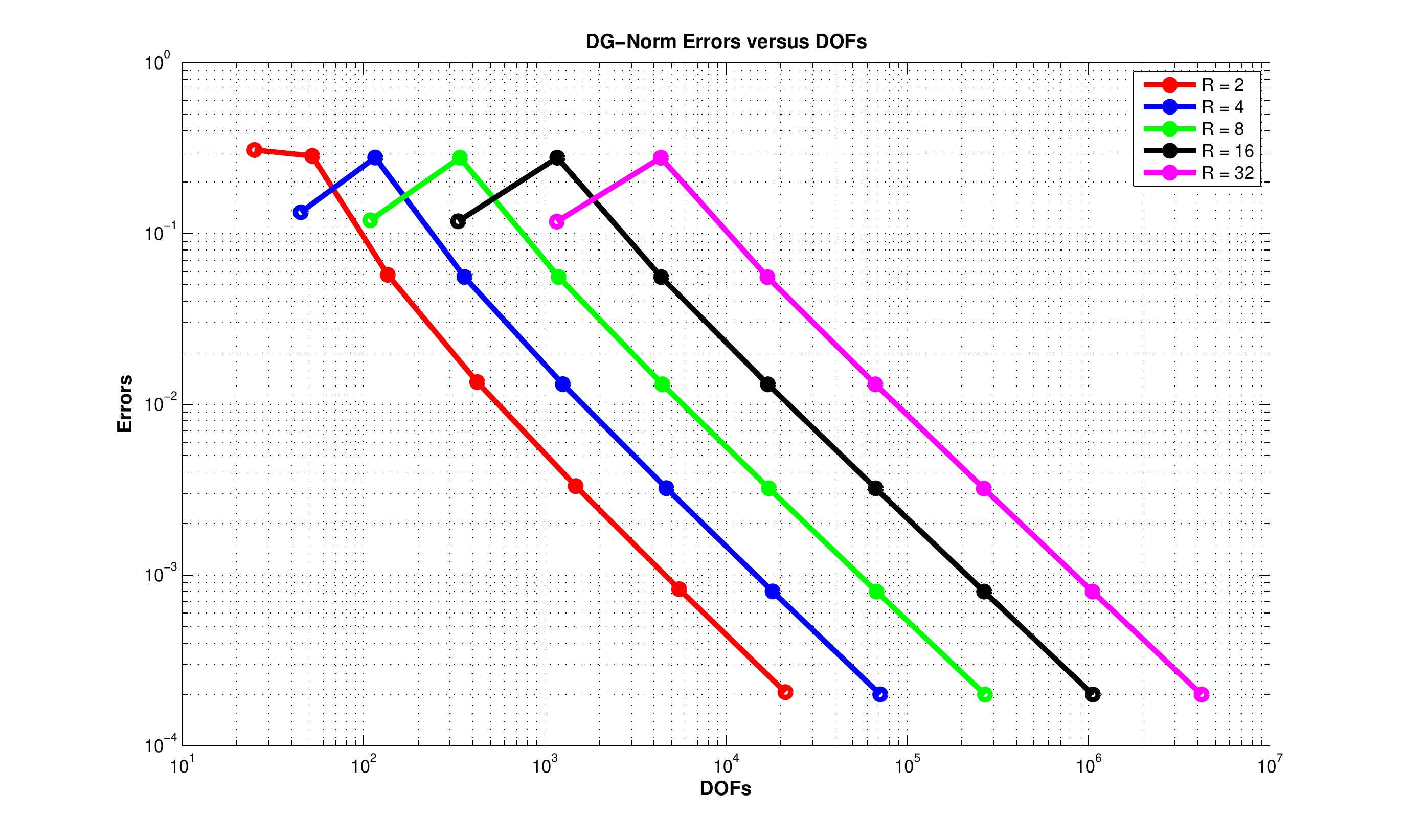}
  \includegraphics[width=0.49\textwidth, height = 0.22\textheight]{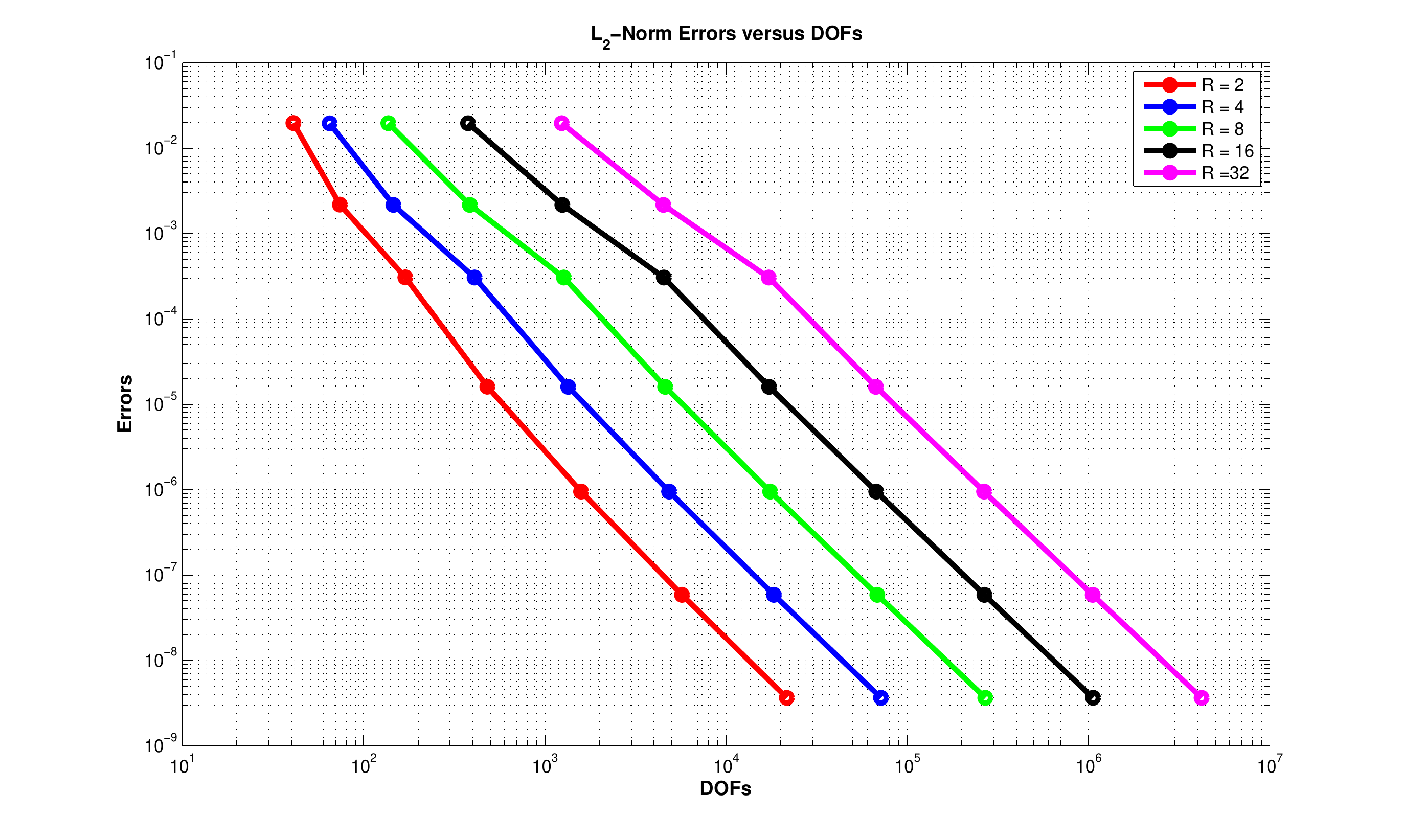}
  \includegraphics[width=0.49\textwidth, height = 0.22\textheight]{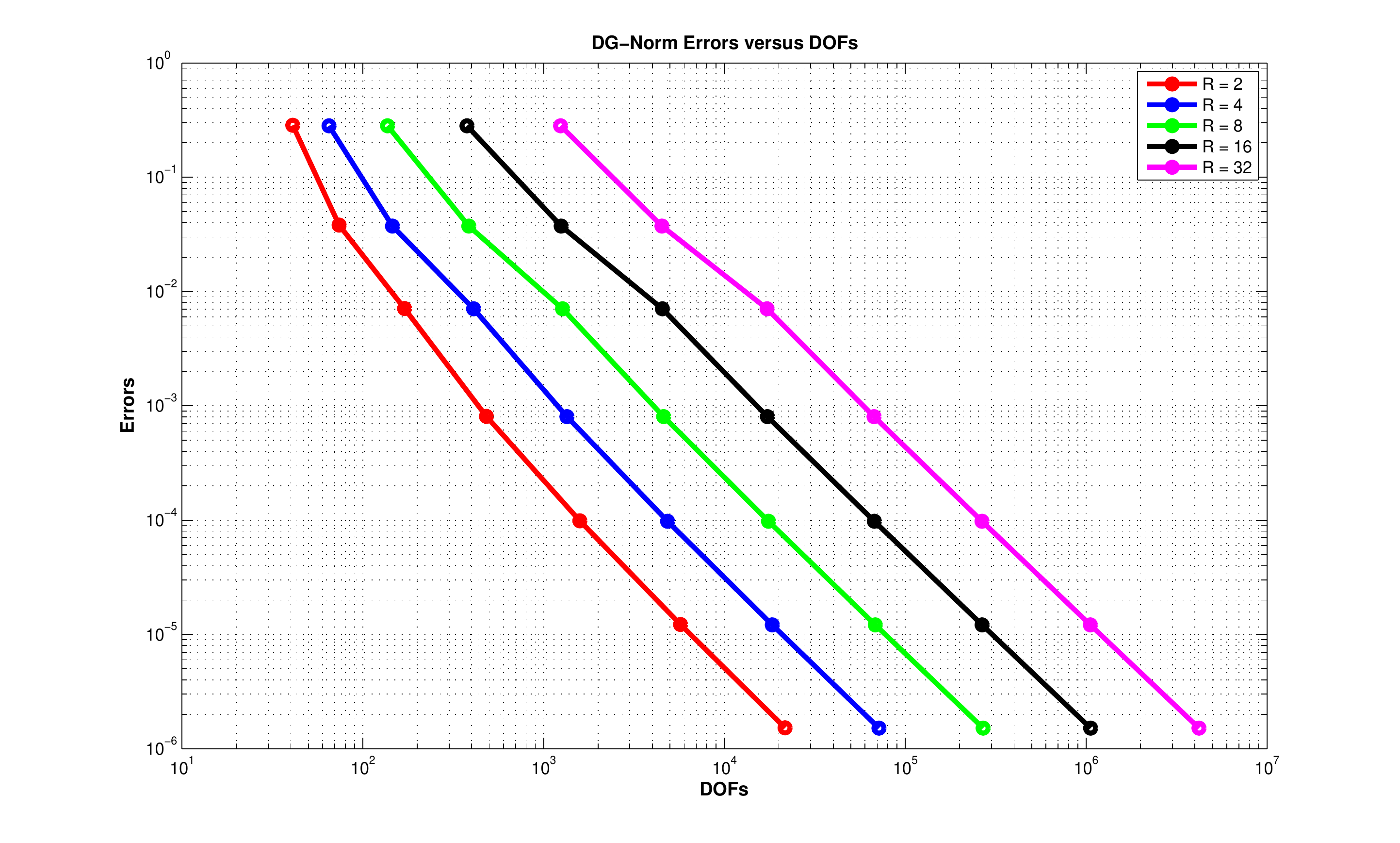}
  \caption{Error decay in the $L_2$ (left) and dG (right) norms for polynomial 
		  degree $k = 1,2,3$ (top to bottom) and ratio $R = 2^r$ with $r$ running from 1 to 5.
	  }
	 \label{LMMT_fig:sec:ErrorSquareNonMatching}  
\end{figure} 
\begin{table}[th!]
 \centering  
 \caption{Non-Matching Meshes: error estimates for degrees $k = 1,2,3$ and $R = 40.$}
   \label{LMMT_tab:sec:Square2PatchesRefine40}
  \begin{tabular}{|l|l|l|l|l|l|l|l|}
   \hline
     \cline{2-5}   
       Dofs                &$L_{2}$ error  &conv. rate   &DG error     &conv. rate  \\  \hline\hline
       \multicolumn{5}{|c|}{Degree $k=1$} \\ \hline
       1685                &0.31667       &0         &2.1596          &0	\\
       6570                &0.0867936     &1.86732   &0.97857         &1.14202	\\
      25946                &0.0249492     &1.79859   &0.49992         &0.968979	\\
     103122                &0.00638919    &1.96529   &0.251392        &0.991758	\\
     411170                &0.00160452    &1.99349   &0.125887        &0.997804	\\
1.64205$\times 10^{6}$     &0.000401493   &1.9987    &0.0629683       &0.999435	\\
6.56295$\times 10^{6}$     &0.000100393   &1.99972   &0.0314873       &0.999857	\\  \hline
       \multicolumn{5}{|c|}{Degree $k=2$} \\ \hline
       1773                   &0.0330803                    &0         &0.117511        &0           \\
       6740                   &0.0237587                    &0.477516  &  0.278058      &-1.2426     \\
      26280                   &0.00186298                   &3.67278   &0.0555191       &2.32433     \\
     103784                   &0.000205886                  &3.17769   &0.0130802       &2.0856      \\
     412488      	      &2.53126$\times 10^{-5}$      &3.02392   &0.00321704      &2.02358     \\
1.64468$\times 10^{6}$        &3.17795$\times 10^{-6}$      &2.99369   &0.000800278     &2.00716     \\
 6.5682$\times 10^{6}$        &3.99389$\times 10^{-7}$      &2.99223   &0.000199727     &2.00247     \\ \hline

       \multicolumn{5}{|c|}{Degree $k=3$} \\ \hline
       1865                &0.0196108                   &     0    &0.281761                     &   0     \\
       6914                &0.00216825                  &3.17704   &0.037403                     &2.91324  \\
      26618                &0.00030589                  &2.82545   &0.00706104                   &2.4052   \\
     104450                &1.60210$\times 10^{-5}$     &4.25498   &0.000803958                  &3.13469  \\
     413810                &9.49748$\times 10^{-7}$     &4.07627   &9.7687$\times 10^{-5}$       &3.04088  \\
1.64731$\times 10^{6}$     &5.85542$\times 10^{-8}$     &4.0197    &1.21191$\times 10^{-5}$      &3.01089  \\
6.57346$\times 10^{6}$     &3.64710$\times 10^{-9}$     &4.00495   &1.51195$\times 10^{-6}$      &3.00280  \\  \hline
  \end{tabular}  

\end{table}
\subsection{Graded Mesh Partitions for the dG IgA Methods}
\label{LMMT_subsec:2.6:GradedMeshPartitionsforthedG IgAMethods}
We saw in the previous numerical tests that the presence of singular points reduce the convergence 
rates. In this section, we will study this subject in a more general form. 
We will focus on  solving the model problem in
domains with re-entrant corners on the boundary. 
Due to these singular corner points, the regularity of the solution (at least in a small 
vicinity) is reduced in comparison with the solutions in smooth domains  
\cite{LMMT_Grisvard:1985a, LMMT_Grisvard:2011a}. As a result, the numerical methods applied on 
quasi uniform meshes for solving these problems  do not yield  the optimal convergence rate and thus
a particular treatment must be applied. We will devise the popularly known graded mesh 
techniques which have widely been applied so far for finite element  methods
\cite{LMMT_ApelSandingWhiteman:1996, LMMT_ApelMilde1996}.

\subsubsection{Regularity properties  of the solution around the  boundary singular points}
Let us assume a domain $\Omega\subset \mathbb{R}^2$ and let $P_s\in\partial \Omega$ be a boundary point
with internal angle  $\omega \in (\pi,2\pi)$. 
  We consider the local cylindrical coordinates $(r,\theta)$ with pole $P_s$,
  and define the cone, see Fig.~\ref{LMMT_fig:1} (left), 
  \begin{align}\label{LMMT_gm0.1}
   \hspace*{-5mm}{
   \mathcal{C}=\{(x,y)\in \Omega: x=r\cos(\theta),y=r\sin(\theta), 0<r<R,0<\theta<\omega\}.
   }
  \end{align}
Then the solution in $\mathcal{C}$ can be written, \cite{LMMT_Grisvard:1985a},
\begin{align}\label{LMMT_gm0.4}
 u=u_{r}+u_{s},
\end{align}
where $u_r\in W^{l\geq 2,2}(\Omega)$ and 
 \begin{align}\label{LMMT_gm0.3}
   u_s=\xi(r)\gamma r^{\lambda}\sin(\lambda \theta),
  \end{align}
where $\gamma$ is the \textit{stress intensity factor} 
(is a real number depending only on $f$),
and  
$\lambda={\pi}/{\omega}\in (0,1)$ 
is
an exponent which determines the strength of the singularity. 
Since $\lambda <1$, by an easy computation, we can show that
the singular function $u_s$ does not belong to $W^{2,2}(\Omega)$ but 
$u\in W^{2,p}(\Omega)$
with $p={2}/{2-\lambda}$.
The 
representation 
(\ref{LMMT_gm0.3}) 
of $u_s$
helps us to reduce our study to the examination of the behavior
of $u$ in the vicinity of the singular point, since the  regularity properties of $u$ are determined
by the regularity of $u_s$. The main idea is the following: based on the a priori knowledge of the analytical 
form of $u_s$ in $\mathcal{C}$, we carefully construct a locally adapted mesh in $\mathcal{C}$ by
introducing a grading control parameter $\mu:=\mu(\lambda,k)$, such as
allows us to prove that the approximation order of the method applied on this adapted mesh for $u_s$ is similar
with the order of the method applied on the rest of the mesh (maybe quasi uniform) for $u_r$. 

\subsubsection{The graded mesh for $\mathcal{T}_H(\Omega)$ and global approximation estimates }
The area  $U_s:=\{x\in \Omega: |P_s-x|\leq R, R\geq N_Z h,{\ }N_Z\geq 2\}$ 
 is further  sub-divided  into   ring zones  $Z_{\zeta}, {\ }\zeta=0,..,\zeta_M < N_Z$,  with
 distance from $P_s$ equal to $D_{(Z_{\zeta},P_s)}:=C(n_\zeta h)^{\frac{1}{\mu}}$, where $1\leq  n_\zeta < N_Z $ 
 and $\frac{1}{2}\leq C \leq 1$. 
 The  radius of every zone is defined to be 
 $R_{Z_{\zeta}}:=D_{(Z_{\zeta+1},P_s)} - D_{(Z_{\zeta},P_s)}=C(n_{\zeta+1} h)^{\frac{1}{\mu}} -
 C(n_{\zeta} h)^{\frac{1}{\mu}}$.
 
 \begin{figure}[bth!]
\centering
\includegraphics[width  = 0.30\textwidth]{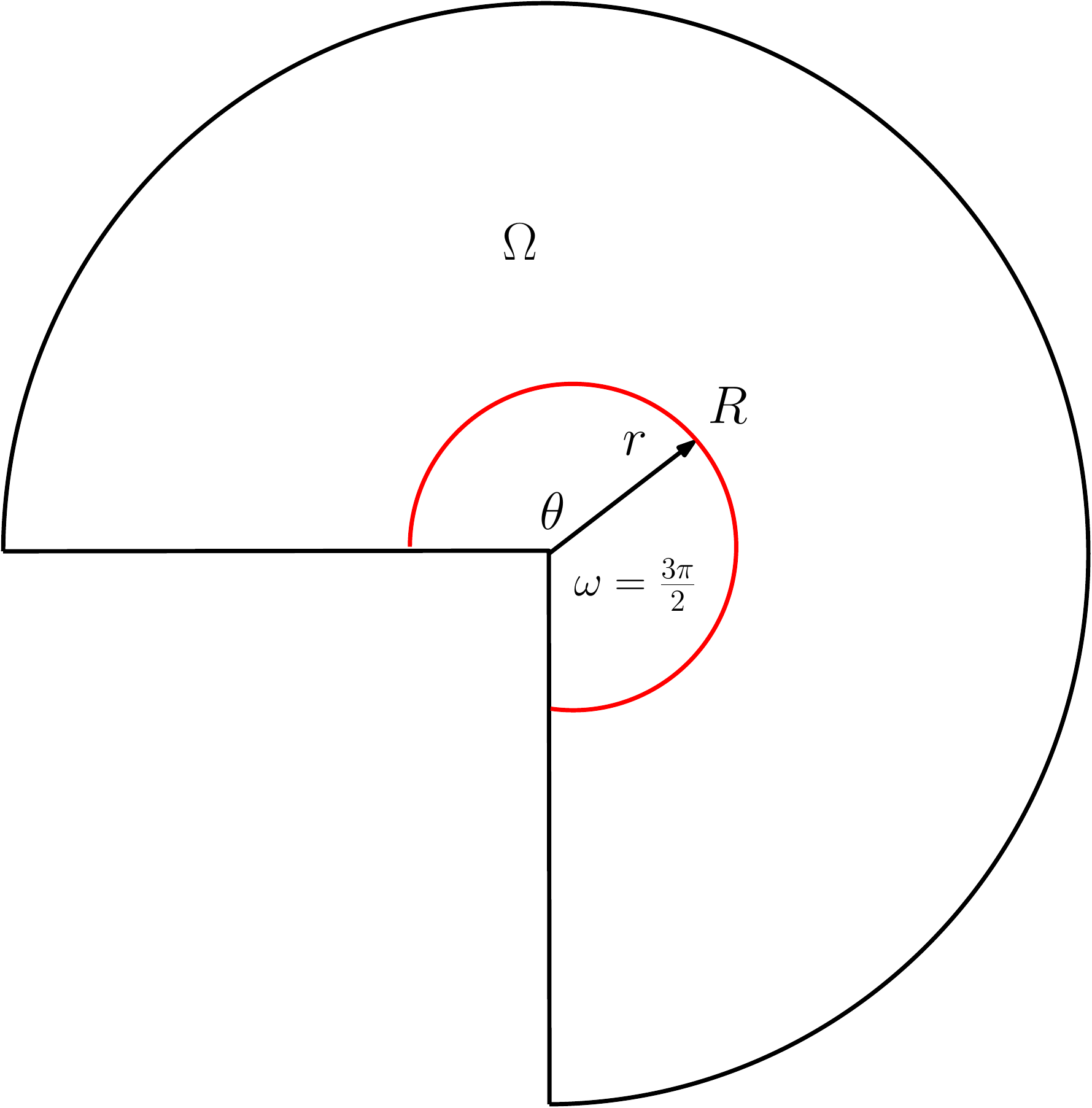} \hspace{1.5cm}
\includegraphics[width  = 0.30\textwidth]{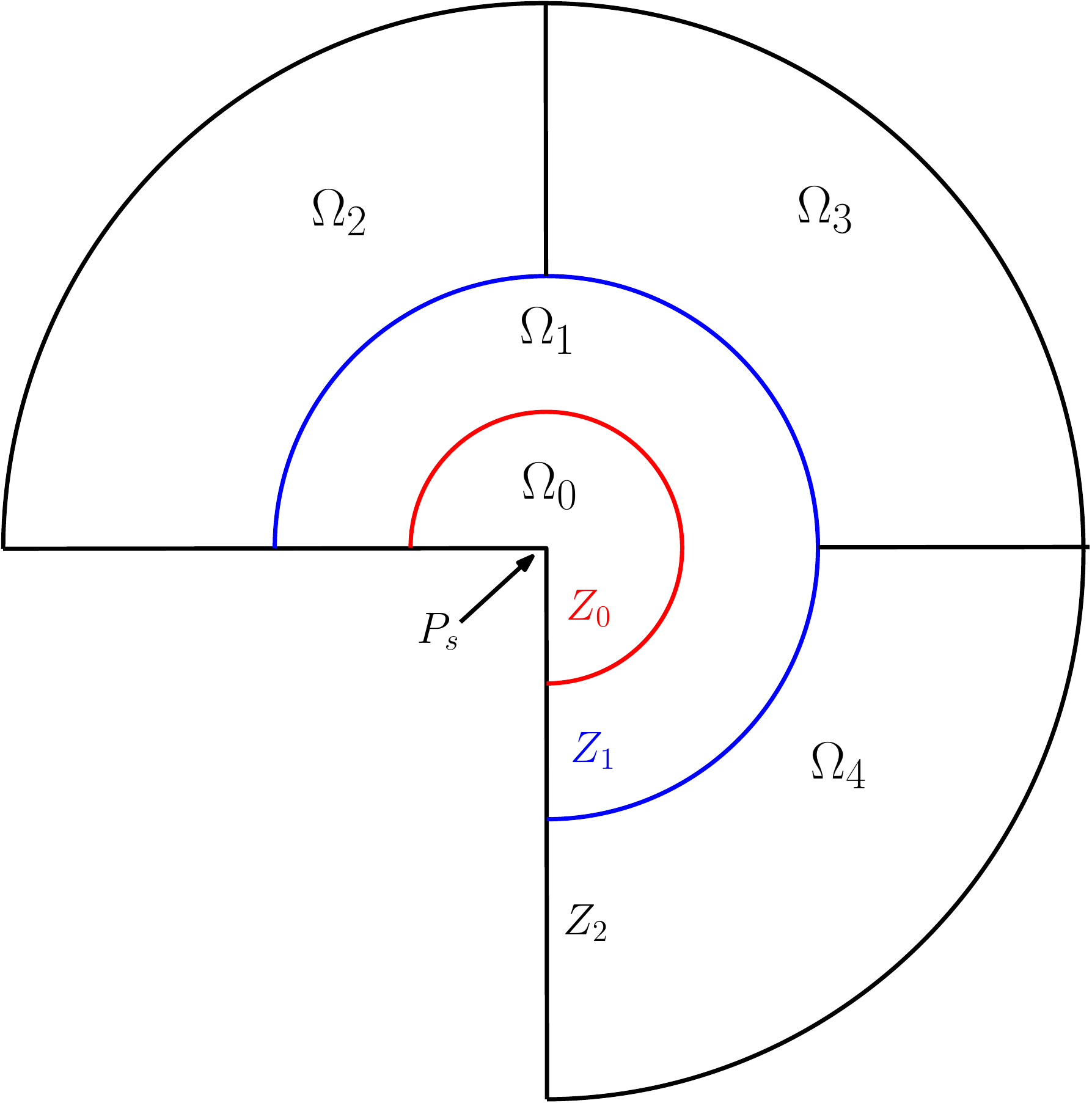}
 \caption{Left: 2d domain with corner singularity. Right: 
          Zone partition and the subdomians $\Omega_i$ of $\cal{T}_H(\Omega)$. }
\label{LMMT_fig:1}       
\end{figure}
 \par
 For convenience, we assume that the initial subdivision $\mathcal{T}_{H}$ 
 fulfill the following conditions, for an illustration see 
   Fig. \ref{LMMT_fig:1} (right) with $\zeta_M=2$:
\begin{itemize}
	\item The subdomains can be grouped into those which belong (entirely) to the area $U_s$ and those
	that belong (entirely) to $\Omega\setminus U_s$. This means that there is no $\Omega_i,{\ } i=1,...,N$
         such that $U_s\cap \Omega_i\neq \emptyset$ and $(\Omega\setminus U_s)\cap \Omega_i\neq \emptyset$.
        \item Every ring zone $Z_{\zeta}$  is partitioned into ``circular'' subdomains $\Omega_{i_{\zeta}}$ which
              have radius $\Omega_{i_{\zeta}}$ equal to the radius of the zone,
              that is $R_{\Omega_{i_{\zeta}}}= R_{Z_{\zeta}}$. For
              computational efficiency reasons, we prefer -if its possible- every 
              zone to be only represented by one subdomain. 
        \item The zone $Z_{0}$ is represented by one subdomain, say $\Omega_0$, and the mesh 
               $T^{(0)}_{h_0}(\Omega_0)$ includes all $E\in T_h(\Omega)$ such that $\partial E \cap P_s \neq \emptyset$. 
\end{itemize}
 The graded meshes  
$T^{(i_{\zeta})}_{h_{i_{\zeta}}}(\Omega_{i_{\zeta}})$  are mainly determined by 
the grading parameter $\mu(\lambda,k) \in (0,1]$ and the mesh sizes $h_{i_{\zeta}}$ are chosen to satisfy 
the following properties: 
for $\Omega_{i_{\zeta}}$  with distance $R_{\Omega_{i_{\zeta}}}$ from $P_s$,
 the mesh size  $h_{i_{\zeta}}$  
is defined to be $h_{i_{\zeta}}=\cal{O}(h R_{\Omega_{i_{\zeta}}}^{1-\mu})$ and 
for  $T^{(i_0)}_{h_0}(\Omega_0)$ the mesh size is  
of order $h_{i_{0}}=\cal{O}(h^{\frac{1}{\mu}})$, 
more details are given in \cite{LMMT_LangerMantzaflarisMooreToulopoulos:2014}.
Based on previous properties of the $T^{(i_{\zeta})}_{h_{i_{\zeta}}}(\Omega_{i_{\zeta}})$ meshes,
we can conclude the relations
\begin{align}\label{LMMT_3.2a}
 C_m h^{\frac{1}{\mu}} &\leq h_{{i_{\zeta}}}   \leq C_M h^{\frac{1}{\mu}}, &\text{if}{\ }  \overline{\Omega}_{i_{\zeta}}\cap P_s\neq \emptyset, \\
\label{LMMT_3.2b}
 C_m hR_{\Omega_{i_{\zeta}}}^{1-\mu} &        \leq h_{i_\zeta}   \leq C_M h D_{(Z_{\zeta},P_s)}^{1-\mu}, &\text{if}{\ }  \overline{\Omega}_{i_\zeta}\cap P_s= \emptyset. 
\end{align}

   Using the local interpolation estimate of  Lemma \ref{LMMT_lemma5.3} in every $\Omega_{i_{\zeta} } \subset U_s$
   and the characteristics of the meshes  $T^{(i_{\zeta})}_{h_{i_{\zeta}}}(\Omega_{i_{\zeta}})$,
   we can easily obtain  the estimate
     \begin{align}\label{LMMT_gm3.7}
 	  \|u_s-\Pi_h u_s\|_{dG(U_s)} \leq \sum_{i_{\zeta}} C_{i_{\zeta}} h_{i_{\zeta}}^{\lambda} ,
     \end{align}
         since $u_s\in W^{l=1+\lambda,2}(\Omega)$ (and  also $u\in W^{2,p=\frac{2}{2-\lambda}}(\Omega)$) and 
         we do not consider nonä-matching grid interfaces. 
         Using the mesh properties (\ref{LMMT_3.2a}) and (\ref{LMMT_3.2b}), 
          estimate   (\ref{LMMT_gm3.7}) and Lemma \ref{LMMT_lemma5.3}, we can prove 
the following global error estimate 
         of the proposed dG IgA method applied to problems with boundary singularities,
	  see \cite{LMMT_LangerMantzaflarisMooreToulopoulos:2014}.
\begin{theorem}\label{LMMT_Thrm1}
Let   $Z_{\zeta}$ be a partition of $\Omega\subset \mathbb{R}^2$ into   ring zones and 
let $\mathcal{T}_{H}$ to be a sub-division to $\Omega$ with the properties as listed in the previous paragraphs.
Let $T_{h_i}^{(i)}(\Omega_i)$ be the meshes of subdomains  as described above.   
Then for the solution $u$ of (\ref{LMMT_sec:2:eqn:Model1-VF}) and the dG IgA solution
$u_h$, we have the approximation result
\begin{align}\label{LMMT_gm3.8}
	\|u- u_h\|_{dG} \leq C h^{r}, {\ } \text{with}{\ }r=\min(k,{\lambda/ \mu}),
\end{align}
where the constant $C>0$ depends on the characteristics of the mesh and on
the mappings $\Phi_i$ (see (\ref{LMMT_2_0})) but not on $h_i$. 
\end{theorem}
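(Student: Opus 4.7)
The plan is to mirror the abstract framework of Theorem \ref{LMMT_:sec:2:th1:dGnormErrorEstimate}, but now exploiting the explicit form of the singular component of $u$ together with the local mesh-size law (\ref{LMMT_3.2a})-(\ref{LMMT_3.2b}) to recover a better rate than the one $h^\lambda$ produced by a quasi-uniform mesh.

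First I would apply the standard dG error machinery. Coercivity (Lemma \ref{LMMT_lemma5}), consistency of the dG IgA scheme (which only requires $u\in W^{1,2}(\Omega)$ and is therefore applicable here), and the boundedness of $a_h(\cdot,\cdot)$ (Lemma \ref{LMMT_lemma5.2}) yield, via a Cea-type argument in the dG framework, a quasi-best approximation bound of the form
\begin{equation*}
\|u - u_h\|_{dG} \leq C \, \|u - \Pi_h u\|_{dG}.
\end{equation*}
Since this subsection restricts to matching interfaces, the $\alpha^{(j)} h_i / h_j$ cross-terms visible in (\ref{LMMT_5.24}) collapse. I would then insert the decomposition $u = u_r + u_s$ from (\ref{LMMT_gm0.4}). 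The regular part $u_r \in W^{l,2}(\Omega)$ with $l \geq k+1$ is handled in one shot by Lemma \ref{LMMT_lemma5.3} applied on the full partition, yielding $\|u_r - \Pi_h u_r\|_{dG} \leq C h^k$. The remaining task is to control $\|u_s - \Pi_h u_s\|_{dG}$.

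For $u_s$ I would split $\Omega = (\Omega \setminus U_s) \cup U_s$ and treat each part separately. Away from $P_s$, $u_s$ is smooth and the standard interpolation estimate on the essentially quasi-uniform subdomains produces again $O(h^k)$. Inside $U_s$ I would invoke (\ref{LMMT_gm3.7}) together with the explicit bound $|D^m u_s| \lesssim r^{\lambda - m}$ obtained by differentiating (\ref{LMMT_gm0.3}): on a ring subdomain $\Omega_{i_\zeta}$ at distance $R = R_{\Omega_{i_\zeta}}$ from $P_s$, (\ref{LMMT_3.2b}) gives $h_{i_\zeta} \sim h R^{1-\mu}$, and direct integration in polar coordinates produces
\begin{equation*}
\|u_s\|_{W^{l,2}(\Omega_{i_\zeta})}^2 \;\lesssim\; \int_{R}^{R + hR^{1-\mu}} r^{2(\lambda - l) + 1}\,dr \;\sim\; h\, R^{2\lambda - 2l + 2 - \mu}.
\end{equation*}
Feeding this into Lemma \ref{LMMT_lemma5.3} (with $\delta(l,2,2)=l-1$) gives a local contribution of order $h^{2l-1} R^{2\lambda - (2l-1)\mu}$ to $\|u_s - \Pi_h u_s\|_{dG(\Omega_{i_\zeta})}^2$. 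For the innermost zone $\Omega_0$, (\ref{LMMT_3.2a}) forces $h_{i_0} \sim h^{1/\mu}$, and the fractional regularity $u_s \in W^{1+\lambda,2}$ produces the contribution $h_{i_0}^{\lambda} \sim h^{\lambda/\mu}$. Summing the geometric progression in $R$ over all outer zones then collapses the total to $C h^{2\lambda/\mu}$. Taking square roots and combining with the $h^k$ contributions from $u_r$ and from the smooth-region part of $u_s$ produces $\|u - u_h\|_{dG} \leq C h^{\min(k,\lambda/\mu)}$.

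The main obstacle will be the bookkeeping in the zone-by-zone step. The exponent $2\lambda - (2l-1)\mu$ of $R$ in the outer-zone contributions changes sign depending on the choice of $l$ and $\mu$, so one must choose $l$ large enough (legitimate, since $u_s$ is smooth away from $P_s$) that the geometric sum in $R$ converges, and then verify that its dominant term matches up to constants with the innermost contribution $h^{\lambda/\mu}$, so that no spurious logarithmic factor appears. A secondary nuisance is that the innermost zone supports only the fractional regularity $W^{1+\lambda,2}$ of $u_s$, so Lemma \ref{LMMT_lemma5.3} must be applied there with the endpoint pair $l=2,\; p = 2/(2-\lambda)$ rather than with $l=k+1,\;p=2$; this is already covered by the hypotheses of that lemma and yields precisely the $h_{i_0}^{\lambda}$ factor needed.
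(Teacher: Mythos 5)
Your proposal is correct and follows essentially the same route as the paper: a Cea-type bound combined with the splitting $u=u_r+u_s$ from (\ref{LMMT_gm0.4}), the quasi-interpolation estimates of Lemma~\ref{LMMT_lemma5.3} applied zone by zone, and the graded mesh-size laws (\ref{LMMT_3.2a})--(\ref{LMMT_3.2b}) to sum the local contributions into (\ref{LMMT_gm3.7}) and hence $h^{\min(k,\lambda/\mu)}$. The zone-summation bookkeeping you flag as the main obstacle (choosing $l$ on the outer rings so the geometric sum converges without a logarithmic loss, and using the fractional-regularity endpoint only on the innermost patch) is precisely the technical content the paper defers to the reference.
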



\subsubsection{Numerical examples}
\label{LMMT_subsec:2.7:NumericalExamples}
In this section, we present a series of numerical examples in order to validate  the theoretical
analysis on the graded mesh in 
Section~\ref{LMMT_subsec:2.6:GradedMeshPartitionsforthedG IgAMethods}. 
The first example concerns  a two-dimensional problem with a boundary point singularity 
(L-shape domain).   The second example is the interior point singularity problem of the 
Section~\ref{LMMT_subsec:2.4:NumericalExamples}. 

\begin{enumerate}
 \item \textbf{Boundary Singular Point}\\
One of the classical test cases is the singularity due to a re-entrant corner. 
The L-shape domain given by $(-1, 1)^{2} \setminus (-1, 0)^{2}$. In  Fig.~\ref{LMMT_fig:2} (left), 
the subdivision of $\Omega$ into two subdomains is presented. The exact solution is
$u = r^\frac{\pi}{\omega}\sin(\theta \pi/\omega),$ where $\omega= 3\pi/2$. 
We set $\Gamma_D=\partial \Omega$ and the data ${f},{u}_D$ of \eqref{LMMT_sec:2:eqn:Model1-VF} 
are specified by the  given exact solution. 
The problem has been  solved  using $B$-splines of degree $k=1$ and $k=2$ and
the grading parameter is  $\mu=0.6$ and $\mu=0.3$ respectively. In  Fig. \ref{LMMT_fig:2} (middle), 
the graded mesh for $\mu=0.6$ is presented and in  Fig. \ref{LMMT_fig:2} (right) the contours of 
the numerical solution are plotted.  
In Table \ref{LMMT_ConvRate_Lshape}, we present the  convergence rates of the method without grading
(left columns). As we can see, the convergence rates  are determined by 
the regularity of the solution around the singular boundary point. In the right columns, 
we present the convergence rates corresponding to the graded meshes. 
We can see that the rates 
tend to be optimal with respect the $B$-spline degree. 

\begin{figure}[bth!]
\centering
\includegraphics[width=38mm]{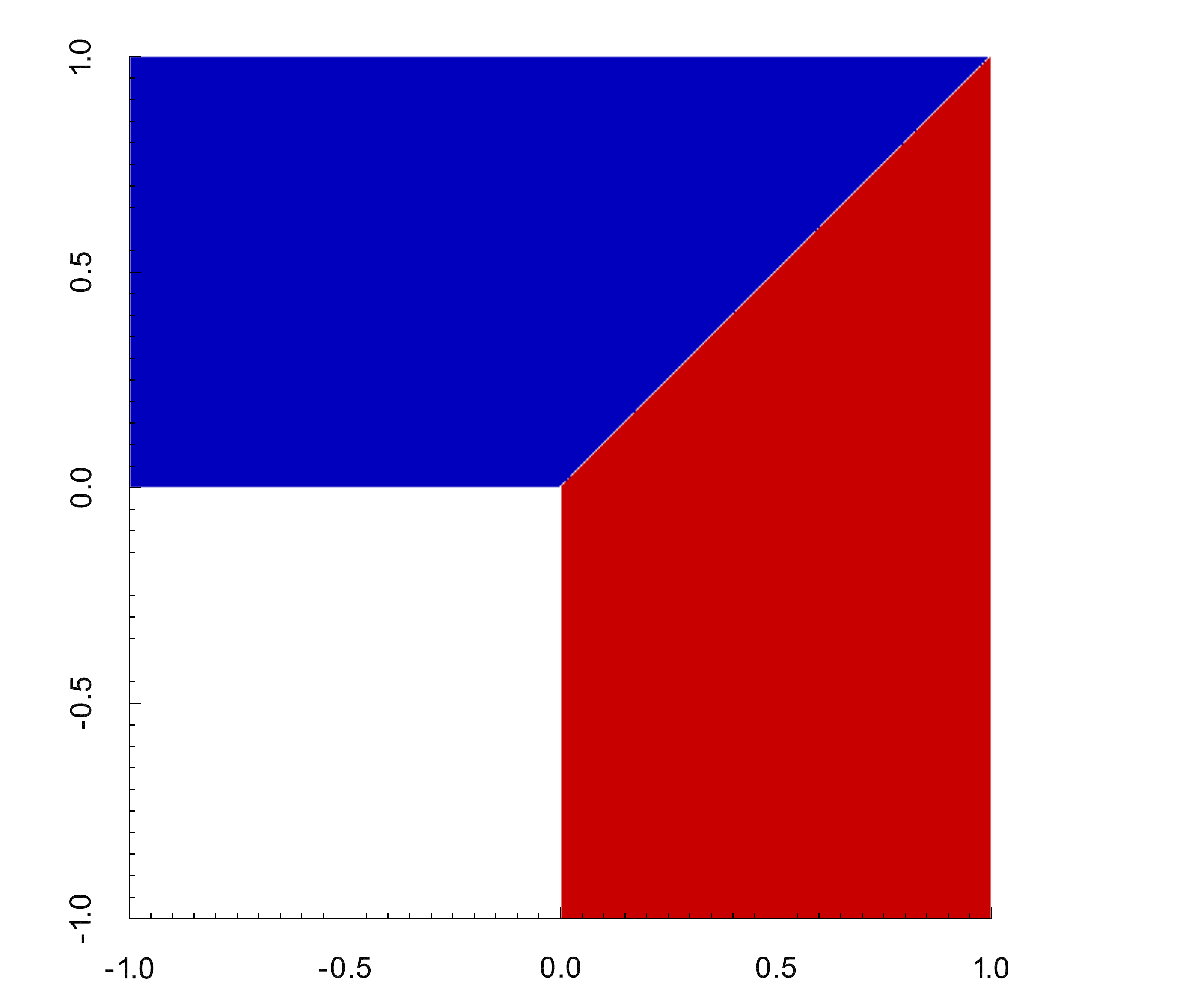}
\includegraphics[width=38mm]{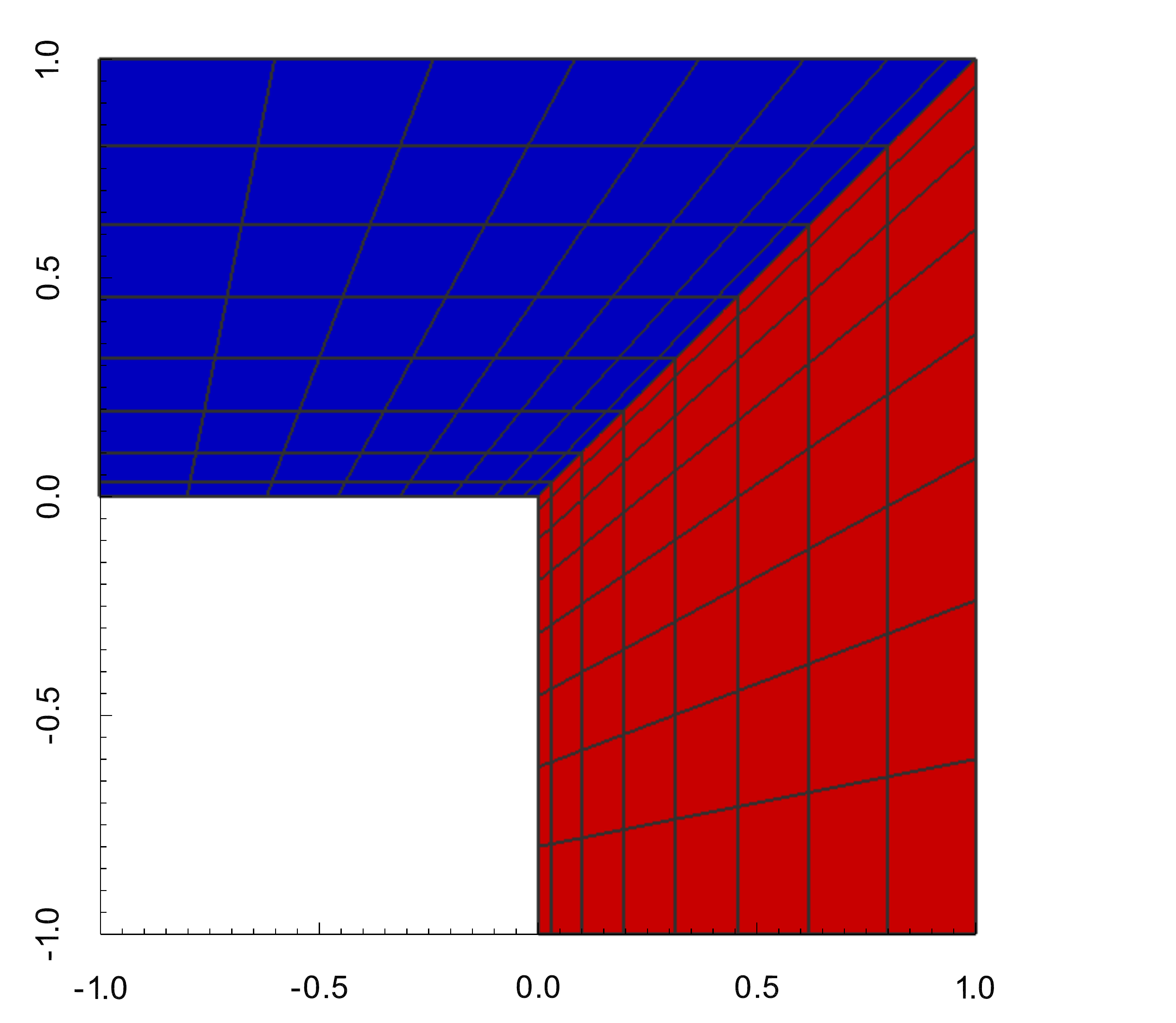}
\includegraphics[width=38mm]{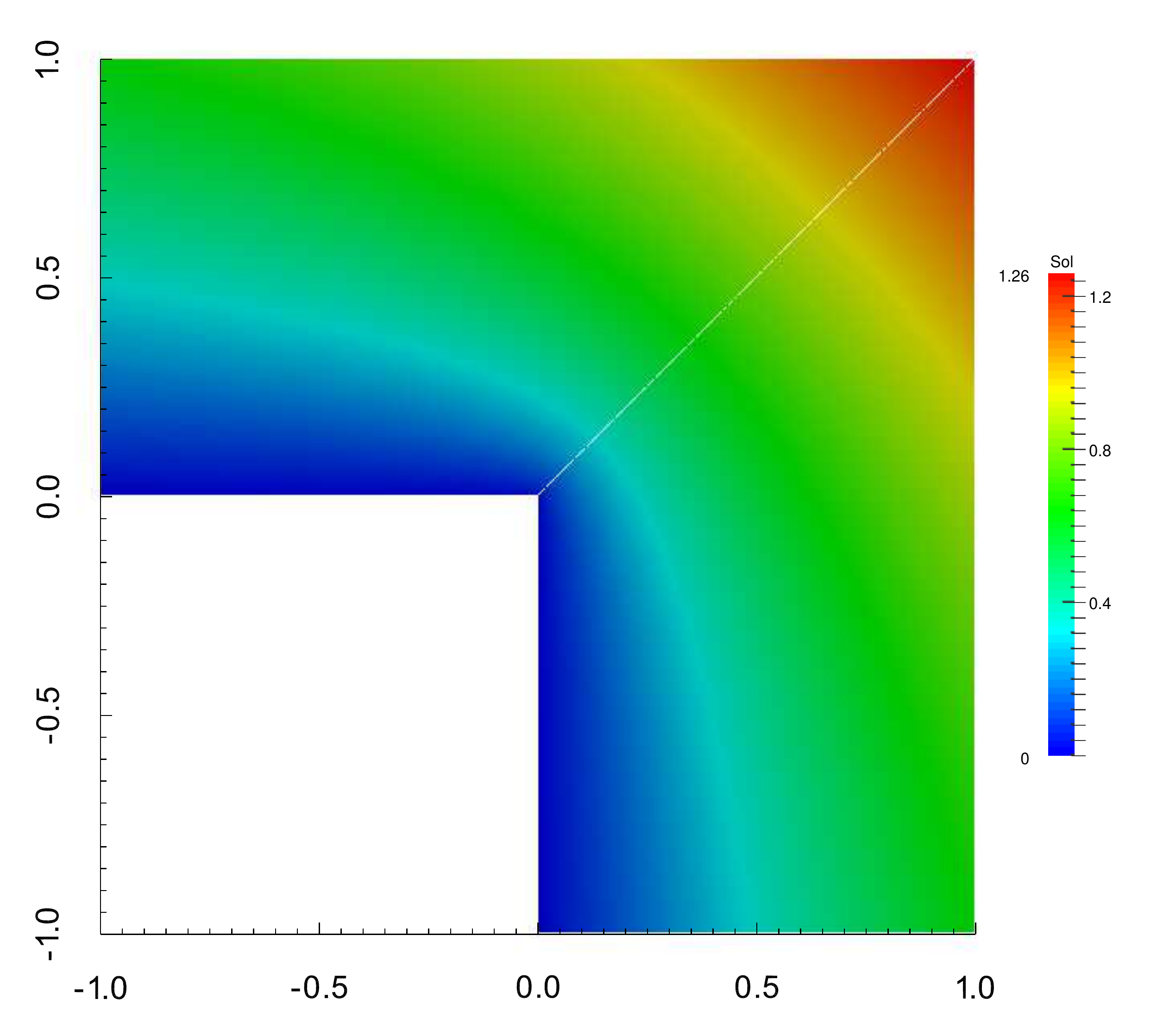}
\caption{L-shape test. Left: subdomains, middle: graded mesh with $\mu=0.6$, right: contours of $u_h$.}
\label{LMMT_fig:2}
\end{figure}

\item \textbf{Interior Point Singularity}\\
The domain is $\Omega=(-1,1)^{2}$.
We consider a solution ${u}$ of (\ref{LMMT_sec:1:eqn:Model1}) with a point singularity at the origin  given by
$u(x)=|x|^{\lambda}$. We set $\lambda=0.6$ and is easy to show that  $u\in W^{l=1.6,2}(\Omega)$.
We set $\alpha=1$ in $\Omega$.
In the left columns of  Table \ref{LMMT_table_GM_PntSingl}, we display the convergence rates for degrees 
$k=1$ and $k=2$ without mesh grading. 
The convergence rates are suboptimal and follow the approximation estimate (\ref{LMMT_5.24}). 
The problem has been  solved again on  graded  
meshes with $\mu=0.6$ for $k=1$ and $\mu=0.3$ for $k=2$, see Fig. \ref{LMMT_fig3}. 
We display the convergence rates in the right columns of Table \ref{LMMT_table_GM_PntSingl}. The rates tend to be optimal as it was expected. 
\end{enumerate}

\begin{figure}
  \centering
  \includegraphics[width=0.32\textwidth]{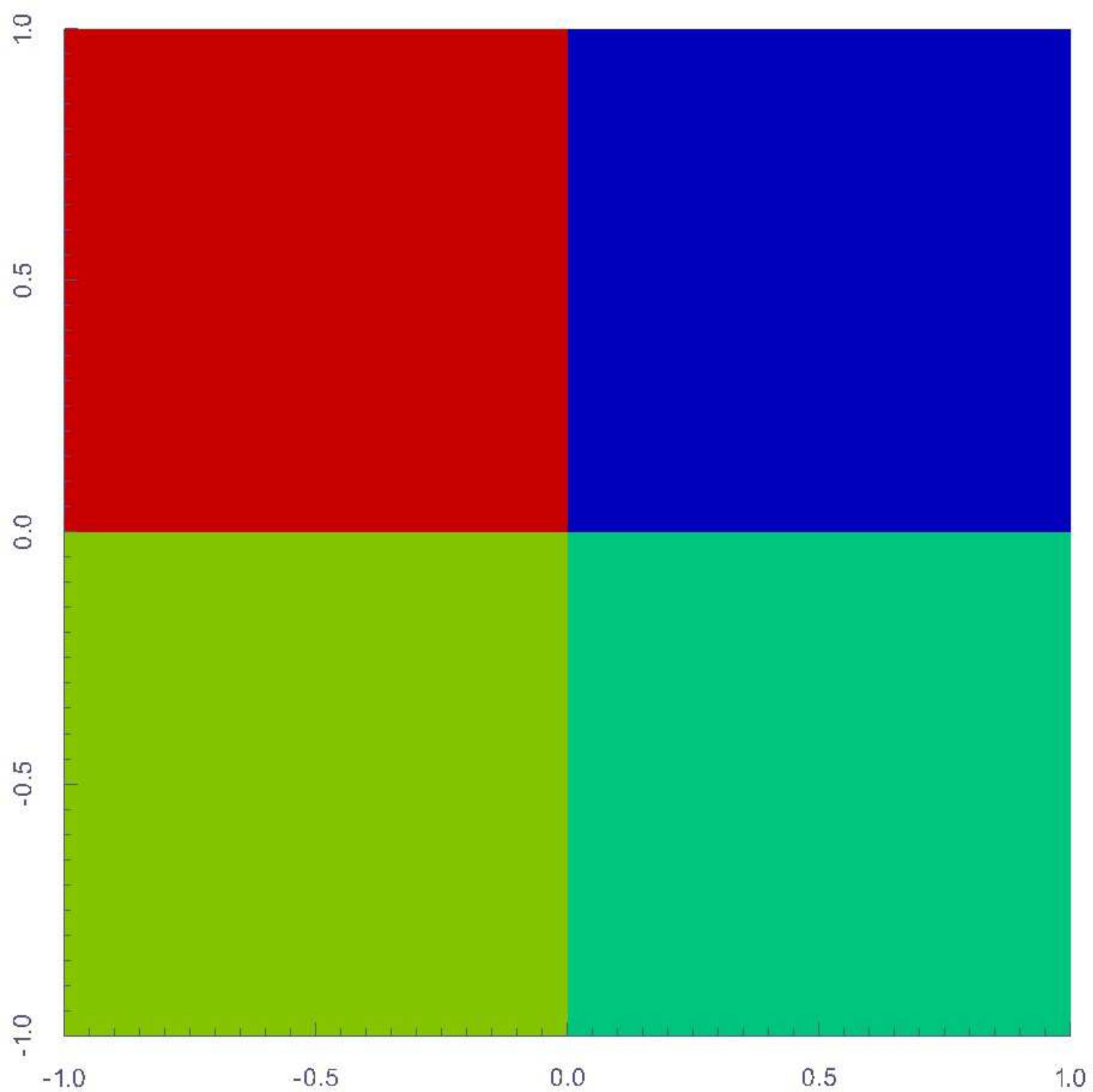}
  \includegraphics[width=0.32\textwidth]{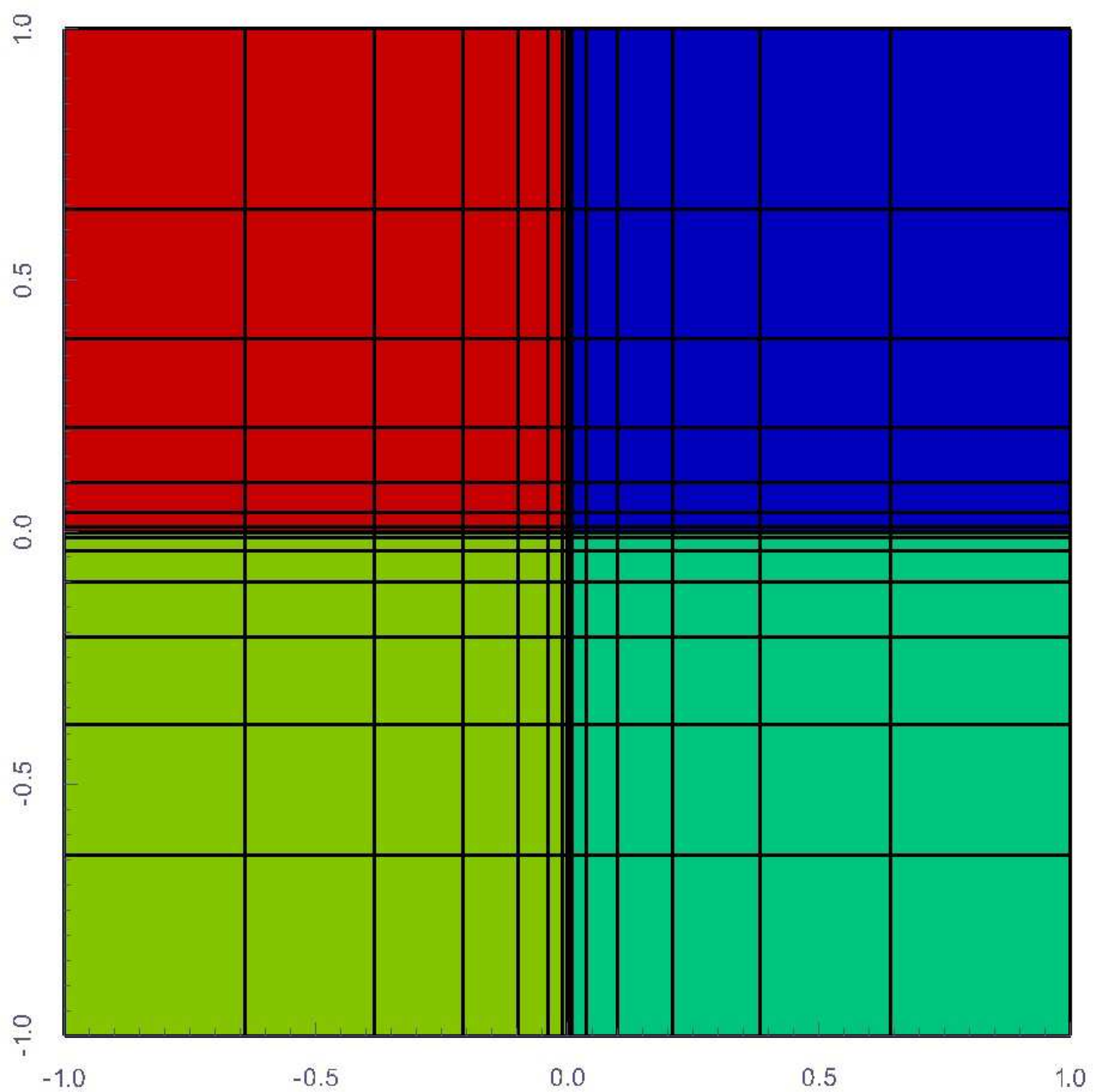}
  \includegraphics[width=0.34\textwidth, height=.196\textheight]{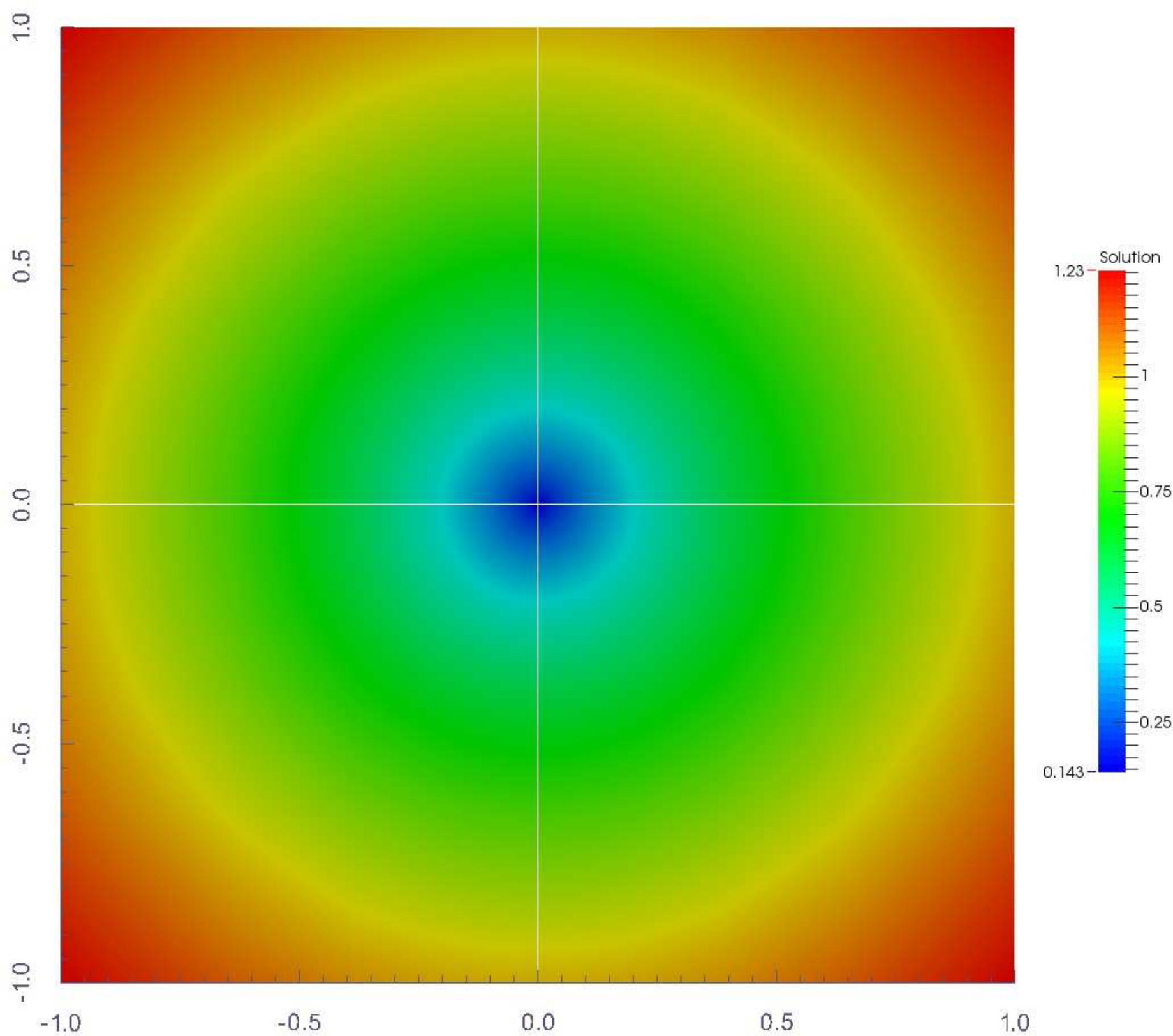}
  \caption{Interior singularity test. 
   Left: the subdomains $\Omega_i,{\ }i=1,...,4$, 
   middle: the graded mesh with $\mu=0.3$, right: the contours of $u_h$.}
    \label{LMMT_fig3}
\end{figure}

\begin{table}[th]
\centering %
\caption{The convergence rates for the  L-shape test (left) and for the interior singularity test (right).}
\label{LMMT_table_GM_PntSingl}
\label{LMMT_ConvRate_Lshape}
\begin{minipage}{5cm}
\begin{tabular}{|c|l|l|l|l|} 
\hline 
          &\multicolumn{2}{|l|}{no grading }
          &\multicolumn{2}{|l|}{with grading } \\ [0.5ex] \hline 
$\frac{h}{2^s}$    &$k=1$& $k=2$& $\begin{matrix}
                                    k=1,\\ 
                                   \mu=0.6
                                    \end{matrix}$  & $\begin{matrix}
                                                        k=2,{\ }\\
                                                        \mu=0.3
                                                     \end{matrix}{\ }$\quad
                                                     \\ \hline 
\multicolumn{5}{|c|} {Convergence rates } \\ [0.5ex] \hline
 $s=0$  &-     & -      & -      &-               \\ 
 $s=1$  &0.636 & 0.650  &0.915   &1.672    {\ }   \\ 
 $s=2$  &0.641 & 0.657  &0.933   &1.919    {\ }   \\ 
 $s=3$  &0.647 & 0.661  &0.946   &1.987    {\ }   \\ 
 $s=4$  &0.652 & 0.666  &0.957   &2.071   {\ }   \\ \hline
\end{tabular}
\end{minipage}
\begin{minipage}{5cm}
\begin{tabular}{|c|l|l|l|l|} 
\hline 
          &\multicolumn{2}{|l|}{no grading }
          &\multicolumn{2}{|l|}{with grading } \\ [0.5ex] \hline 
$\frac{h}{2^s}$    &$k=1$& $k=2$& $\begin{matrix}
                                    k=1,\\ \mu=0.6
                                    \end{matrix}$  
                                & $\begin{matrix}
                                  k=2,{\ } \\ \mu=0.3
                                  \end{matrix}{\ }$\quad \\ \hline 
\multicolumn{5}{|c|} {Convergence rates }        \\ [0.5ex] \hline
 $s=0$  &-     & -      & -      &-              \\ 
 $s=1$  &0.508 &0.573   & 0.855  & 1.661  {\ }   \\ 
 $s=2$  &0.547 &0.580   & 0.930  & 1.828  {\ }   \\ 
 $s=3$  &0.570 &0.586   & 0.969  & 1.928  {\ }   \\ 
 $s=4$  &0.583 &0.591   & 0.990  & 1.951  {\ }   \\ \hline
\end{tabular}
\end{minipage}
\end{table}



\section{Multipatch dG IgA for PDEs on Surfaces}
\label{LMMT_sec:3:Surfaces}

\subsection{Diffusion Problems on Open and Closed Surfaces}
\label{LMMT_subsec:3.1:DiffusionProblemsonOpenandClosedSurfaces}

Let us now consider a  diffusion problem of the form (\ref{LMMT_sec:1:eqn:Model1})
on a sufficiently smooth, open surface $\Omega$, the weak formulation of which 
can formally be written in the same form as (\ref{LMMT_sec:2:eqn:Model1-VF}) 
in Section~\ref{LMMT_sec:2:Volumetric}:
find  a function $u\in W^{1,2}(\Omega)$ such that $u=u_D$ 
on the boundary $\partial \Omega$ of the surface $\Omega$ 
and  satisfies the variational formulation 
%
%
\begin{equation}
\label{LMMT_sec:3:eqn:SurfaceDiffusion-VF}
 a(u,v)=l(v), \; \forall  v \in W^{1,2}_{0}(\Omega),
\end{equation}
with the bilinear and linear forms 
$a(\cdot,\cdot)$ and  $l(\cdot)$, 
but now defined by the relations
\begin{equation*}
	a(u,v) = \int_\Omega \alpha \, \nabla_\Omega u \cdot \nabla_\Omega v \, d\Omega
	 \quad \mbox{and} \quad
	l(v)  = \int_\Omega f v \, d\Omega,
\end{equation*}
respectively, where $\nabla_\Omega$ denotes the 
surface
gradient, see, e.g., Definition~2.3 in \cite{LMMT_DziukElliott:2013a}
for its precise description. 
For simplicity of the presentation,
we here assume Dirichlet boundary condition.
It is clear that other boundary conditions can be treated 
in the same framwork as it was done in \cite{LMMT_LangerMoore:2014a} 
for mixed boundary conditions. In the case of open surefaces with pure Neumann boundary condition 
and closed surfaces, we look for a solution $u\in W^{1,2}(\Omega)$ satifying  
the uniqueness condition $\int_\Omega u(x) dx = 0$ and the 
variational equation (\ref{LMMT_sec:2:eqn:Model1-VF}) 
under the solvability condition $l(1) = 0$. 
In  Subsection~\ref{LMMT_subsec:3.4:NumericalExamples}, 
we present and discuss the numerical results obtained 
for different diffusion problems on an open (Car) and on two 
closed  (Sphere, Torus) surfaces.

\subsection{Multipatch dG IgA Discretization}
\label{LMMT_subsec:3.2:MultipatchdGIgADiscretization}

Let $\mathcal{T}_{H}(\Omega) = \{\Omega_{i}\}_{i=1}^{N}$ be again a partition of 
our physical computational domain $\Omega$, that is now a surface, 
into non-overlapping patches (sub-domains) $\Omega_{i}$ such that 
(\ref{LMMT_1}) holds,
and let each patch $\Omega_{i}$ be the image of the 
parameter domain  $\widehat{\Omega} = (0,1)^2 \subset \mathbb{R}^{2}$ 
by some NURBS mapping 
$\Phi_i: \widehat{\Omega} \rightarrow \Omega_{i} \subset \mathbb{R}^{3},\; 
\hat{x} = (\hat{x}_1,\hat{x}_2) 
\mapsto 
{x} = ({x}_1,{x}_2,{x}_3)=\Phi_i(\hat{x})$, 
which can be represented in the form
\begin{equation}
\label{LMMT_GeometricalMappingRepresentation}
  \Phi_i(\hat{x}_{1},\hat{x}_{2}) = \sum_{k_{1}=1}^{n_{1}} \sum_{k_{2}=1}^{n_{2}}
 {C}^{(i)}_{(k_{1},k_{2})} \widehat{B}^{(i)}_{(k_{1},k_{2})}(\hat{x}_{1},\hat{x}_{2})
\end{equation}
where $\{ \hat{B}^{(i)}_{(k_{1},k_{2})} \}$ are the bivariate NURBS basis functions, 
and $\{{C}^{(i)}_{(k_{1},k_{2})} \}$ are the control points, 
see \cite{LMMT_CottrellHughesBazilevs:2009a} for a detailed 
description. We always assume that the mapping $\Phi_i$ is regular.
Therefore, the inverse mapping $\hat{x} = {\Psi}_i(x):=\Phi^{-1}_i(x)$
is well defined for all patches $\Omega_{i}$, $i=1,\ldots,N$.

Now the dG IgA scheme for solving our surface diffusion problem 
(\ref{LMMT_sec:3:eqn:SurfaceDiffusion-VF}) can formally be written 
in the form (\ref{LMMT_8a}) as in Section~\ref{LMMT_sec:2:Volumetric}:
find $u_h\in \mathbb{B}_h(\mathcal{T}_H(\Omega))$ such that
\begin{equation}\label{LMMT_:sec:3:eqn:dgIgAScheme-SurfaceDiffusion}
 a_h(u_h,v_h)= l(v_h) + p_D(u_D,v_h), \; \forall v_h \in \mathbb{B}_h(\mathcal{T}_H(\Omega)),
\end{equation}
where $a_h(\cdot,\cdot)$ and $p_D(\cdot,\cdot)$ are defined by (\ref{LMMT_8b})
provided that we replace the gradient $\nabla$ by the surface gradient $\nabla_\Omega$.
Since the dG bilinear form $a_h(\cdot,\cdot)$ is positive on 
$\mathbb{B}_h(\mathcal{T}_H(\Omega)) \setminus \{0\}$ for sufficiently 
large $\mu$, cf. Lemma~\ref{LMMT_lemma5}, there exist a unique dG solution $u_h\in \mathbb{B}_h(\mathcal{T}_H(\Omega))$. 
The dG IgA scheme (\ref{LMMT_:sec:3:eqn:dgIgAScheme-SurfaceDiffusion}) is equivalent 
to a system of algebraic equations of the form 
\begin{equation}\label{LMMT_:sec:3:eqn:dgIgA-System}
 K_h \underline{u}_h = \underline{f}_h,
\end{equation}
the solution $\underline{u}_h$ of which gives us the coefficients (control points) 
of $u_h$. In order to generate the entries of the system matrix $K_h$ and
the right-hand side $\underline{f}_h$, we map the patches $\Omega_i$ composing the 
physical domain, i.e., our surface $\Omega$, into the parameter domain $\widehat{\Omega} = (0,1)^2$.
For instance, for the broken part $a_i(u_h,v_h)$ of the bilinear form $a_h(u_h,v_h)$, 
we obtain 
\begin{align*}
 a_i(u_h,v_h) &= \int_{\Omega_i} \alpha^{(i)}\, \nabla_\Omega u_h(x) \cdot \nabla_\Omega u_h(x)\, d\Omega \\
	      &= \int_{\widehat{\Omega}_i} \alpha^{(i)} \, 
		  [J_i(\hat{x}) F^{-1}_i(\hat{x})  \hat{\nabla} \hat{u}_i(\hat{x}) ]^{\top} 
		  [J_i(\hat{x}) F^{-1}_i(\hat{x})  \hat{\nabla} \hat{v}_i(\hat{x}) ]  g_i(\hat{x})\, d\hat{x}\\
	      &= \int_{\widehat{\Omega}_i} \alpha^{(i)} \, (\hat{\nabla}\hat{u}_i(\hat{x})) ^{\top}
		  \, F^{-1}_i(\hat{x}) \, 
		  \hat{\nabla} \hat{v}_i(\hat{x})\, g_i(\hat{x}) d\hat{x} \, ,
\end{align*}
where $J_i(\hat{x}) = \partial \Phi_i(\hat{x}) / \partial \hat{x}$,
      $F_i(\hat{x})= (J_i(\hat{x}))^\top (J_i(\hat{x}))$ and
      $ g_i(\hat{x}) = (\det F_i(\hat{x}))^{1/2}$
denote the Jacobian, the first fundamental form and the square root
of the associated determinant, respectively.
These terms, coming from the parameterization of the domain, can be
exploited for deriving efficient matrix assembly methods,
cf. \cite{LMMT_iil2014, LMMT_iil2014mc}.
Furthermore, we use the notations $\hat{u}_i (\hat{x})= u_h(\Phi_i(\hat{x}))$
and $\hat{\nabla} = (\frac{\partial}{\partial \hat{x}_1},\frac{\partial}{\partial \hat{x}_2})^{\top} $.

\subsection{Discretization Error Estimates}
\label{LMMT_subsec:3.3:DiscretizationErrorEstimates}

In \cite{LMMT_LangerMoore:2014a}, we derived discretization error estimates 
of the form 
\begin{equation}
\label{LMMT_:sec:3:eqn:DGnormErrorEstimate1}
\|u-u_{h} \|_{dG}^2 \leq C 
\sum_{i=1}^{N} \alpha^{(i)} h_{i}^{2t}\|u\|^{2}_{H^{1+t}(\Omega_{i})},
\end{equation}
with $t:= \min\{s,k\} $, provided that the solution $u$ of our surface diffusion 
problem (\ref{LMMT_sec:3:eqn:SurfaceDiffusion-VF}) 
belongs to $H^{1+s}(\mathcal{T}_{H}(\Omega)) = W^{1+s,2}(\mathcal{T}_{H}(\Omega))$ with some $s > 1/2$. 
In the case $t=k$, estimate (\ref{LMMT_:sec:3:eqn:DGnormErrorEstimate1}) 
yields the convergence rate $\mathcal{O}(h^k)$ with respect to  the dG norm,
whereas the Aubin-Nitsche trick provides the faster rate $\mathcal{O}(h^{k+1})$ 
in the $L_2$ norm. Here $k$ always denotes the underlying polynomial degree of the NURBS.
This convergence behavior is nicely confirmed by our numerical experiments 
presented in \cite{LMMT_LangerMoore:2014a} and in the next subsection.

In \cite{LMMT_LangerMoore:2014a}, we assumed matching meshes and 
some regularity of the solution of (\ref{LMMT_sec:3:eqn:SurfaceDiffusion-VF}), 
namely $u \in H^{1+s}(\mathcal{T}_{H}(\Omega))$.
It is clear that the results of Theorem~\ref{LMMT_:sec:2:th1:dGnormErrorEstimate},
which includes  no-matching meshes and low-regularity solutions,
can easily be carried over to diffusion problems on open and closed surfaces.
The same is true for mesh grading techniques presented in 
Subsection~\ref{LMMT_subsec:2.6:GradedMeshPartitionsforthedG IgAMethods}.

\subsection{Numerical Examples}
\label{LMMT_subsec:3.4:NumericalExamples}


\subsubsection{Sphere}
\label{LMMT_subsubsec:3.4.1:Sphere}


Let us start with a diffusion problem on a closed surface $\Omega$
that is given by the sphere 
$\Omega = \{(x,y,z) \in (-1,1)^3:\;  x^2+y^2+z^2 = 1\}$
with 
unitary radius.
The computational domain $\Omega$ is decomposed into 6 patches, 
see left-hand side of Fig.~\ref{LMMT_sec:3:fig:sphere}.
The knot vectors representing the geometry of each patch are given as 
$\Xi_{1,2}= (0, 0, 0, 0, 0, 1, 1, 1, 1, 1)$
in both directions.  
Since the surface is closed, 
we impose the uniqueness constraint $\int_{\Omega} u\, d\Omega = 0$ on the solution. 
The right-hand side 
$f(r,\phi, \theta) = 12u(r,\phi, \theta)$, 
where 
the solution
$u(r,\phi, \theta) = 12\sin(3\phi)\sin^3(\theta)$ is an eigenfunction of the Laplace-Beltrami
operator $(-\Delta_{\Omega})$ satisfying the compatibility 
condition $\int_{\Omega} f\, d\Omega = 0.$ 
The example can also be found in \cite{LMMT_GrossReusken:2011a}.
 \begin{figure}[th!]
  \begin{center}
      \includegraphics[width=0.51\textwidth]{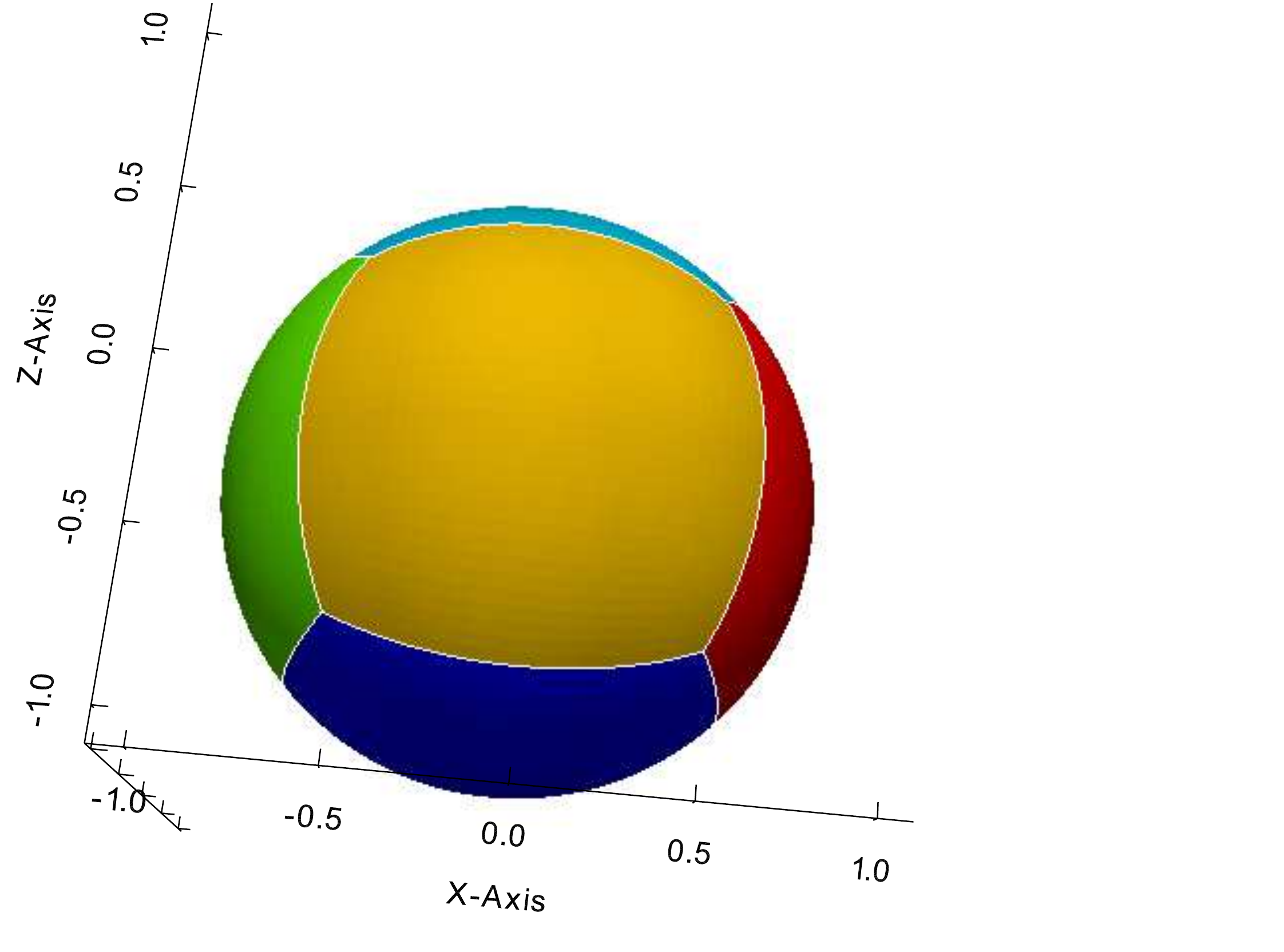}
      \includegraphics[width=0.48\textwidth]{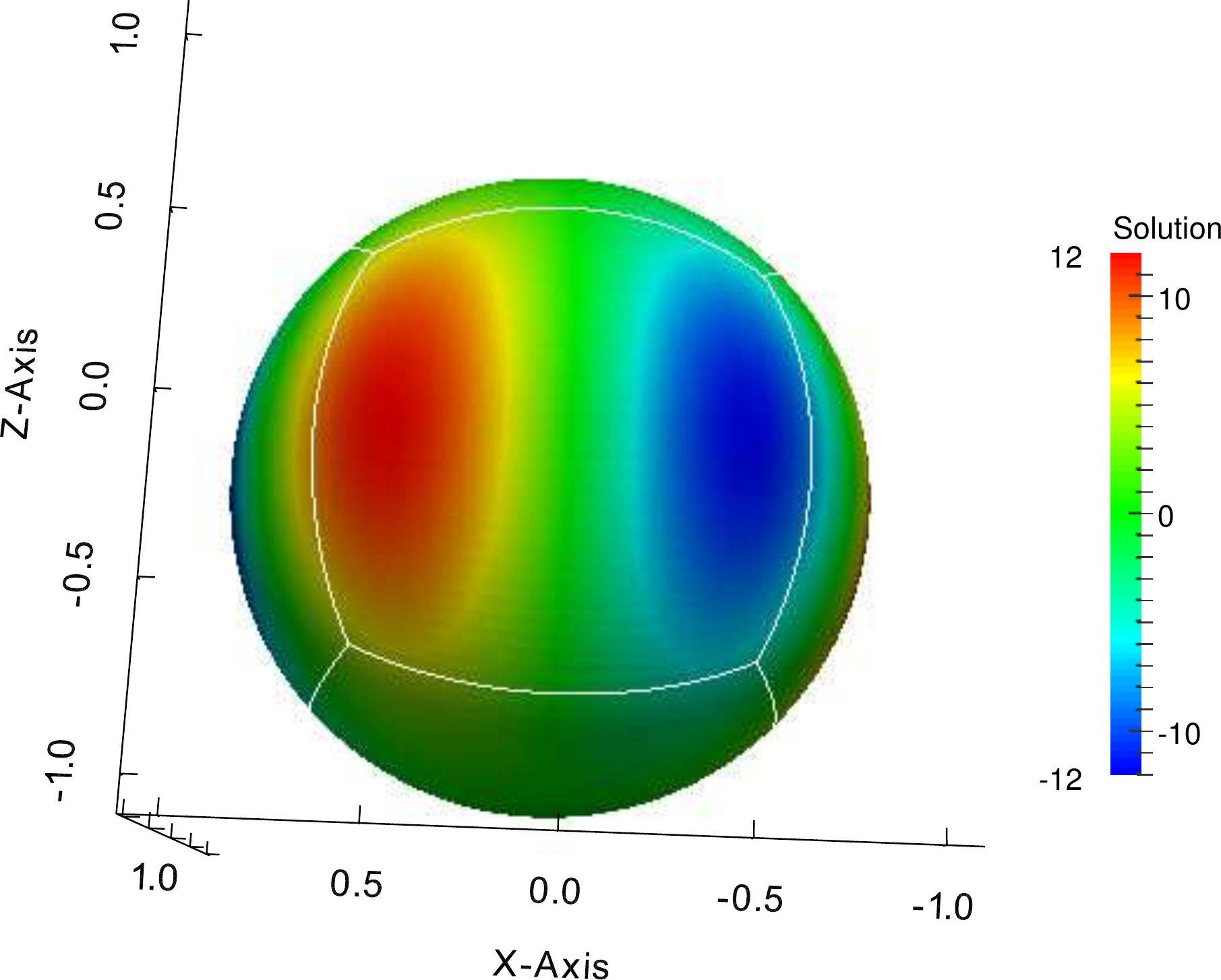}
        \caption{Sphere: Geometry and  decomposition into 6 patches (left) and 
		  Solution for the Laplace-Beltrami problem (right).
	  }
    \label{LMMT_sec:3:fig:sphere}
  \end{center}
\end{figure} 
The diagrams displayed in Fig.~\ref{LMMT_sec:3:fig:sphere:errors}
show the error decay with respect to the $L_2$ (left) and dG (right) norms 
for polynomial degrees $k = 1, 2, 3,$ and $4$. 
As expected by our theoretical results, we observe the full convergence rates 
$\mathcal{O}(h^{k+1})$ and $\mathcal{O}(h^{k})$ for the $L_2$ norm 
and the dG norm, respectively. 
\begin{figure}[th!]
  \centering
  \includegraphics[width=0.5\textwidth, height = 0.20\textheight]{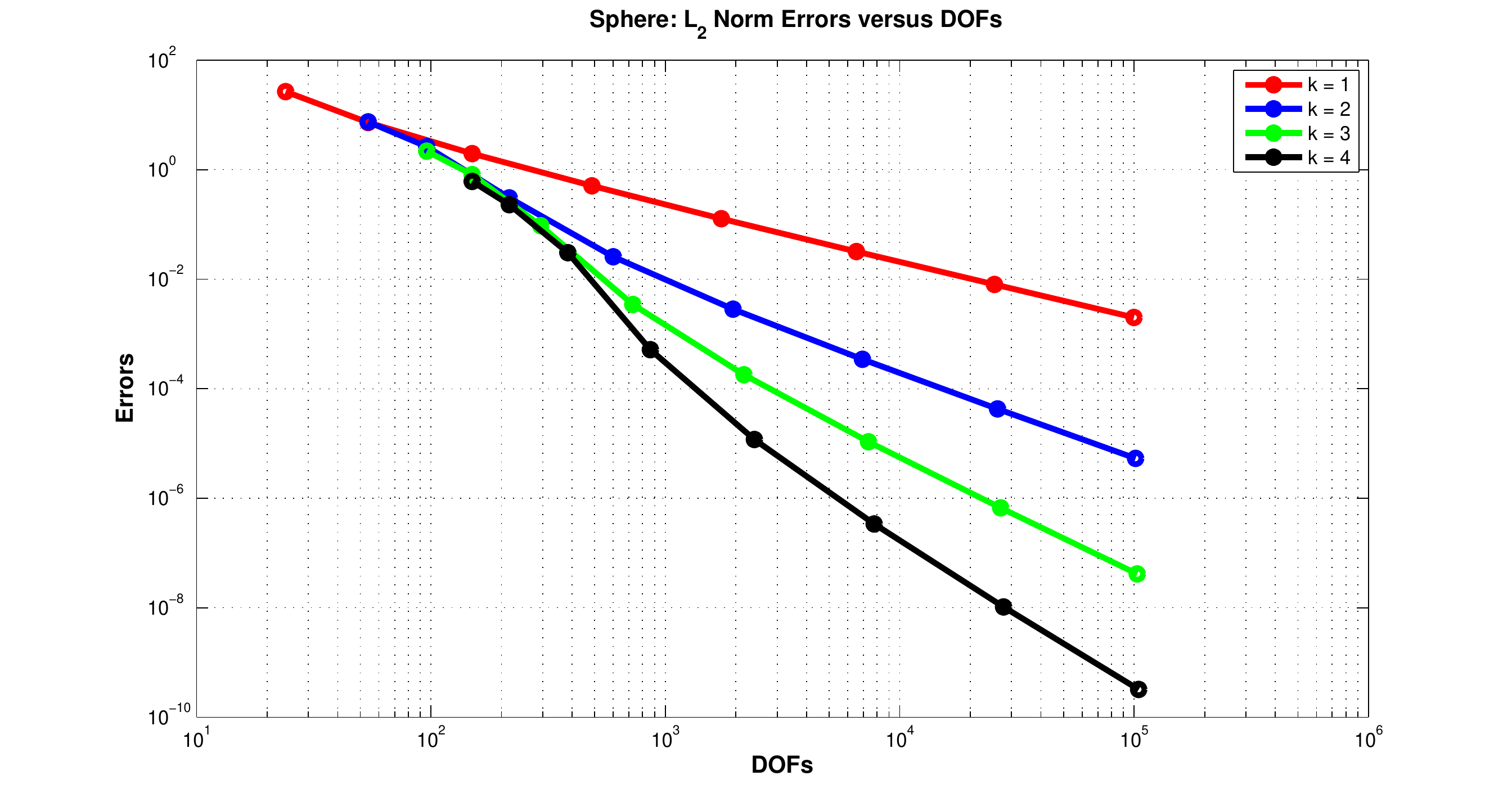}
  \includegraphics[width=0.5\textwidth, height = 0.20\textheight]{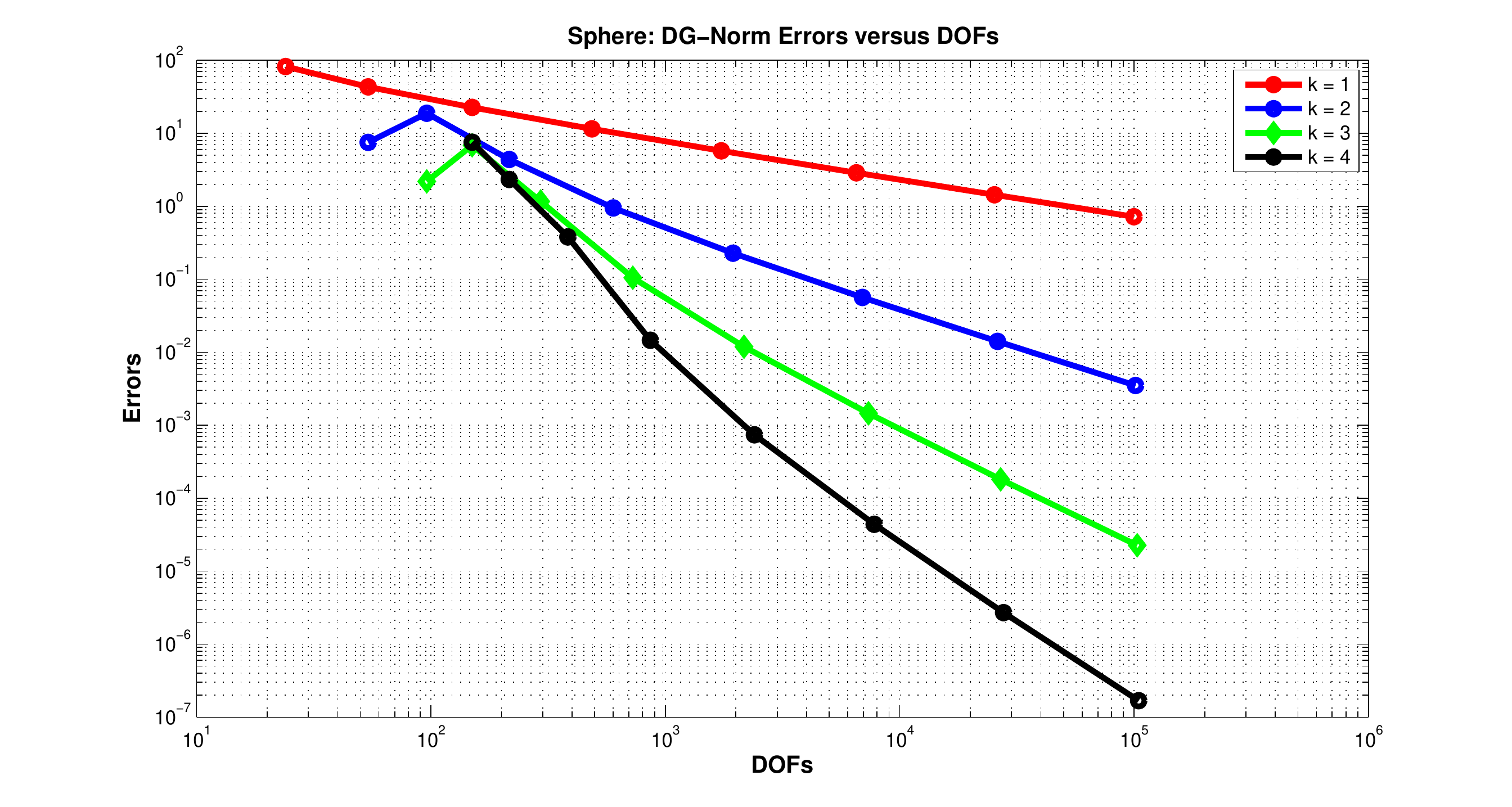}
  \caption{Sphere: Error decay in the $L_2$ (left) and dG (right) norms for polynomial 
		  degrees 1 to 4. 
	  }
  \label{LMMT_sec:3:fig:sphere:errors}
\end{figure}
In Table~\ref{LMMT_sec:3:tab:sphere:cGdGL2}, we compare the $L_2$ errors of dG IgA solutions
with those produced by the corresponding continuous (standard) Galerkin (cG) IgA scheme 
for the polynomial degree $k=5$. In the case of smooth solutions, 
the dG IgA is as good as the cG counterpart. The same is true for the errors with respect 
to the dG norm.

\begin{table}[th!]
 \centering
 \caption{Sphere: Comparison of the cG and dG IgA error decay in the $L_2$ norm for $k=5$.}
  \begin{tabular}{|l|l|l|l|l|l|l|l|}
   \hline
 $k=5$   &\multicolumn{2}{|c|}{cG-IgA} &\multicolumn{2}{|c|}{dG-IgA} \\
     \cline{1-5}   
       Dofs    &$L_2$ error     &conv. rate &$L_2$ error     &conv. rate    \\  \hline
        216    &0.168803       &0           &0.166582        &0		\\
        294    &0.0602254      &1.48689     &0.0599854       &1.16908		\\
        486    &0.00900833     &2.74104     &0.00898498      &2.55496		\\
       1014    &8.90909$\times 10^{-5}$     &6.65984      &8.90774$\times 10^{-5}$   &6.537		\\
       2646    &8.64021$\times 10^{-7}$     &6.68807      &8.63906$\times 10^{-7}$   &6.85905		\\
       8214    &1.16592$\times 10^{-8}$     &6.21152      &1.16582$\times 10^{-8}$   &6.27626		\\
      28566    &1.75119$\times 10^{-10}$    &6.05699      &1.75127$\times 10^{-10}$ &6.06894		\\
 \hline
 \end{tabular}
\label{LMMT_sec:3:tab:sphere:cGdGL2}
\end{table}



\subsubsection{Torus}
\label{LMMT_subsubsec:3.4.2:Torus}


We now consider the closed surface 
$$\Omega = \{(x,y) \in (-3,3)^2, z \in (-1,1): \; r^2 = z^2 + (\sqrt{x^2+y^2} -R^2) \}$$ 
that is nothing but a torus that is decomposed into 4 patches, 
see Fig.~\ref{LMMT_sec:3:fig:torus} left.
The knot vectors
$\Xi_{1}  = \{0, 0, 0, 0.25, 0.25, 0.50, 0.50, 0.75, 0.75, 1, 1, 1\}$
and
$\Xi_{2}= \{0, 0, 0, 1, 1, 1\}$
describe the NURBS used for the geometrical representation of the patches.
We first consider  the surface Poisson equation,
also called  Laplace-Beltrami equation,
with the right-hand side 
\begin{align*}
 f(\phi, \theta) &= r^{-2} \left[9\sin(3\phi)\cos(3\theta+\phi)\right] \\
&- \left[(R + r\cos(\theta))^{-2} (-10\sin(3\phi)\cos(3\theta+\phi) - 6\cos(3\phi)\sin(3\theta+\phi)) \right]\\
&- \left[ (r(R+r\cos(\theta))^{-1}) (3\sin(\theta)\sin(3\phi)\sin(3\theta+\phi)) \right],
\end{align*}
where $\phi = \arctan (y/x)$, 
$\theta = \arctan (z/(\sqrt{x^2+y^2}-R))$ , $R = 2$,
and $r=1$.
The exact solution is given by $u= \sin(3 \phi)\cos(3\theta + \phi)$,
cf. also \cite{LMMT_GrossReusken:2011a}.
We mention that the functions $u$ and $f$ are chosen  such that 
the zero mean compatibility condition holds.
The IgA approximation to this solution is depicted in middle picture
of Fig.~\ref{LMMT_sec:3:fig:torus}.
\begin{figure}[th!]
  \centering
  \includegraphics[width=0.33\textwidth]{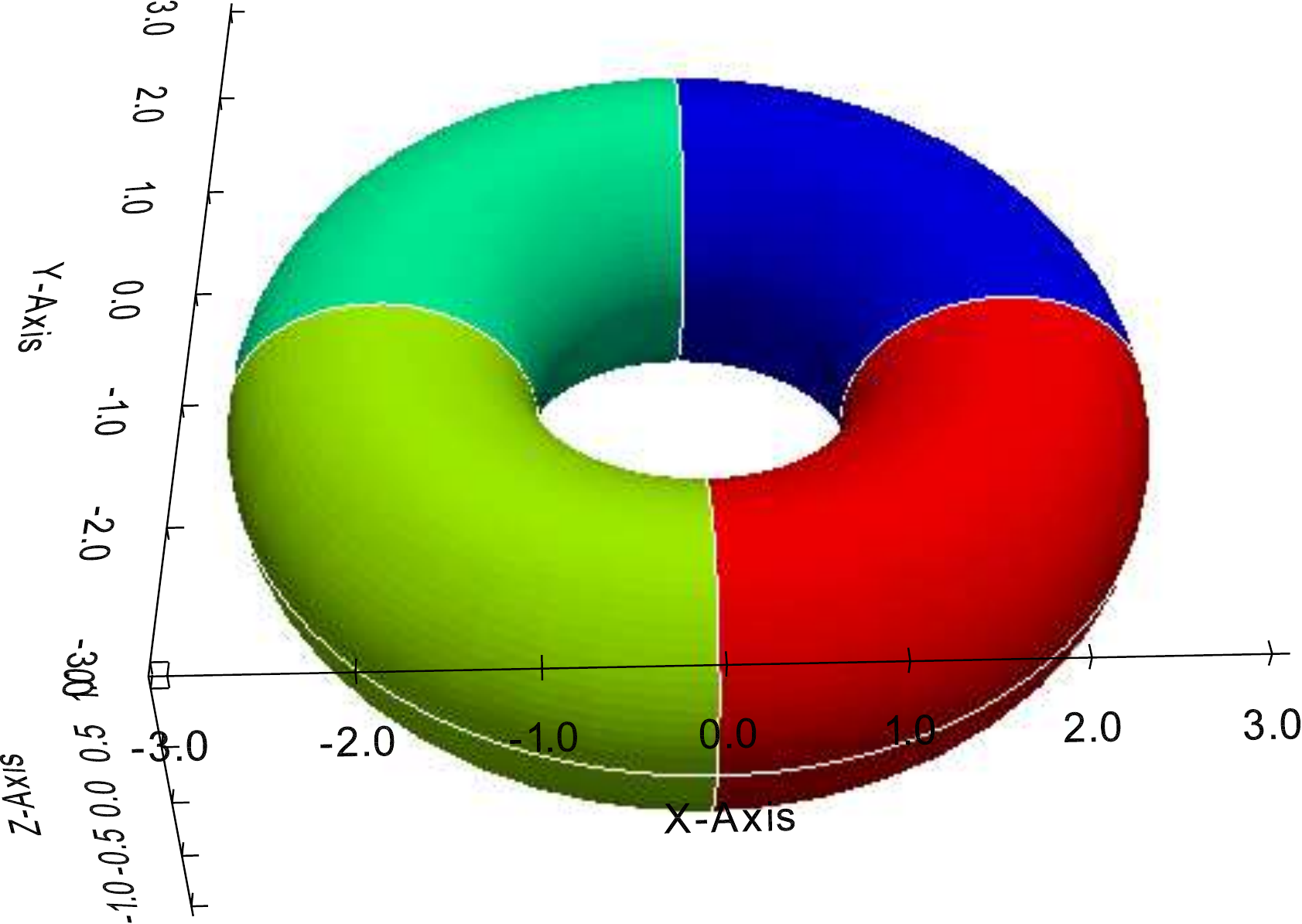}
  \includegraphics[width=0.33\textwidth]{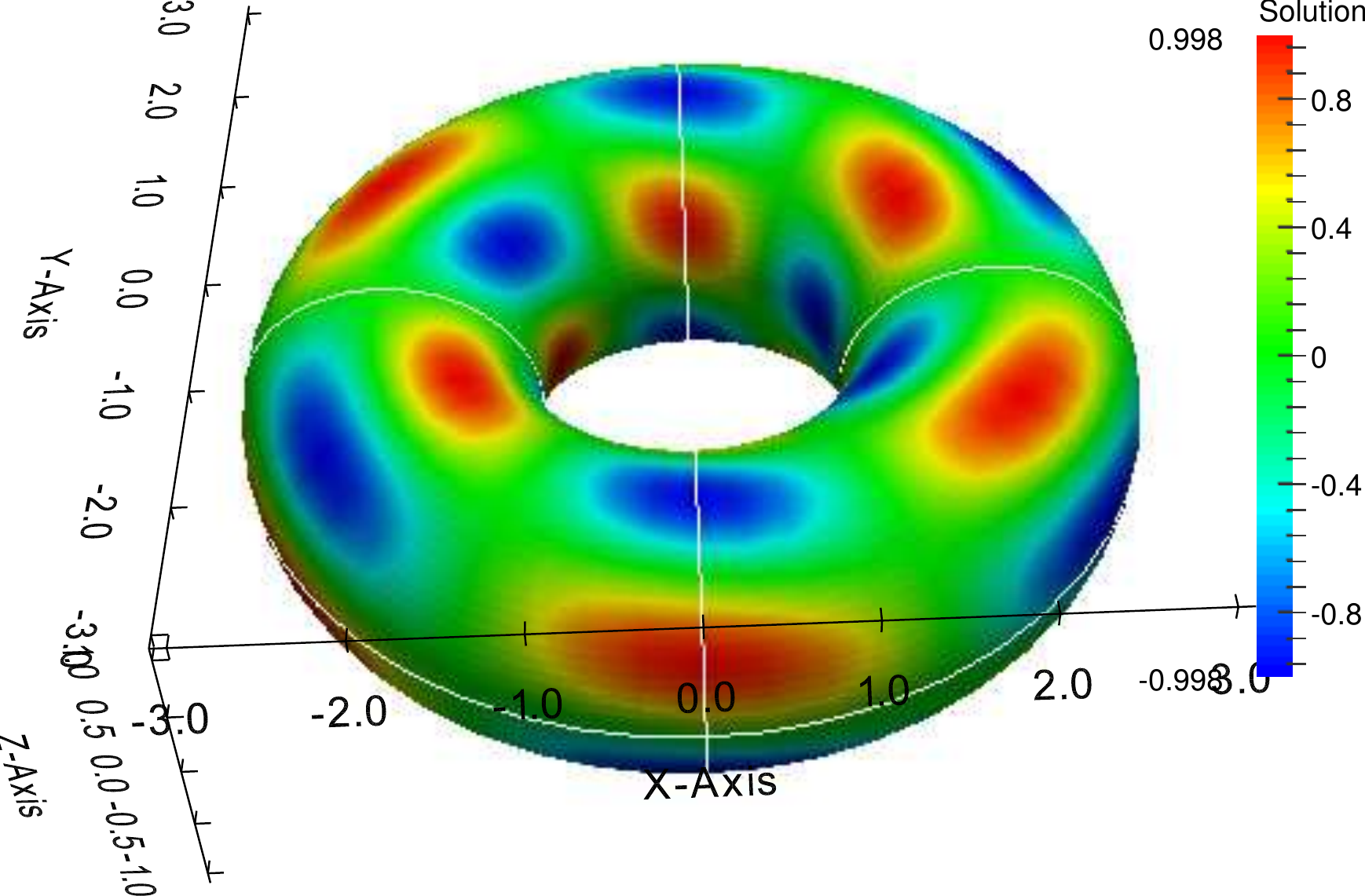}
  \includegraphics[width=0.31\textwidth]{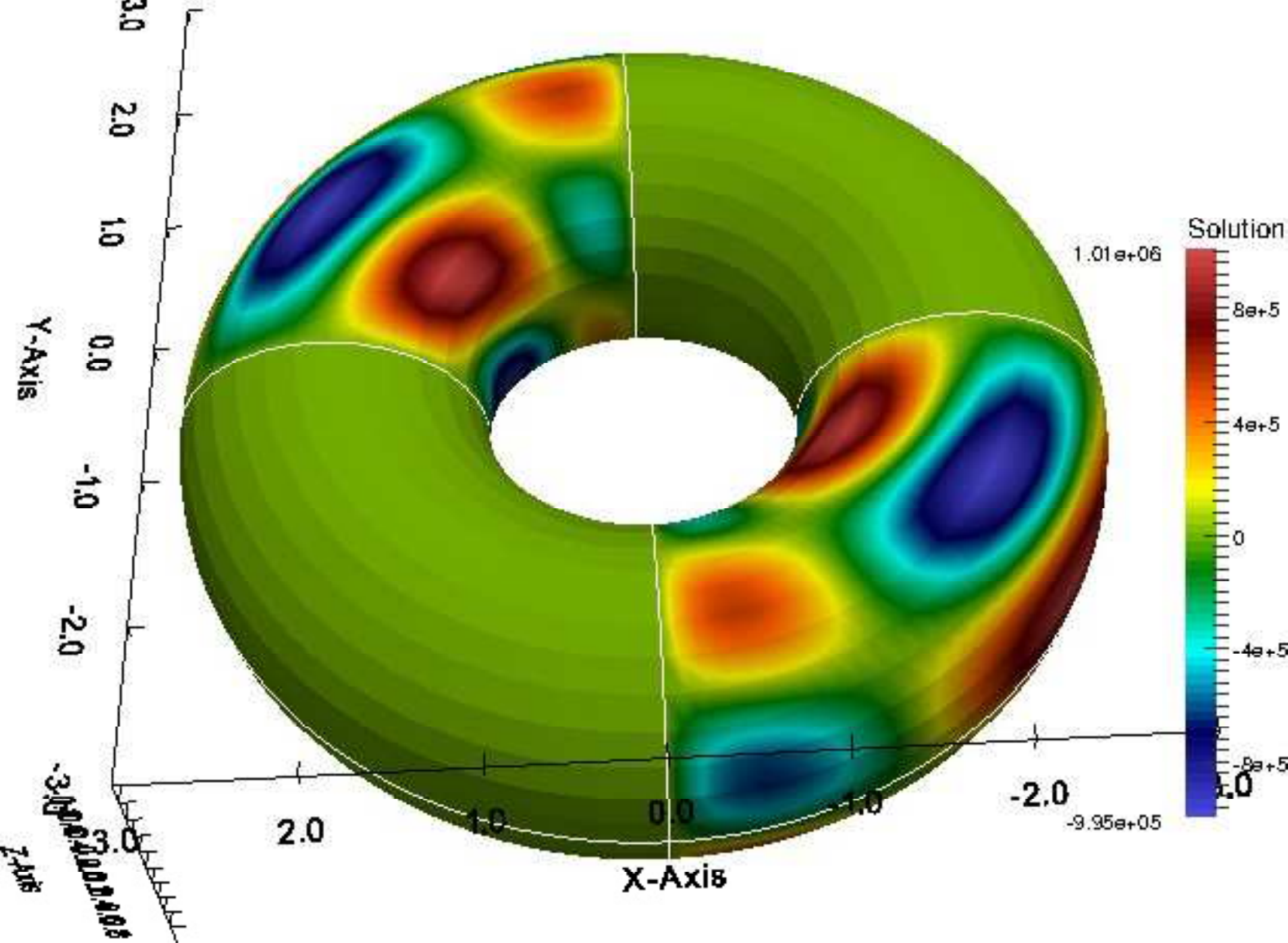}
  \caption{Torus: Geometry and  decomposition into 4 patches (left),
		  Solution for the Laplace-Beltrami problem  (middle), 
		  Solution for jumping coefficents (right).
	  }
  \label{LMMT_sec:3:fig:torus}
\end{figure}
The diagrams displayed in Fig.~\ref{LMMT_sec:3:fig:torus:errors}
show the error decay with respect to the $L_2$ (left) and dG (right) norms 
for polynomial degrees $k = 1, 2, 3,$ and $4$. 
As expected by our theoretical results, we observe the full convergence rates 
$\mathcal{O}(h^{k+1})$ and $\mathcal{O}(h^{k})$ for the $L_2$ norm 
and the dG norm, respectively. 
\begin{figure}[th!]
  \centering
  \includegraphics[width=0.5\textwidth, height = 0.20\textheight]{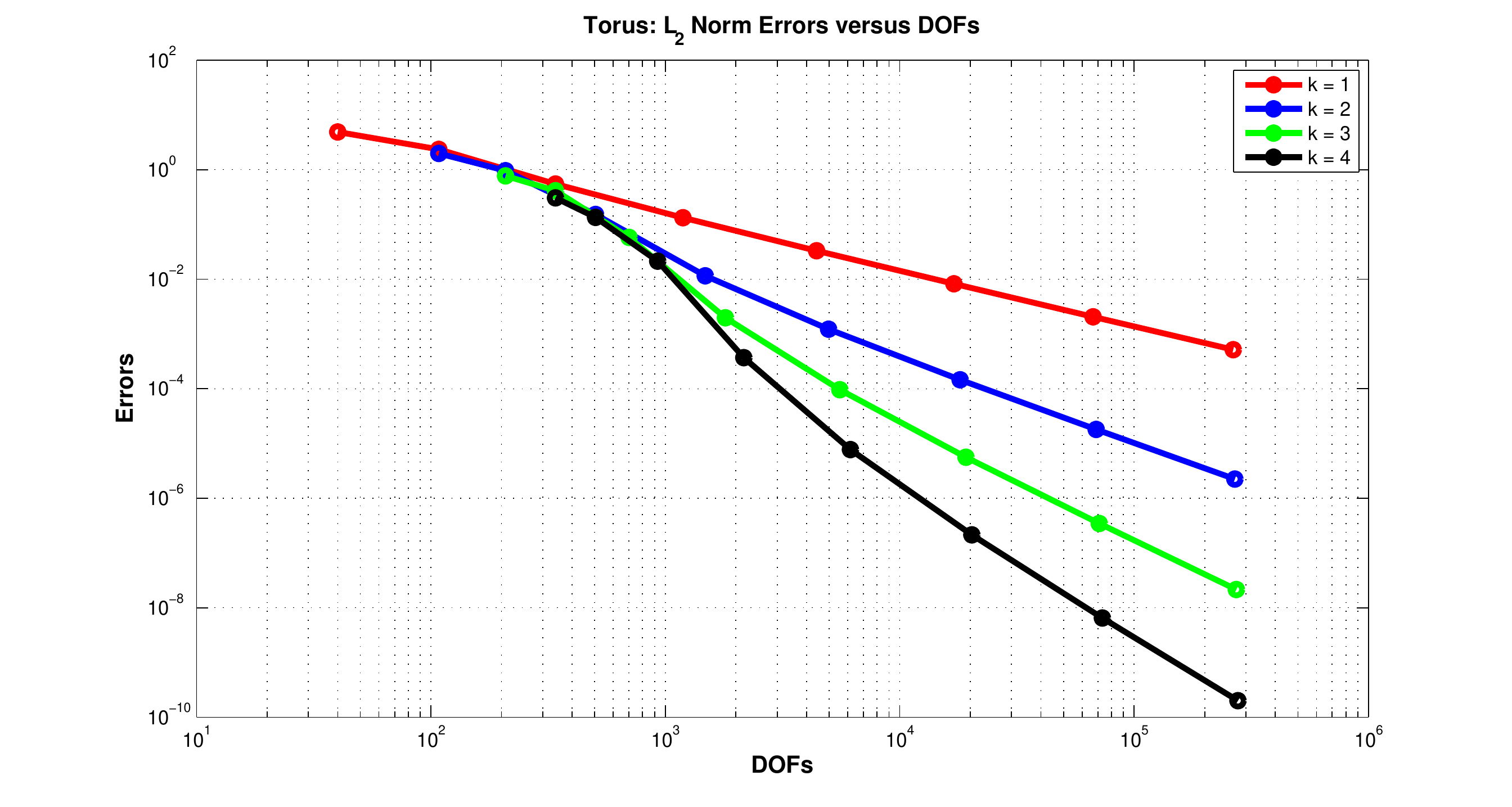}
  \includegraphics[width=0.5\textwidth, height = 0.20\textheight]{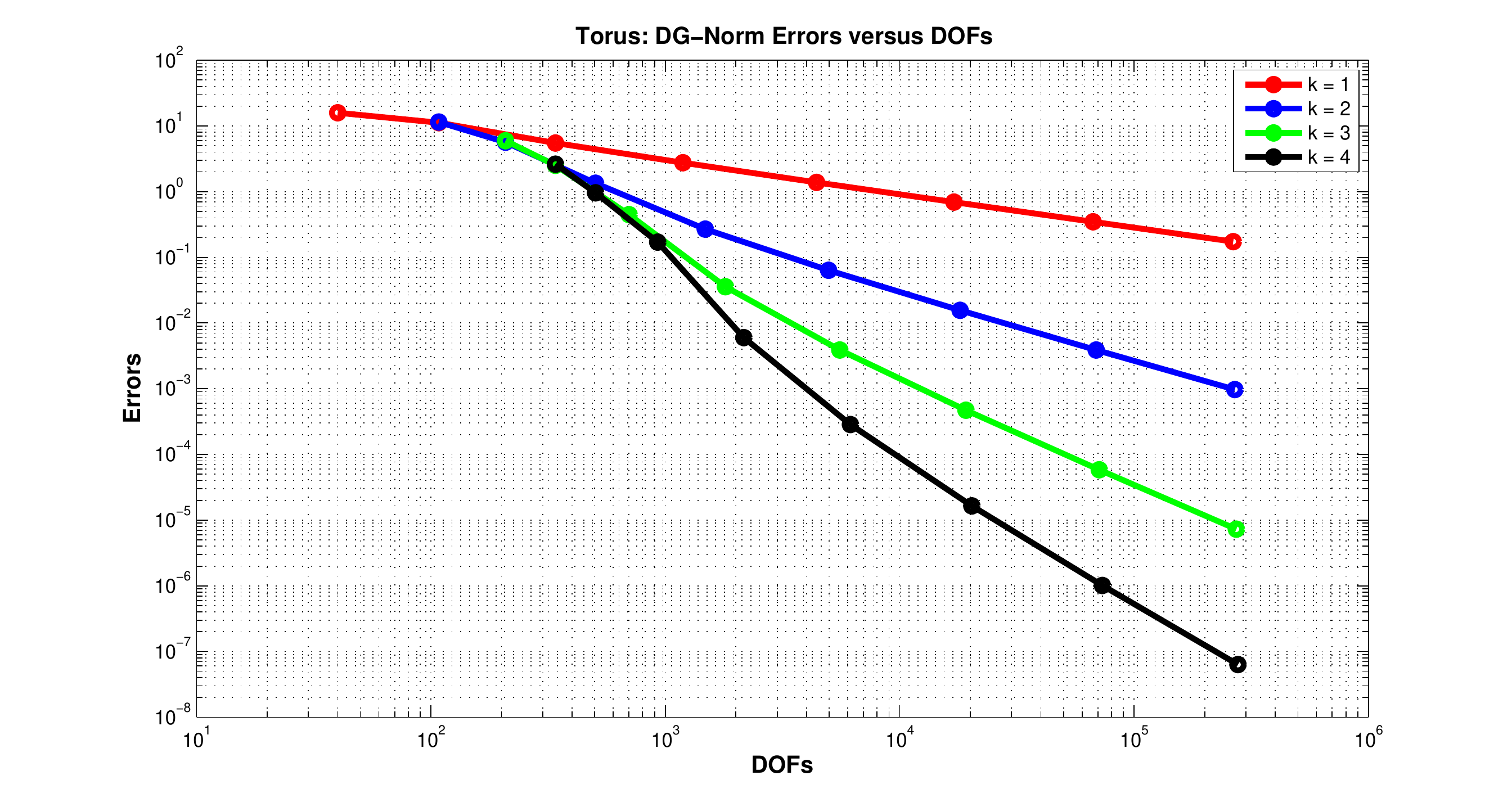}
  \caption{Torus: Error decay in the $L_2$ (left) and dG (right) norms for polynomial 
		  degrees 1 to 4.}
  \label{LMMT_sec:3:fig:torus:errors}
\end{figure}
In Table~\ref{LMMT_sec:3:tab:torus:cGdGL2}, we compare the $L_2$ errors of dG IgA solutions
with those produced by the corresponding 
cG IgA scheme 
for the polynomial degree $k=5$. In the case of smooth solutions, 
the dG IgA is as good as the cG counterpart. The same is true for the errors with respect 
to the dG norm.

%

\begin{table}[th!]
 \centering
 \caption{Torus: Comparison of the cG and dG IgA error decay in the $L_2$ norm for $k=5$.}
  \begin{tabular}{|l|l|l|l|l|l|l|l|}
   \hline
 $k=5$   &\multicolumn{2}{|c|}{cG-IgA} &\multicolumn{2}{|c|}{dG-IgA} \\
     \cline{1-5}   
       Dofs    &$L_2$ error                 &conv. rate   &$L_2$ error                 &conv. rate    \\  \hline
        504    &0.0974255                   &0            &0.0973029                   &0		\\
        700    &0.0433491                   &1.1683       &0.043271                    &1.16908		\\
       1188    &0.00736935                  &2.55639      &0.00736339                  &2.55496		\\
       2548    &7.9296$\times 10^{-5}$      &6.53814      &7.92948$\times 10^{-5}$     &6.537		\\
       6804    &6.83083$\times 10^{-7}$     &6.85904      &6.83068$\times 10^{-7}$     &6.85905		\\
      21460    &8.8131$\times 10^{-9}$      &6.27627      &8.81294$\times 10^{-9}$     &6.27626		\\
      75348    &1.3131$\times 10^{-10}$     &6.0686       &1.31276$\times 10^{-10}$    &6.06894		\\
 \hline
 \end{tabular} 
\label{LMMT_sec:3:tab:torus:cGdGL2}
\end{table}

Now we consider the case when the diffusion coefficients $\alpha$ have large jumps 
across the boundaries of the patches in which the torus was decomposed.
More precisely, we assume that the diffusion coefficient $\alpha = \alpha^{(i)} > 0$ 
in the patch $\Omega_i$, $i=1,2,3,4$, where
$\alpha^{(2)} = \alpha^{(4)} = 1$ and $\alpha^{(1)} = \alpha^{(3)} =10^{-6}$.
The patches are arranged from blue $\Omega_{1}$ to red $\Omega_{4}.$
The right-hand side $f$ is the same as given above for the 
case of the Laplace-Beltrami problem, i.e. for $\alpha = 1.$
Now the solution is not known, but we know that the solution is smooth in the patches 
$\overline{\Omega_i}$
and has steep gradients towards $\partial \Omega_2$ and $\partial \Omega_4$,
see also of Fig.~\ref{LMMT_sec:3:fig:torus} right.
Due to our theory, we can expect full convergence rates
since we have an exact representation of the geometry.
Indeed, we realize the full convergence rate by choosing  a fine grid as the reference solution and 
comparing the solutions at each refinement step against this reference solution
as shown in Table~\ref{LMMT_sec:3:tab:torus:HeteroDeg2}.

\begin{table}[ht!]
 \centering 
 \caption{Torus: $L_{2}$ and energy norm errors with degree $k = 2.$}
  \begin{tabular}{|l|l|l|l|l|l|}
   \hline
     \cline{2-5}   
     Dofs     &$L_{2}$ error              &conv. rate  &dG error                 &conv. rate     \\  \hline
        108   &2.04841$\times 10^{6}$    &0            &8.56088$\times 10^{6}$   &0			\\
        208   &1.03579$\times 10^{6}$    &0.98377      &4.89856$\times 10^{6}$   &0.805403		\\
        504   &128215                    &3.0141       &1.09244$\times 10^{6}$   &2.1648		\\
       1480   &15431.9                   &3.05458      &258162                   &2.08121		\\
       4968   &1808.17                   &3.09331      &64084.3                  &2.01023		\\
      18088   &213.252                   &3.0839       &17361.8                  &1.88406		\\
     268840   &reference 	         &solution     &reference                &solution	\\
       \hline
   \end{tabular} 
 \label{LMMT_sec:3:tab:torus:HeteroDeg2}
\end{table}


\subsubsection{Car}
\label{LMMT_subsubsec:3.4.3:Car}


We now consider the diffusion problem on an open, free-form surface. In
order to demonstrate the fact that our results are general and not
limited to academic examples, we apply our methods to a CAD model
representing a car shell.

The surface is composed of eight quadratic B-spline patches, 
shown in Fig.~\ref{LMMT_sec:3:fig:car} (left). The model exhibits several
small bumps in the interior of the surface and sharp corners on the
boundary. In addition, the patches have varying areas and meet along
curved one-dimensional (quadratic) B-spline interfaces.

We choose a constant diffusion coefficient on the whole domain and we
prescribe homogeneous Dirichlet conditions along the boundary. For the
right-hand side, we used a (globally defined in $\mathbb R^3$) linear 
function which we restricted on the surface, in order to obtain a 
smooth solution to our problem.

As suggested by the isogeometric paradigm, we used quadratic B-spline
basis functions, forwarded on the surface, as discretization basis.
We started with a coarse grid, which was $h-$refined uniformly several
times, in both parametric directions.

An analytic formula for the exact solution is not available. 
Therefore,
we have chosen a fine grid of approximately one million degrees of
freedom as the re\-fe\-ren\-ce solution.  Comparing against this
solution (obtained by the corresponding cG IgA method), the expected
convergence rates have been observed. Table~\ref{LMMT_sec:3:tab:car}
contains the numeric results for the $L^2$ norm, obtained using either
the continuous (with strong or weak imposition of Dirichlet
boundaries) or dG IgA method. Apart from
the observed order of convergence, the magnitude of the error agrees in
all cases as well.

\begin{figure}[th!]
  \centering
\includegraphics[width=0.5\textwidth]{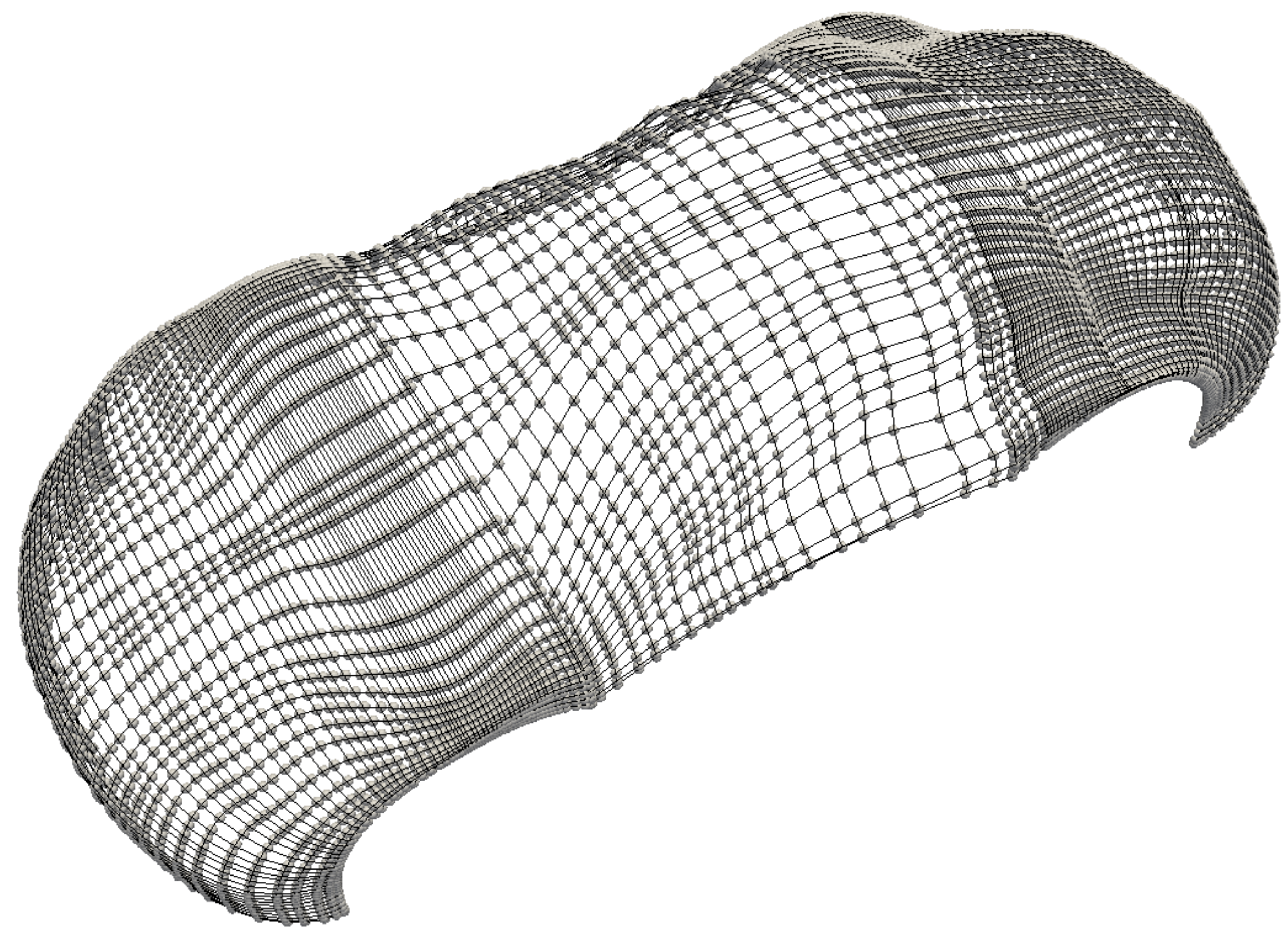}
  \includegraphics[width=0.5\textwidth]{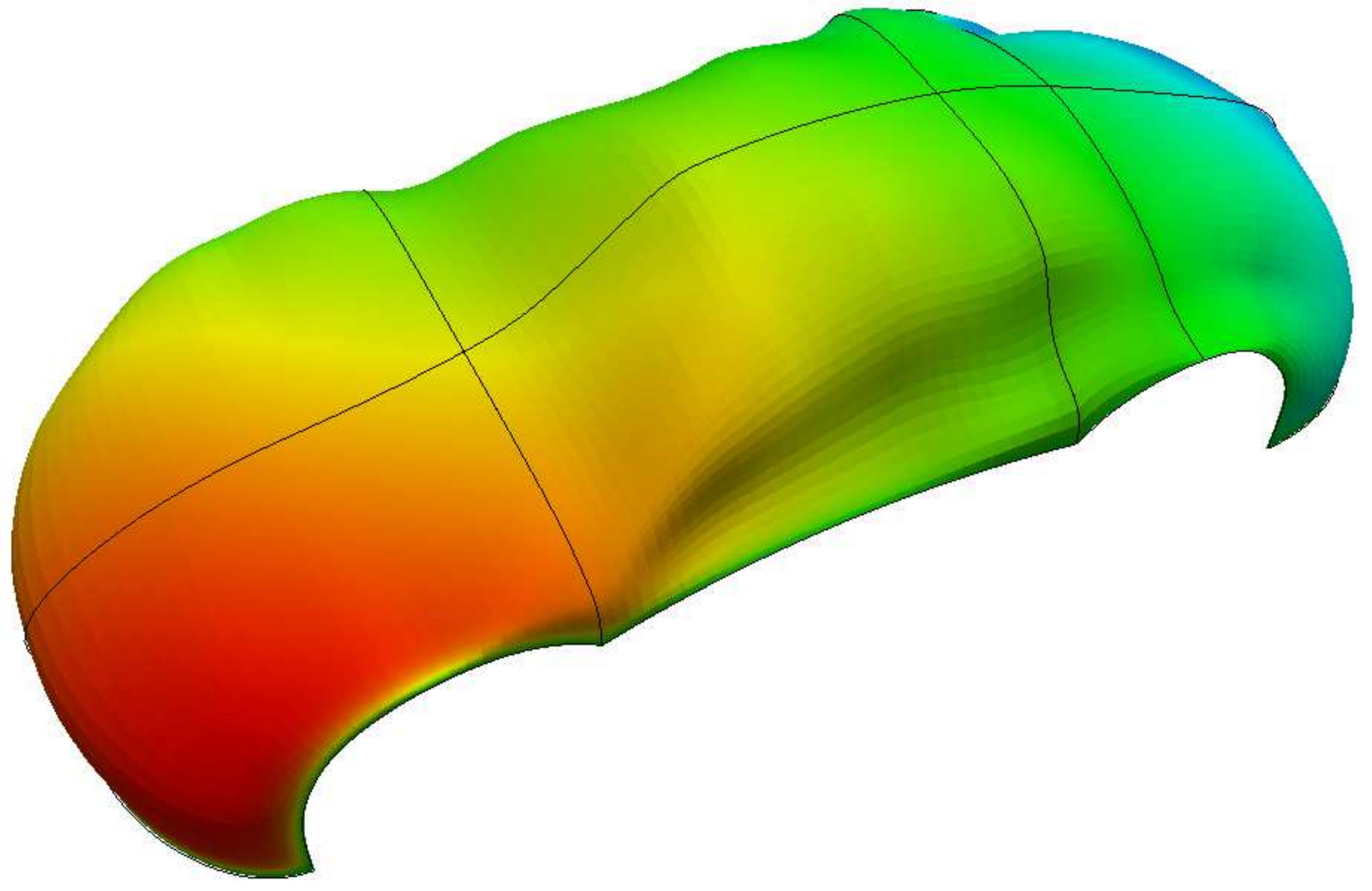}
  \caption{Car: Control points of the geometry (left),
                IgA solution of the Dirichlet surface diffusion  problem (right). Black lines indicate patch interfaces.
	  }
  \label{LMMT_sec:3:fig:car}
\end{figure}
%
\begin{table}[th!]
 \centering
  \begin{tabular}{|l||l|l||l|l||l|l|}\hline
  $k=2$ & 
  \multicolumn{2}{|c|}{cG IgA} & 
  \multicolumn{2}{|c|}{Nitsche-type BCs}&
  \multicolumn{2}{|c|}{dG IgA}
\\ \hline
 DoFs      &$L_2$ error & rate & $L_2$ error & rate & $L_2$ error & rate 	\\ \hline
         96&     1.87598&            &     1.77144&            &     1.77029&           \\
        192&     1.23006&    0.608922&     1.19201&    0.571527&     1.19188&    0.570749\\
        480&    0.633648&    0.956971&    0.623767&    0.934312&    0.623764&    0.934161\\
       1440&    0.241842&     1.38962&    0.239835&     1.37897&    0.239835&     1.37896\\
       4896&   0.0659275&     1.87511&   0.0655902&     1.87049&   0.0655902&     1.87049\\
      17952&   0.0115491&      2.5131&    0.011502&      2.5116&    0.011502&      2.5116\\
      68640&  0.00155178&     2.89578&  0.00154625&     2.89504&  0.00154624&     2.89504\\
 $\sim$ 268K &  0.00017567&   3.14298& 0.000175096&     3.14256& 0.000175094&     3.14257\\
 $\sim$ 1M   & reference & reference &  reference &  reference &  reference &     reference  \\
     \hline
\end{tabular}
\caption{Diffusion on a car-shell model. Numerical results for continuous Galerkin IgA with strong 
imposition of Dirichlet boundary (cG IgA), weak imposition (Nitsche-type) and finally 
patch-discontinuous Galerkin (dG IgA).}
\label{LMMT_sec:3:tab:car}
\end{table}
%



\section{The \gismo C++ library}
\label{LMMT_sec:5:G+SMO}

Isogeometric analysis requires seamless integration of Finite Element
Analysis (FEA) and Computer-aided design (CAD) software. The existing
software libraries, however, cannot be adapted easily to the rising
new challenges since they have been designed and developed for
different purposes. In particular, FEA codes are traditionally
implemented by means of functional programming, and are focused on
treating nodal shape function spaces. In CAD packages, on the other
hand, the central objects are free-form curves and surfaces, defined
by control points, which are realized in an object-oriented
programming environment.

\gismo is an object-oriented, template C++ library, that implements a
generic concept for IGA, based on abstract classes for geometry map,
discretization basis, assemblers, solvers and so on. It makes use of
object polymorphism and inheritance techniques in order to support a
variety of different discretization bases, namely B-spline, Bernstein,
NURBS bases, hierarchical and truncated hierarchical B-spline bases of
arbitrary polynomial order, and so on.

Our design allows the treatment of geometric entities such as surfaces
or volumes through dimension independent code, realized by means of
template meta-programming. Available features include simulations
using continuous and discontinuous Galerkin approximation of PDEs,
over conforming and non-conforming multi-patch computational
domains. PDEs on surfaces as well as integral equations arising from
elliptic boundary value problems. Boundary conditions may be imposed
both strongly and weakly. In addition to advanced discretization and
generation techniques, efficient solvers like multigrid iteration
schemes are available. Methods for solving non-linear problems are
under development. Finally, we aim to employ existing high-end
libraries for large-scale parallelization.

In the following paragraphs we shall provide more details on the
design and features of the library.

\subsection{Description of the main modules}

The library is partitioned into smaller entities, called modules. 
Currently, there are six (6) modules in \gismo namely
Core, Matrix, NURBS, Modeling, Input/Output and Solver modules.

The \textbf{Core module} is the backbone of the library. Here an abstract
interface is defined for a \emph{basis}, that is, a set of real-valued
functions living on a parameter domain. At this level, we do not
specify how these functions (or its derivatives) should be evaluated.
However, a number of virtual member functions define an interface that
should be implemented by derived classes of this type.  Another
abstract class is the \emph{geometry} class. This object consists of
a (still abstract) basis and a coefficient vector, and represents a
patch.  Note that parameter or physical dimension are not specified at
this point.  There are four classes  directly derived from the
geometry class; these are curve, surface, volume and bulk. These are
parametric objects with known parameter space dimension $1,\,2,\,3$
and $4$ respectively.

Another abstract class is a \emph{function} class. The interface for
this class includes evaluation, derivations and other related
operations. The \emph{geometry} abstract class is actually deriving
from the function class, demonstrating the fact that parametric
geometries can be simply viewed as (vector) functions. Another
interesting object is the \emph{multipatch} object.  CAD models are
composed of many patches. Therefore, a multipatch structure is of great
importance.  It contains two types of information; first, geometric
information, essentially a list of geometry patches. Second,
topological information between the patches, that is, the adjacency
graph between patch boundaries, degenerate points, and so on. Let us
also mention the \emph{field} class, which is the object that
typically represents the solution of a PDE. A \emph{field} is a
mathematical scalar or vector field which is defined on a parametric
patch, or multipatch object. It may be evaluated either on the
parameter or physical space, as the isogeometric paradigm suggests.

The \textbf{Matrix module} contains all the linear algebra related
infrastructure.  It is based on the third party library Eigen.  The
main objects are dense and sparse matrices and vectors.  
Typical matrix decompositions such as LU, QR, SVD, and so
on, are available.  Furthermore, the user has also access 
to iterative solvers like conjugate gradient methods 
with different preconditioners. Finally, one can use popular
high-end linear solver packages like PARADISO and SuperLU 
through a common interface.

The \textbf{NURBS module} consists of  
B-Splines, NURBS, B\'ezier of arbitrary degrees and knot-vectors,
tensor-product B-splines of arbitrary spatial dimension.

The \textbf{Modeling module} provides data structures and geometric
operations that are needed in order to prepare CAD data for analysis.
It contains trimmed surfaces, boundary represented (B-rep) solids and
triangle meshes. Regarding modeling 
operations, B-Spline fitting, smoothing of point clouds, 
Coon's patches, and volume segmentation methods 
are available.

The \textbf{Input/Output module} is responsible for visualization as
well as file reading and writing. For visualization, we employ Paraview
or Axel (based on VTK).  
An important issue is file formats.
Even if NURBS is an industrial standard, a variety of different
formats are used in the CAD industry to exchange NURBS data.
In \gismo, we have established I/O with popular CAD formats, which
include the 3DM file format of Rhinoceros 3D modeler, the \verb+X_T+
format of Siemens' NX platform as well as an (exported) format used by
the LS-DYNA general-purpose finite element program.

The \textbf{Assembler module} can already treat a number of PDE problems like 
convection-diffusion problems, linear elasticity, Stokes equations
as well as diffusion problems on surfaces 
by means of continuous or discontinuous Galerkin
methods, including 
divergence preserving discretizations for Stokes equations.
Strong or weak imposition Dirichlet boundary conditions 
and Neumann-type conditions are provided.
Boundary element IgA collocation techniques are also available.
%

Apart from the modules described above, there are several more which
are under development. These include a hierarchical bases module, an
optimization module and a triangular B\'ezier module.

\begin{figure}[t]
  \centering
\includegraphics[width=\textwidth]{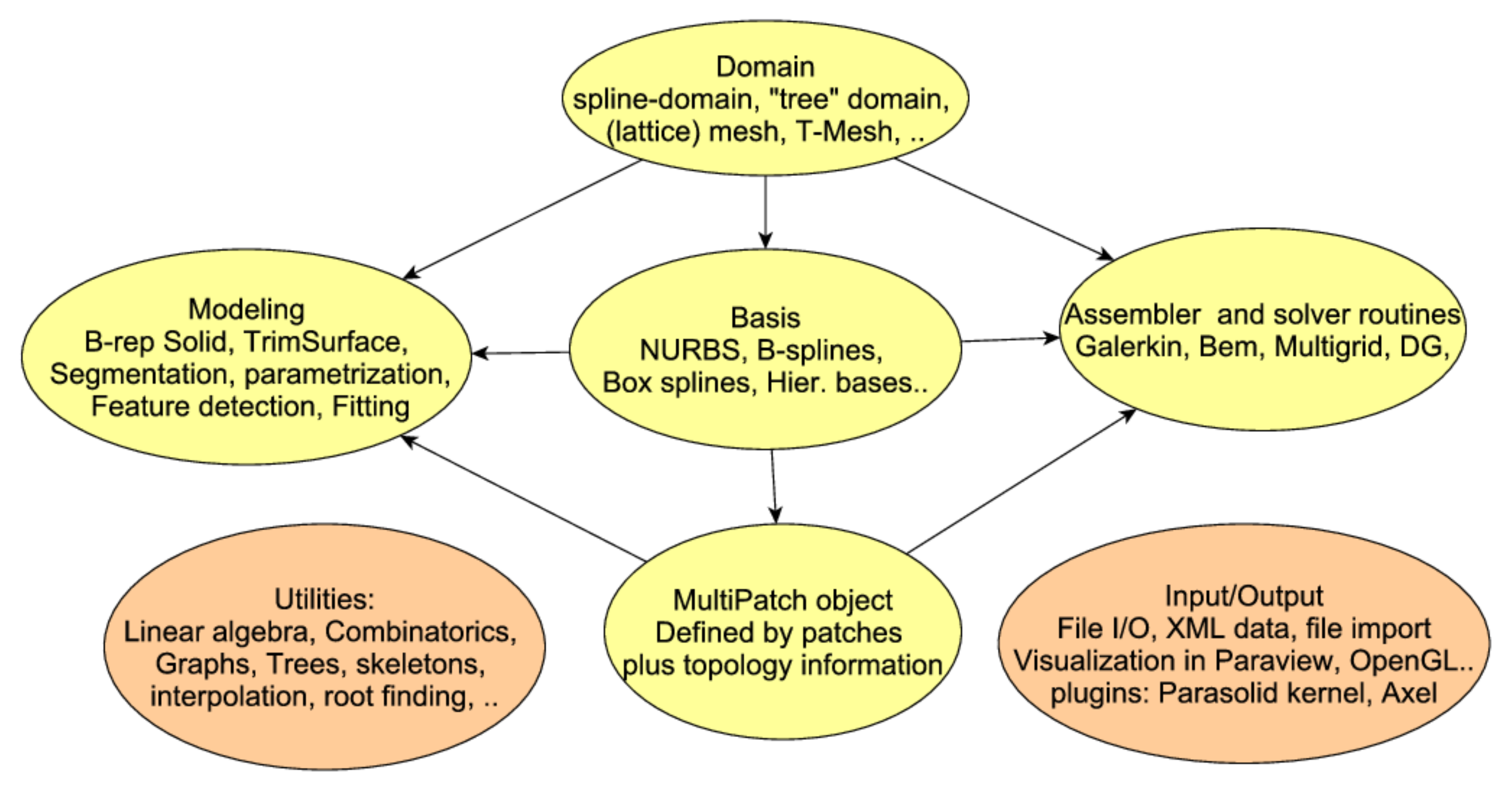}
\caption{A diagram of the different parts in the library and their interactions}
  \label{fig:gismo:general}
\end{figure}

\subsection{Development framework}

The library uses a set of standard C++ tools in order to achieve cross-platform
functionality. Some compilers that have been tested includes Microsoft's Visual C++,
Mingw32, GCC, Clang and the Intel compiler.

The main development tools used are:
    \begin{itemize}
    \item {CMake} cross-platform build system for configuration of the
      code. A small set of options are available in order to enable
      optional parts of the library that require external
      packages. Furthermore, the user is able to enable debug mode build,
      that disables code optimization and facilitates development, by
      attaching debug information in the code.

    \item The code is available in an {SVN} repository. We have chosen
      the continuous integration as development policy. That is, all
      developers commit their contributions in a mainline repository.

    \item {Trac} software management system is used for bug
      reporting. Additionally, we use the integrated wiki for
      documentation and user guidance.

    \item A {CDASH} software testing server is employed for regular
      compilation and testing of the mainline code. Nightly builds are
      executed on different platforms, as well as continuous builds
      after user commits. This allows to easily correct errors and
      ensure the quality of the library. Coverage
      analysis and memory checks are also performed regularly.

    \item The in-source documentation system Doxygen is used heavily
      in the code.

    \item A mailing list is available for communication and user support.

    \end{itemize}

    Regarding tools that are employed in the library:

   \begin{itemize}
   \item The C++ Standard Library is employed. This is available by
     default on C++ modern installations.

   \item The Eigen C++ Linear Algebra library is used for linear
     algebra operations.  This library features templated coefficient
     type, dense and sparse matrices and vectors, typical matrix
     decompositions (LU, QR, SVD, ..) as well as iterative solvers and
     wrappers for high-end packages (SuperLU, PaStiX, SparseSuite,..).

    \item {XML} reader/writer for 
      input/output of XML files, 
      and other CAD formats (OFF, STL, OBJ, GeoPDEs, 3DM, Parasolid). 

    \item Mathematical Expression Toolkit Library (ExprTk). This is an
      expression-tree evaluator for mathematical function expressions
      that allows input of functions similar to Matlab's interface.

    \end{itemize}

\subsection{Additional  features and extensions}

    Apart from the basic set of tools provided in the library, the
    user has the possibility to enable features that require
    third-party software. The following connections to external tools
    is provided:
   \begin{itemize}
    \item OpenNurbs library used to support Rhino's 3DM CAD file format.
      This allows file exchange with standard CAD software.

    \item Connection to {Parasolid} geometric kernel. Enabling this
      feature allows input and output of the \verb|x_t| file
      format. Also, the user can employ advanced modeling operations
      like intersection, trimming, boolean operations, and so on.

    \item Connection to {LS-DYNA}. The user can output simulation data
      that can be used by LS-DYNA's Generalized element module system,
      for performing for instance simulations on shells.

    \item Connection to {IPOPT} nonlinear optimization library. With
      this feature one can use powerful interior point constrained
      optimization algorithms in order to perform, for instance,
      isogeometric shape optimization.

    \item {MPFR} library for multi-precision floating point
      arithmetic. With this feature critical geometric operations can
      be performed with arbitrary precision, therefore guaranteeing a
      verified result.

    \item Graphical interface, interaction and display using {Axel modeler}.

    \end{itemize}

\subsection{Plugins for third-party platforms}

    Finally, there are currently two plugins under development. The
    plugins allow third-party software to employ and exchange data
    with \gismo.

   \begin{itemize}

   \item Axel Modeler is an open-source spline modeling package based
     on Qt and VTK. Our plugin allows to use \gismo within this
     graphical interface, and provides user interaction (e.g. control
     point editing) and display.

   \item A Matlab interface is under development. This allows to use
     operations available in \gismo  within Matlab.

    \end{itemize}


\noindent\textbf{Acknowledgement. }
This research was supported by the Research Network
``Geometry \textbf{+} Simulation'' (NFN S117), funded by
the Austrian Science Fund (FWF).

\bibliographystyle{abbrv}
\bibliography{sec_5_iga}
\end{document}